\documentclass[11pt,a4paper]{article}

\usepackage{latexsym}
\usepackage[T2A]{fontenc}
\usepackage[utf8x]{inputenc}
\usepackage[russian,english]{babel}

\usepackage{geometry}

\usepackage{graphicx}
\usepackage{longtable}
\usepackage{multirow}
\usepackage{amsfonts}
\usepackage{amsthm}
\usepackage{amsmath}
\usepackage{amssymb}
\usepackage{accents}
\usepackage{indentfirst}
\usepackage{enumerate}
\usepackage{float}
\usepackage{placeins}
\usepackage{mathtools}
\usepackage{csquotes}
\usepackage{setspace}
\usepackage{xifthen}
\usepackage{pdfpages}
\usepackage{transparent}
\usepackage{cite}
\usepackage{authblk}
\usepackage{arydshln}

\usepackage{rotating}
\usepackage{pdflscape}
\usepackage{multicol}

\def\A{\mathcal{A}}

\def\Main{\mathcal{M}}

\renewcommand{\Re}{\mathop{\mathfrak{Re}}\nolimits}
\renewcommand{\Im}{\mathop{\mathfrak{Im}}\nolimits}

\graphicspath{{./images/}}

\usepackage{xr-hyper}
\usepackage[colorlinks=true, allcolors=blue]{hyperref}

\newtheorem{theorem}{Theorem}[section]
\newtheorem{corollary}[theorem]{Corollary}
\newtheorem{proposition}[theorem]{Proposition}
\newtheorem{lemma}[theorem]{Lemma}
\theoremstyle{definition}
\newtheorem{definition}[theorem]{Definition}
\newtheorem{descript}[theorem]{Description}
\newtheorem{conjecture}[theorem]{Conjecture}

\newtheorem{notation}[theorem]{Notation}
\newtheorem{example}[theorem]{Example}
\newtheorem{remark}[theorem]{Remark}

\DeclareMathOperator{\sgn}{sgn}
\DeclareMathOperator{\Aut}{Aut}
\DeclareMathOperator{\Lin}{Lin}
\DeclareMathOperator{\Anc}{Anc}

\newcommand\til[1]{\widetilde{\phantom{,\mkern-4mu} #1 \phantom{,\mkern-4mu}}\mkern-1mu}

\newcommand\numeq[1]%
  {\stackrel{\scriptstyle(\mkern-1.5mu#1\mkern-1.5mu)}{=}}

\providecommand{\keywords}[1]{\textbf{Keywords:} #1}
\providecommand{\msc}[1]{\textbf{MSC 2020:} #1}

\newcommand\freefootnote[1]{%
    \bgroup
    \renewcommand\thefootnote{\fnsymbol{footnote}}%
    \renewcommand\thempfootnote{\fnsymbol{mpfootnote}}%
    \footnotetext[0]{#1}%
    \egroup
}

\begin{document}

\title{Relation graphs of the sedenion algebra}
\author{
Alexander Guterman$^{a,b,c}$ and Svetlana Zhilina$^{a,b}$
}
\date{\small \em
$^a$Department of Mathematics and Mechanics, Lomonosov Moscow State\\ University, Moscow, 119991, Russia\\
$^b$Moscow Center of Fundamental and Applied Mathematics, Moscow, 119991, Russia\\
$^c$Moscow Institute of Physics and Technology, Dolgoprudny, 141701, Russia
}

\maketitle

\begin{abstract}
Let $\mathbb{S}$ denote the algebra of the sedenions, and $\Gamma_O(\mathbb{S})$ denote its orthogonality graph. We observe that any pair of zero divisors in $\mathbb{S}$ produces a double hexagon in $\Gamma_O(\mathbb{S})$. The set of vertices of a double hexagon can be extended to a basis of $\mathbb{S}$ which has a convenient multiplication table. We describe explicitly the set of vertices of an arbitrary connected component of $\Gamma_O(\mathbb{S})$ and find its diameter. We then establish the bijection between the connected components of $\Gamma_O(\mathbb{S})$ and lines in the imaginary part of the octonions. Finally, we consider the commutativity graph of the sedenions and discover that all elements whose imaginary part is a zero divisor belong to the same connected component, and its diameter lies between $3$ and $4$.
\end{abstract}

\keywords{Cayley-Dickson algebras, sedenions, relation graphs, connected components.}

\msc{05C25, 17A20, 17D05}

\freefootnote{This work was supported by the Russian Science Foundation (project No. 17-11-01124).}

\freefootnote{Email addresses: \texttt{alexander.guterman@biu.ac.il} (Alexander Guterman), \texttt{s.a.zhilina@gmail.com} (Svetlana Zhilina)}

\section{Introduction}

The algebra of the sedenions is an important example of real Cayley-Dickson algebras. It is the first algebra of the main sequence which is not alternative, and, therefore, has zero divisors. The structure of its zero divisors proves to be interesting and harmonious.

Some of the most prominent articles on zero divisors of the algebras of the main sequence include the works by Moreno~\cite{moreno, moreno_constructing} and Biss, Dugger, and Isaksen~\cite{biss, biss2}. Moreno was the first to study doubly alternative zero divisors, that is, the elements whose both components are alternative in the previous algebra of this sequence. He established several interesting properties of such zero divisors, see~\cite[pp. 25-27]{moreno}.

A convenient method of visualization of a binary algebraic relation $R$ is to define a corresponding graph. Its vertices represent elements or their equivalence classes in an algebraic structure under consideration, and there is an edge from $x$ to $y$ if and only if $xRy$. The most popular relation graphs of various algebras are commutativity, orthogonality, and zero divisor graphs.

Some of the recent works on relation graphs of real Cayley-Dickson algebras include~\cite{our_split-algebras, our_split-sedenions} where relation graphs of the split-complex numbers, the split-quaternions, the split-octonions, and the split-sedenions have been described. We refer to these algebras as real low-dimensional Cayley-Dickson split-algebras. Moreover,~\cite[Theorem~4.30]{our_split-algebras} establishes a simple relationship between commutativity and orthogonality graphs of these algebras. In case of the sedenions, however, this relationship is much more complex. The reason is that we should use Lemma~\ref{lemma:A_n-commutativity-through-orthogonality}\eqref{item:norm-zero} of the current paper for low-dimensional split-algebras, while Lemma~\ref{lemma:A_n-commutativity-through-orthogonality}\eqref{item:norm-nonzero} holds for the sedenions.

In the work~\cite{our_orthographs1} by the second author we have studied zero divisors with some alternativity and norm restrictions in arbitrary real Cayley-Dickson algebras. We have discovered that they form hexagonal patterns in orthogonality and zero divisor graphs, see~\cite[Corollary~3.7]{our_orthographs1}.
In the algebras of the main sequence, any pair of zero divisors generates the so-called double hexagon, see~\cite[Description~4.8]{our_orthographs1}.
The vertices of a double hexagon have a convenient multiplication table which has a block structure, cf.~\cite[Theorem~4.11]{our_orthographs1}.

In this paper we study the orthogonality graph of the sedenions, denoted by $\Gamma_O(\mathbb{S})$. It is well known that the algebra of the octonions is alternative, and every sedenion can be expressed as a pair of octonions. Hence, in case of the sedenions, every zero divisor is doubly alternative, so we may apply the results of~\cite{our_orthographs1} and obtain that every pair of zero divisors generates a double hexagon in $\Gamma_O(\mathbb{S})$. 

One can verify that any automorphism $\phi$ of the octonions can be extended to the automorphism $\widehat{\phi}$ of the sedenions by setting $\widehat{\phi}((a,b)) = (\phi(a), \phi(b))$. Khalil and Yiu have shown that the group $\Aut_{\mathbb{R}}(\mathbb{O})$ acts freely and transitively on
$$
\{ (x,y) \in \mathbb{S} \times \mathbb{S} \; | \; n(x) = n(y) = 1, \, xy = 0 \}.
$$
In particular, any pair of zero divisors $(a,b)$ and $(c,d)$ can always be replaced with some multiples of $(e_1,e_2)$ and $(e_7,e_4)$. It has been proved in~\cite[Proposition~3.4(ii)]{chan} that, since $e_1 e_2 = e_3 = - e_7 e_4$, the condition $(a,b)(c,d) = 0$ implies $n(c)ab = -n(a)cd$. We will see soon that this property of the sedenions is particularly important. Specifically, it secures that the set of vertices of a double hexagon can be extended to a basis which has a neat multiplication table, cf. Theorem~\ref{theorem:neat-sedenions-basis}.

We would like to mention the work by Chan and {\DJ}okovi\'c~\cite{chan} which is devoted to the classification of the subalgebras of $\mathbb{S}$. When choosing the representatives of their conjugacy classes under the action of $\Aut_{\mathbb{R}}(\mathbb{S})$, Chan and {\DJ}okovi\'c definitely used some relations between zero divisors which are stated in our multiplication table. Conversely, the multiplication table from this work provides an easy method to write down several subalgebras of $\mathbb{S}$ which contain an arbitrary zero divisor, see Proposition~\ref{proposition:zero-divisor-subalgebras}.

We also study the connected components of $\Gamma_O(\mathbb{S})$. Clearly, for any two zero divisors $(a,b)$ and $(c,d)$ which belong to the same connected component, the elements $ab$ and $cd$ are linearly dependent. It follows also from~\cite[pp.~25-27]{moreno} that $ab$ and $cd$ are pure. Hence we can assign a unique line in the imaginary part of the octonions to each connected component of $\Gamma_O(\mathbb{S})$. By Theorem~\ref{theorem:component-octonion}, this correspondence is actually bijective. The diameter of each connected component equals~$3$, see Theorem~\ref{theorem:sedenions-orthogonality}.

In Subsection~\ref{subsection:basis-pairs} we describe a subgraph of $\Gamma_O(\mathbb{S})$ on those vertices whose both components are standard basis elements up to sign, that is, which are of the form $(e_i, \pm e_j)$. We note that the hexagons in Figure~\ref{figure:subgraphs} have already appeared in a work by Brown~\cite[Theorem~8.1]{brown}. Besides, zero divisors of the form $(e_i, \pm e_j)$ have been studied by de Marrais~\cite{marrais}. He has also obtained an analogue of our double hexagons~\cite[p.~3]{marrais2} and the multiplication table of their vertices~\cite[p.~8]{marrais3}. We emphasize, however, that our proof is more general.

In~\cite{cawagas} Cawagas has classified all subloops of the loop of standard base sedenions. He has found $7$ isomorphic copies of a quasi-octonionic loop and has discovered that all zero divisors found by de Marrais are confined to these copies.

Finally, in Section~\ref{section:commutativity-graph} we consider the commutativity graph of $\mathbb{S}$. We show that all elements whose imaginary part is a zero divisor belong to the same connected component, and its diameter lies between $3$ and $4$. Based on several computations in Wolfram Mathematica, we conjecture that the diameter is likely to be equal to $3$.

\section{An overview of real Cayley-Dickson algebras} \label{section:A_n}

\subsection{Algebraic relations and their graphs} \label{subsection:definitions}

Let $\mathbb{F}$ be an arbitrary field and $(\A, +, \cdot)$ be an algebra with a unity $1_{\A}$ over the field $\mathbb{F}$. $\A$ is not assumed to be commutative or associative. We say that $a, b \in \A$ {\em anticommute} if $ab + ba = 0$, and $a$ and $b$ {\em are orthogonal} if $ab = ba = 0$.
We denote the set of zero divisors (left, right, or two-sided) of $\A$ by $Z(\A)$, and the set of two-sided zero divisors of $\A$ by $Z_{LR}(\A)$. We also denote the center of $\A$ by $C_{\A}$.

We denote the set of all automorphisms of an algebra $\A$ over a field $\mathbb{F}$ by $\Aut_{\mathbb{F}}(\A)$. Evidently, any automorphism preserves pairs of commuting elements, pairs of orthogonal elements, and pairs of zero divisors.

\begin{definition}
Let $a$ be an arbitrary element of $\A$.
\begin{itemize}
    \item
    {\em The centralizer} of $a$ is $C_\A(a) = \big\{ b \in \A \: | \: ab=ba \big\}$, namely, the set of all elements in $\A$ which commute with $a$.
    \item
    {\em The anticentralizer} of $a$ is $\Anc_\A(a) = \big\{ b \in \A \: | \:  ab+ba=0 \big\}$, namely, the set of all elements in $\A$ which anticommute with $a$.
    \item
    {\em The orthogonalizer} of $a$ is $O_\A(a)=\big\{ b \in \A \: | \;  ab=ba=0 \big\}$, namely, the set of all elements in $\A$ which are orthogonal to $a$.
\end{itemize}
\end{definition}

Clearly, $C_\A(a)$, $\Anc_\A(a)$, and $O_\A(a)$ are vector spaces over $\mathbb{F}$.

\begin{notation}
For any set $X \subseteq \A$ we denote the set of lines passing through elements of $X$ by
$$
P(X) = \{ [x] = \mathbb{F} x \; | \; x \in X \}.
$$
\end{notation}

We can now introduce some relation graphs to be studied in this paper.

\begin{definition}
Let $\A$ be an arbitrary algebra.
\begin{itemize}
\item 
{\em The commutativity graph} $\Gamma_C(\A)$ is defined as follows: its vertices are lines in $\A \setminus C_\A$, that is,
$$
V(\Gamma_C(\A)) = P(\A \setminus C_\A),
$$
and distinct vertices $[a]$ and $[b]$ are adjacent if and only if $ab=ba$.
\item 
{\em The orthogonality graph} $\Gamma_O(\A)$ is defined as follows: its vertices are lines in $Z_{LR}(\A)$, that is,
$$
V(\Gamma_O(\A)) = P(Z_{LR}(\A)),
$$
and distinct vertices $[a]$ and $[b]$ are adjacent if and only if $ab=ba=0$.
\end{itemize}
\end{definition}

Note that the edges of $\Gamma_C(\A)$ and $\Gamma_O(\A)$
are well-defined. When speaking of the vertices of these graphs, we will not distinguish between a nonzero element $a$ and a line $[a] = \mathbb{F} a$ passing through it.

Recall also from the graph theory that, in an undirected graph $\Gamma$, $d(x,y) = d_{\Gamma}(x,y)$ denotes the distance between two vertices $x$ and $y$, and $d(\Gamma) = \sup\limits_{x,y \in \Gamma} d(x,y)$ denotes the diameter of $\Gamma$. A clique is a set of pairwise adjacent vertices, and it is called maximal if it is is maximal by inclusion. The girth $g(\Gamma)$ is the length of the shortest cycle in $\Gamma$.

\subsection{Constructing Cayley-Dickson algebras} \label{subsection:A_n}

We refer the reader to~\cite{mccrimmon,schafer} for auxiliary definitions and general properties of Cayley-Dickson algebras.

\begin{definition} \label{definition:cayley-dickson-algebras}
Let $\A$ be an algebra over a field $\mathbb{F}$ with an involution $a \mapsto \bar{a}$. The algebra $\A \{ \gamma \}$ produced by the Cayley-Dickson process, when applied to $\A$ with the parameter $\gamma \in \mathbb{F}$, $\gamma \neq 0$, is defined as the set of ordered pairs of elements of $\A$ with operations
\begin{align*}
\alpha(a,b)&=(\alpha a, \alpha b);\\
(a,b)+(c,d)&=(a+c,b+d);\\
(a,b)(c,d)&=(ac+\gamma \bar{d}b,da+b\bar{c})
\end{align*} 
and the involution
$$
\qquad (\overline{a,b})=(\bar{a},-b), \qquad a,b,c,d\in \A, \ \alpha \in \mathbb{F}.
$$
If the involution on $\A$ is regular, that is, $a + \bar{a} \in \mathbb{F}1_{\A}$ and $a\bar{a} = \bar{a}a \in \mathbb{F}1_{\A}$ for all $a \in \A$, then the involution on $\A \{ \gamma \}$ is also regular, cf.~\cite[p.~435]{schafer}.
\end{definition}

Henceforth we assume that $\mathbb{F} = \mathbb{R}$. We now define an arbitrary real Cayley-Dickson algebra which is determined by the set of its parameters. The most common definition of real Cayley-Dickson algebras assumes that all parameters are equal to $-1$. These particular algebras are what we call the algebras of the main sequence.

\begin{definition} \label{definition:A_n}
For every integer $n \geq 0$ and nonzero real numbers $\gamma_0, \dots, \gamma_{n-1}$ we define the real Cayley-Dickson algebra $\A_n = \A_n \{ \gamma_0, \dots, \gamma_{n-1} \}$ inductively:
\begin{enumerate} [(1)]
\item $\A_0 = \mathbb{R}$, and $e^{(0)}_0 = 1$ is its only basis element;
\item If $\A_n \{ \gamma_0, \dots, \gamma_{n-1} \}$ is constructed then $\A_{n+1} \{ \gamma_0, \dots, \gamma_n \}=(\A_n \{ \gamma_0, \dots, \gamma_{n-1} \}) \{ \gamma_n \}$. Its basis elements are $e^{(n+1)}_0, \dots, e^{(n+1)}_{2^{n+1}-1}$ such that
\begin{equation*}
e^{(n+1)}_m =
\begin{cases}
(e^{(n)}_m,0), & 0 \leq m \leq 2^n-1,\\
(0,e^{(n)}_{m-2^n}), & 2^n \leq m \leq 2^{n+1}-1.
\end{cases}
\end{equation*}
\end{enumerate}
\end{definition}

For every integer $n \geq 0$ the structure $\A_n$ in Definition~\ref{definition:A_n} is a $2^n$-dimensional algebra over $\mathbb{R}$ with the unit element $e^{(n)}_0$ and a regular involution, cf.~\cite[Lemma 3.14]{our_split-algebras}. We will denote $1=e^{(n)}_0$ and $r=re^{(n)}_0$ for $r \in \mathbb{R}$. Consider the following definitions which are analogous to those for complex numbers.

\begin{definition} \label{definition:real-imaginary-part}
\leavevmode
\begin{itemize}
	\item Let $a \in \A_n$. Its {\em real part} is $\Re(a) = \frac{a + \bar{a}}{2}$, its {\em imaginary part} is $\Im(a) = \frac{a - \bar{a}}{2}$, and its {\em norm} is $n(a) = a \bar{a} = \bar{a}a$. Since the involution on $\A_n$ is regular, $\Re(a), n(a) \in \mathbb{R}$.
	\item An element $a \in \A_n$ is said to be {\em pure} if $\Re(a)=0$.
	\item An element $(a, b) \in \A_{n+1}$ is said to be {\em doubly pure} if $\Re(a) = \Re(b) = 0$.
\end{itemize}
\end{definition}

\begin{proposition} {\rm \cite[p. 435]{schafer}}
We can compute real part and norm of an element $(a,b) \in \A_{n+1}$ inductively by using the following equalities:
\begin{align*}
	\Re((a,b)) &= \Re(a),\\
	n((a,b)) &= n(a) - \gamma_n n(b).
\end{align*}
\end{proposition}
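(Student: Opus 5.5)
We argue by direct computation from Definition~\ref{definition:cayley-dickson-algebras}, taking as given that the involution on $\A_n$ is regular, so that $\Re(a), n(a) \in \mathbb{R}$ for every $a \in \A_n$.

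\textbf{Real part.} By definition $\overline{(a,b)} = (\bar a, -b)$. Hence
$$
(a,b) + \overline{(a,b)} = (a + \bar a, 0) = (2\Re(a), 0).
$$
Since $\Re(a) = r \cdot 1_{\A_n}$ for some real $r$, the element $(r 1_{\A_n}, 0) = r\,(1_{\A_n}, 0)$ is $r$ times the unit of $\A_{n+1}$. Therefore $\Re((a,b)) = \Re(a)$, where both sides are read as real numbers.

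\textbf{Norm.} We compute $(a,b)\overline{(a,b)} = (a,b)(\bar a, -b)$ with the multiplication rule $(a,b)(c,d) = (ac + \gamma \bar d b,\, da + b \bar c)$, taking $c = \bar a$ and $d = -b$.
\begin{itemize}
\item The first component is $a\bar a + \gamma_n \overline{(-b)}\, b = n(a) - \gamma_n \bar b b = n(a) - \gamma_n n(b)$. This uses $\overline{-b} = -\bar b$ (the involution is linear) and $\bar b b = n(b)$, which holds by regularity.
\item The second component is $(-b)a + b\,\bar{\bar a} = -ba + ba = 0$, using $\bar{\bar a} = a$.
\end{itemize}
So $(a,b)\overline{(a,b)} = \bigl(n(a) - \gamma_n n(b)\bigr)\cdot 1_{\A_{n+1}}$.

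The regularity of the involution on $\A_{n+1}$ also gives $\bar x x = x \bar x$ for $x = (a,b)$, so either product defines the norm. Alternatively, one can compute $(\bar a, -b)(a,b)$ directly:
\begin{itemize}
\item The first component is $\bar a a - \gamma_n \bar b b = n(a) - \gamma_n n(b)$.
\item The second component is $b\bar a - b\bar a = 0$.
\end{itemize}
This confirms $n((a,b)) = n(a) - \gamma_n n(b)$.

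The only point needing care is the correct ordering of factors in the second component, since $\A_n$ is not commutative. Each term nevertheless cancels exactly as written, with no commutativity required.
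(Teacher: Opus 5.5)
Your computation is correct: expanding $(a,b)+\overline{(a,b)}$ and $(a,b)\overline{(a,b)}$ with the Cayley--Dickson multiplication rule, using only linearity of the involution, $\bar{\bar a}=a$, and regularity on $\A_n$, gives both formulas. The paper has no proof of its own and simply cites Schafer, where the statement rests on exactly this routine verification, so your approach is essentially the same.
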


\begin{remark}
The norm of $a$ is often defined as $\sqrt{a\bar{a}}$, unlike $n(a)=a\bar{a}$ in this paper. However, most of the results can be easily extended to the norm modified in this way.
\end{remark}

\subsection{Some properties of real Cayley-Dickson algebras} \label{subsection:A_n-properties}

Henceforth we assume that $\A$ is an arbitrary algebra over a field $\mathbb{F}$, and $\A_n = \A_n \{ \gamma_0, \dots, \gamma_{n-1} \}$ is an arbitrary real Cayley-Dickson algebra. By~\cite[Exercise 2.5.1]{mccrimmon}, $\A_n \{ \gamma_0, \dots, \gamma_{n-1} \}$ is isomorphic to $\A_n \{ \sgn(\gamma_0), \dots, \sgn(\gamma_{n-1}) \}$, so it is sufficient to consider only $\gamma_k \in \{ \pm 1 \}$, $k = 0, \dots, n-1$.

\begin{notation} \label{proposition:lambda-form}
Let $\langle a, b \rangle$ denote a real-valued symmetric bilinear form associated with the quadratic form $n(a)$. Then $\langle a, a \rangle = n(a)$ and $2\langle a,b \rangle = a \bar{b} + b \bar{a} = \bar{a} b + \bar{b} a$ for all $a, b \in \A_n$, cf. \cite[Proposition~3.18, Proposition~3.19]{our_split-algebras}.

We remark that for all $a, b \in \A_n$ we have $\langle a, b \rangle = \langle \bar{a}, \bar{b} \rangle$ and $\Re(a) = \langle a, 1 \rangle$.
\end{notation}

The following lemma describes the anticentralizer of an arbitrary nonzero pure element of $\A_n$.

\begin{lemma} {\rm \cite[Lemma 5.8]{our_anticomm} } \label{lemma:A_n-anticomm}
Let $a \in \A_n$, $\Re(a) = 0$, $a \neq 0$. Then 
\begin{equation*}
    \Anc_{\A_n}(a) = \left\{ b \in \A_n \; | \; \Re(b) = 0 \mbox{ and } \langle a,b \rangle = 0 \right\}.
\end{equation*}
\end{lemma}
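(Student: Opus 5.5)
Write $a = \Re(a) + \Im(a)$ and $b = \Re(b) + \Im(b)$; with $a$ pure we have $a = \Im(a)$. Since $\Re(b)$ is a real scalar, it commutes with everything, so
$$
ab + ba = 2\Re(b)\, a + a\Im(b) + \Im(b) a .
$$
The whole argument will rest on the regularity of the involution (Definition~\ref{definition:cayley-dickson-algebras}) together with the identity $2\langle x,y\rangle = x\bar y + y\bar x$ from Notation~\ref{proposition:lambda-form}.

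First take $x = a$ and $y = \Im(b)$, both pure, so $\bar x = -x$ and $\bar y = -y$. The identity then reads $2\langle a, \Im(b)\rangle = -a\Im(b) - \Im(b)a$. Hence
$$
ab + ba = 2\Re(b)\, a - 2\langle a, \Im(b)\rangle .
$$
The first term, $2\Re(b)\,a$, is pure: $\Re(a)=0$ and $\Re$ is linear. The second term is a real scalar. Since $\A_n = \mathbb{R}1 \oplus \{\text{pure elements}\}$, the sum vanishes if and only if both parts vanish.

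Because $a \neq 0$, the pure part vanishes exactly when $\Re(b) = 0$. The real part vanishes exactly when $\langle a, \Im(b)\rangle = 0$. When $\Re(b)=0$ we have $\Im(b) = b$, so this condition is $\langle a,b\rangle = 0$. More generally, $\langle a, 1\rangle = \Re(a) = 0$ gives $\langle a, \Im(b)\rangle = \langle a, b\rangle - \Re(b)\langle a,1\rangle = \langle a,b\rangle$, so the scalar term is $-2\langle a,b\rangle$ in every case. This yields both inclusions at once, proving the description of $\Anc_{\A_n}(a)$.

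The only step needing care is the sign in the polarization identity for pure elements. Specifically, $\overline{\Im(b)} = -\Im(b)$ holds because $\bar{\ }$ is a linear involution fixing reals with $\Re(x) = (x+\bar x)/2$. Note also that nonassociativity does not enter at any point, since only products of two elements appear.
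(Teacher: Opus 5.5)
Your proof is correct. The paper gives no argument for this lemma; it simply quotes \cite[Lemma~5.8]{our_anticomm}, so there is no in-paper route to compare yours against.

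Your key identity $ab+ba = 2\Re(b)\,a - 2\langle a,b\rangle$ for pure $a$ is the linearization of $x^2 - 2\Re(x)\,x + n(x) = 0$. It needs only three things: bilinearity of the product, regularity of the involution, and the polarization formula in Notation~\ref{proposition:lambda-form}. The decomposition into a pure part and a real part then finishes the argument. This makes the proof self-contained and valid in every $\A_n$, including the split algebras where $n(a)$ may vanish.
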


We now proceed to some concepts related to associativity. For $a,b,c \in \A$ we denote their associator by $[a,b,c] = (ab)c - a(bc)$, and their anti-associator by $\{ a,b,c \} = (ab)c + a(bc)$. An element $a \in \A$ is called {\em alternative} if for all $b \in \A$ the equalities $[a,a,b] = 0$ and $[b,a,a] = 0$ hold. If all elements of $\A$ are alternative then $\A$ is called {\em alternative}. It is well known that $\A_n$ is alternative if and only if $n \leq 3$. However, all Cayley-Dickson algebras are {\em flexible}, that is, they satisfy $[a,b,a] = 0$ for all $a,b \in \A$, cf.~\cite[p.~437, Theorem~1]{schafer}.

\begin{proposition} \label{proposition:skew-symm-associator}   \rm \cite[Exercise 2.1.1]{mccrimmon}
\leavevmode
\begin{itemize}
    \item
    If $\A$ is flexible then for all $a,b,c \in \A$ we have $[a,b,c]=-[c,b,a]$.
    \item
    If $\A$ is alternative then the associator in $\A$ is skew-symmetric, that is, it changes sign if an argument transposition is performed.
\end{itemize}
\end{proposition}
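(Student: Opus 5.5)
The statement is Proposition~\ref{proposition:skew-symm-associator}, and both parts follow by polarizing the defining identities. Since the associator $[a,b,c]=(ab)c-a(bc)$ is trilinear in $a,b,c$, polarization is legitimate, and no property of Cayley-Dickson algebras is needed; the argument works over any field $\mathbb{F}$ with no restriction on its characteristic.

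For the first item, assume $\A$ is flexible, that is, $[x,y,x]=0$ for all $x,y\in\A$. Substitute $x=a+c$ and $y=b$ and expand by trilinearity:
$$
0=[a+c,b,a+c]=[a,b,a]+[a,b,c]+[c,b,a]+[c,b,c].
$$
The first and last terms vanish by flexibility. Hence $[a,b,c]+[c,b,a]=0$, which is the claimed identity $[a,b,c]=-[c,b,a]$.

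For the second item, assume $\A$ is alternative, so $[a,a,b]=0$ and $[b,a,a]=0$ for all $a,b$. Polarize each identity in the same way. Replacing $a$ by $a+c$ in $[a,a,b]=0$ gives $[a,c,b]+[c,a,b]=0$, so the associator is skew under swapping its first two arguments. Replacing $a$ by $b+c$ in $[a,b',b']=0$ gives, after renaming, $[a,b,c]=-[a,c,b]$, so it is skew under swapping its last two arguments. The transpositions $(12)$ and $(23)$ generate the symmetric group $S_3$, so every transposition of arguments multiplies the associator by $-1$. Explicitly, the remaining transposition $(13)$ is $(12)(23)(12)$, which gives three sign changes and hence $[a,b,c]=-[c,b,a]$. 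This also follows directly from the first item, since alternative algebras are flexible by linearizing $[a,a,b]=0$.

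No step is a real obstacle. The only points needing care are bookkeeping: keeping track of which argument slots are polarized, and noting that cancelling the vanishing diagonal terms uses only additivity, not division by $2$.
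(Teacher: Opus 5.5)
Your polarization argument is correct; the paper gives no proof of its own and only cites McCrimmon's Exercise~2.1.1, whose standard solution is exactly this linearization. One small slip: in the right-alternative step you should replace the repeated argument $b'$ (not $a$) by $b+c$ in $[a,b',b']=0$, which gives $[a,b,c]+[a,c,b]=0$. Likewise, your parenthetical deduction of flexibility from alternativity needs the linearized left law together with $[b,a,a]=0$ (namely $[a,b,a]=-[b,a,a]=0$), not linearization of $[a,a,b]=0$ alone.
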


\begin{notation}
Let $m \in \mathbb{N}$, $a_1, \dots, a_m \in \A_n$. Then we denote
\begin{align*}
\Lin(a_1, \dots, a_m) &= \mathbb{R}a_1 + \dots + \mathbb{R}a_m,\\
\Lin^*(a_1, \dots, a_m) &= \Lin(a_1, \dots, a_m) \setminus \{ 0 \}.
\end{align*}
\end{notation}

\section{Algebras of the main sequence} \label{section:main-sequence}

\subsection{Definition and examples} \label{subsection:A_n-examples}

\begin{definition}
It is said that the algebra $\A_n \{ \gamma_0, \dots, \gamma_{n-1} \}$ is an algebra of {\em the main sequence}, if $\gamma_k = -1$ for each $k = 0, \dots, n-1$. We denote this algebra by $\Main_n$.
\end{definition}

\begin{proposition} {\rm \cite[Proposition~3.31]{our_split-algebras}} \label{proposition:A_n-euclidean-product}
Let $a = \sum\limits_{m=0}^{2^n-1} a_{m}e^{(n)}_{m}, \ \ b = \sum\limits_{m=0}^{2^n-1} b_{m}e^{(n)}_{m} \in \Main_n$. Then $\langle a, b \rangle = \sum\limits_{m=0}^{2^n-1} a_m b_m$ is a Euclidean inner product. Particularly, $n(a) = \sum\limits_{m=0}^{2^n-1} a_{m}^2$, so $n(a)=0$ if and only if $a=0$.
\end{proposition}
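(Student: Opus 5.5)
The plan is to prove the statement by induction on $n$, using the inductive formula for the norm and the recursive description of the standard basis in Definition~\ref{definition:A_n}. The two claims are that $\langle a,b\rangle=\sum a_m b_m$ and that $n(a)=\sum a_m^2$. Since $\langle\cdot,\cdot\rangle$ is the symmetric bilinear form associated with $n$, it is recovered by polarization:
$$
\langle a,b\rangle=\tfrac12\bigl(n(a+b)-n(a)-n(b)\bigr).
$$
It therefore suffices to prove $n(a)=\sum_m a_m^2$ for every $a\in\Main_n$. Once this is done, $\langle\cdot,\cdot\rangle$ is the standard dot product in the basis $e^{(n)}_0,\dots,e^{(n)}_{2^n-1}$, so it is a Euclidean inner product. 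Positive definiteness then gives $n(a)=0$ if and only if $a=0$.

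For the base case $n=0$, we have $\Main_0=\mathbb{R}$ with the trivial involution, so $n(a)=a\bar a=a^2=a_0^2$.

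For the inductive step, write $a\in\Main_{n+1}$ as $a=(x,y)$ with $x,y\in\Main_n$. By Definition~\ref{definition:A_n},
$$
x=\sum_{m=0}^{2^n-1}a_m e^{(n)}_m, \qquad y=\sum_{m=0}^{2^n-1}a_{m+2^n}e^{(n)}_m .
$$
The cited proposition from~\cite{schafer} gives $n((x,y))=n(x)-\gamma_n n(y)$. In the main sequence $\gamma_n=-1$, so $n((x,y))=n(x)+n(y)$. Applying the induction hypothesis to $x$ and to $y$, this equals $\sum_{m=0}^{2^{n+1}-1}a_m^2$, which completes the induction.

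There is no real obstacle here. The only point needing care is that $\langle\cdot,\cdot\rangle$ is determined by $n$ through polarization, which holds by its definition as the associated symmetric bilinear form. A second point to check is that the coordinates of $(x,y)$ split as described, which is immediate from the definition of $e^{(n+1)}_m$.
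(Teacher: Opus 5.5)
Your proof is correct and complete. The paper gives no proof of this statement; it only cites \cite[Proposition~3.31]{our_split-algebras}, so there is no in-paper argument to compare with.

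Your induction relies only on two things the paper itself provides: the norm recursion $n((x,y)) = n(x) - \gamma_n n(y)$ quoted from \cite{schafer}, and the coordinate splitting in Definition~\ref{definition:A_n}. The polarization step is legitimate because $\langle\cdot,\cdot\rangle$ is by definition the symmetric bilinear form with $\langle a,a\rangle = n(a)$, that is, $n(a+b)=n(a)+n(b)+a\bar b+b\bar a$.

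Polarizing the recursion gives $\langle (x,y),(u,v)\rangle = \langle x,u\rangle + \langle y,v\rangle$ in the main sequence. So the other natural route, showing that the standard basis $e^{(n)}_m$ is orthonormal, comes down to the same induction. Your version simply packages it more economically and never needs products of basis elements.
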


\begin{example}
Some examples of the real Cayley-Dickson algebras of the main sequence include the complex numbers ($\mathbb{C}$), the quaternions ($\mathbb{H}$), the octonions ($\mathbb{O}$), and the sedenions ($\mathbb{S}$) for $n=1,\:2,\:3,$ and $4$, correspondingly. We refer the reader to~\cite{baez} for the definitions of these algebras.
\end{example}

Exact definitions and some basic properties of $\mathbb{O}$ and $\mathbb{S}$ are given below.

\begin{definition} {\rm \cite[p. 6]{baez}}
$\mathbb{O}$ is an eight-dimensional algebra over $\mathbb{R}$, its basis elements being equal to $1,e_1,\dots,e_7$. The involution in $\mathbb{O}$ is given by the formula
$$
\overline{a_0 + a_1e_1 + \dots + a_7e_7} = a_0 - a_1e_1 - \dots - a_7e_7,
$$
and multiplication is given by Table~\ref{table:octonions-mult}.

\begin{table}[H]
\centering
$
\begin{array}{|c|cccccccc|}
\hline
\times&1&e_1&e_2&e_3&e_4&e_5&e_6&e_7\\\hline
1&1&e_1&e_2&e_3&e_4&e_5&e_6&e_7\\
e_1&e_1&-1& e_3& -e_2& e_5& -e_4& -e_7& e_6\\
e_2&e_2&-e_3& -1& e_1& e_6& e_7& -e_4& -e_5\\
e_3&e_3&e_2& -e_1& -1& e_7& -e_6& e_5& -e_4\\
e_4&e_4&-e_5& -e_6& -e_7& -1& e_1& e_2& e_3\\
e_5&e_5&e_4& -e_7& e_6& -e_1& -1& -e_3& e_2\\
e_6&e_6&e_7& e_4& -e_5& -e_2& e_3& -1& -e_1\\
e_7&e_7&-e_6& e_5& e_4& -e_3& -e_2& e_1& -1\\\hline
\end{array}
$
\caption{\label{table:octonions-mult} Multiplication table of the unit octonions.}
\end{table}
\end{definition}

It is well known that $\mathbb{O} \cong \Main_3$, and thus $\mathbb{O}$ is a non-commutative non-associative but alternative algebra without zero divisors, cf.~\cite[p.~10]{baez}.

The algebra of {\em the sedenions} is defined as $\mathbb{S} = \Main_4$. One can easily see that $\mathbb{S} = \Main_4 = \Main_3 \{ -1 \} \cong \mathbb{O} \{ -1 \}$, and this isomorphism provides the most convenient representation for the sedenions. $\mathbb{S}$ is a non-commutative non-associative and non-alternative algebra with zero divisors, cf.~\cite[p.~2]{moreno}.

\subsection{Zero divisors and double hexagons} \label{subsection:main-sequence}

By~\cite[Corollary~4.6]{our_split-algebras}, in case of real Cayley-Dickson algebras all zero divisors appear to be two-sided zero divisors, that is, $Z(\A_n) = Z_{LR}(\A_n)$. In case of the algebras of the main sequence a stronger result holds.

\begin{remark}
By~\cite[Corollary~1.6]{moreno}, $xy = 0$ in $\Main_n$ if and only if $yx = 0$. Hence $\Gamma_O(\Main_n)$ completely represents relations between zero divisors in $\Main_n$.
\end{remark}

\begin{lemma} {\rm \cite[Corollary~1.12]{moreno}} \label{lemma:moreno-tilde}
Let $x = (x_1,x_2)$, $y = (y_1,y_2)$, $\til{y} = (-y_2,y_1) \in \Main_n$. Then $xy=0$ if and only if $x\til{y} = 0$.
\end{lemma}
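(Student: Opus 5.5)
The plan is to prove one direction only. Since $\widetilde{\widetilde{y}} = -y$, applying the implication ``$xy=0 \Rightarrow x\widetilde{y}=0$'' to $\widetilde{y}$ in place of $y$ gives $x\widetilde{y}=0 \Rightarrow x(-y)=0$, which is the converse.

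First I would write everything out in components with $\gamma = -1$. By Definition~\ref{definition:cayley-dickson-algebras},
\begin{align*}
xy &= \bigl(x_1y_1 - \bar{y}_2x_2,\; y_2x_1 + x_2\bar{y}_1\bigr),\\
x\widetilde{y} &= \bigl(-x_1y_2 - \bar{y}_1x_2,\; y_1x_1 - x_2\bar{y}_2\bigr).
\end{align*}
So the hypothesis is the pair of equations $x_1y_1 = \bar{y}_2x_2$ and $y_2x_1 = -x_2\bar{y}_1$ in $\Main_{n-1}$. The goal is $x_1y_2 = -\bar{y}_1x_2$ and $y_1x_1 = x_2\bar{y}_2$.

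These are ``twisted'' versions of the hypotheses: the order of the factors is reversed and the conjugates are moved. Rearranging alone will not work, because $\Main_{n-1}$ need not be alternative. I will therefore bring in extra information from the Remark above (Moreno's Corollary~1.6): $xy=0$ implies $yx=0$. Applying the involution also gives $\bar{y}\bar{x}=0$ and $\bar{x}\bar{y}=0$. Written in components, $yx=0$ gives
$$y_1x_1 = \bar{x}_2y_2, \qquad x_2y_1 = -y_2\bar{x}_1,$$
and $\bar{x}\bar{y}=0$ gives two further relations mixing $\bar{x}_i$ and $\bar{y}_j$. The aim is to combine these four pairs of relations, using regularity of the involution ($a+\bar{a}\in\mathbb{R}$, $\overline{ab}=\bar{b}\bar{a}$), to land on the two target equations.

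If this direct manipulation does not close up, I would switch to a norm argument, using that $\langle\cdot,\cdot\rangle$ is Euclidean (Proposition~\ref{proposition:A_n-euclidean-product}). It would suffice to show
$$n(x\widetilde{y}) = n(xy) + (\text{terms that vanish when } xy=0 \text{ and } yx=0).$$
I would expand both norms via $n((a,b)) = n(a)+n(b)$. The squared terms $n(x_iy_j)$ and $n(y_jx_i)$ appear in both expansions, since $n(ab)=n(ba)$ (the norm is invariant under the involution and $\overline{ab} = \bar b\bar a$, which has the same norm as $ba$ after conjugating again). What remains is a comparison of the cross terms: $\langle x_1y_1,\bar{y}_2x_2\rangle$ and $\langle y_2x_1, x_2\bar{y}_1\rangle$ against $\langle x_1y_2,\bar{y}_1x_2\rangle$ and $\langle y_1x_1,x_2\bar{y}_2\rangle$.

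The main obstacle is this cross-term comparison. In a non-alternative algebra one cannot freely move factors inside the inner product; only the identities $\langle ab,c\rangle=\langle b,\bar{a}c\rangle$-type valid in flexible Cayley--Dickson algebras are available, and these hold only in restricted forms. I would first try to rewrite each cross term, using the inner-product identities of Notation~\ref{proposition:lambda-form}, as an inner product of a component of $xy$ or $yx$ against some other element. Every such term then vanishes under the hypothesis, and $n(x\widetilde{y})=0$ follows, hence $x\widetilde{y}=0$.
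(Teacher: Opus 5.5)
The paper only quotes this lemma from Moreno's Corollary~1.12 and gives no proof, so your argument has to stand on its own, and it has a genuine gap. The reduction to one direction via $\til{\til{y}}=-y$ and your component formulas for $xy$ and $x\til{y}$ are correct. However, the decisive step is only announced, and neither of your routes, as described, can deliver it.

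The extra relations add nothing. We have $\bar x\bar y=\overline{yx}$ and $\bar y\bar x=\overline{xy}$. Once $x,y$ are known to be pure (from $\langle x,xy\rangle=n(x)\Re(y)$ and $\langle y,xy\rangle=n(y)\Re(x)$, Lemma~\ref{lemma:inner-product-movement}\eqref{item:inner-product-movement}), even $yx=\overline{xy}$ is just the conjugate of the hypothesis.

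More decisively, for pure $x,y$ a direct computation gives
\[
x\til{y}=\bigl(-\overline{(xy)_2},\ \overline{(xy)_1}\bigr)+2\bigl(\Re(x_2)\,y-\Re(y_2)\,x\bigr),
\]
where $(xy)_1,(xy)_2$ are the components of $xy$. So if $xy=0$, then $x\til{y}=2(\Re(x_2)y-\Re(y_2)x)$. Also $x,y$ are linearly independent, since $y=\lambda x$ would give $xy=-\lambda n(x)\neq 0$. Hence your target is \emph{equivalent} to $\Re(x_2)=\Re(y_2)=0$, that is, to double purity. That statement needs the Euclidean structure ($n(x)\neq 0$ for $x\neq 0$) and has nothing to do with non-alternativity; your plan contains no step that would produce it.

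In the norm route the same issue appears as $n(x\til{y})=4\,n(\Re(x_2)y-\Re(y_2)x)$. So ``rewriting the cross terms so that they vanish'' restates the goal rather than reducing it. Separately, your justification of $n(ab)=n(ba)$ is circular: conjugating $\bar b\bar a$ gives back $ab$, not $ba$. The identity does hold in $\Main_{n-1}$, but it needs its own argument.

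The missing idea is to prove double purity directly. With $e=(0,1)$, the multiplication rule gives $\bar x(xe)=n(x)e$ and $(ey)\bar y=n(y)e$. Lemma~\ref{lemma:inner-product-movement}\eqref{item:inner-product-movement} then yields
\[
\langle xe,xy\rangle=\langle \bar x(xe),y\rangle=n(x)\Re(y_2),\qquad
\langle ey,xy\rangle=\langle (ey)\bar y,x\rangle=n(y)\Re(x_2).
\]
Hence $xy=0$ with $x,y\neq 0$ forces $\Re(x_2)=\Re(y_2)=0$. The displayed identity then collapses to $x\til{y}=(-\overline{(xy)_2},\overline{(xy)_1})$, so $x\til{y}=0$. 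Together with your reduction via $\til{\til{y}}=-y$ (and the trivial cases $x=0$ or $y=0$), this finishes the proof; no associativity or alternativity is used anywhere.
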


\begin{definition} {\rm \cite[p. 15]{moreno_alternative}}
Let $a, b \in \A_n$.
\begin{itemize}
    \item We say that $a$ alternates with $b$ if $[a,a,b] = 0$.
    \item We say that $a$ alternates strongly with $b$ if $[a,a,b] = 0$ and $[b,b,a] = 0$.
\end{itemize}
\end{definition}

It is well known that any zero divisor in $\Main_n$ is doubly pure, cf.~\cite[Corollary~1.9]{moreno}. However, alternativity restrictions give us more conditions on zero divisors. The following lemma appeared first in Moreno's work~\cite[pp. 25-27]{moreno}, and then was generalized in~\cite{our_orthographs1}.

\begin{lemma} {\rm \cite[Corollary~4.4]{our_orthographs1}}
\label{lemma:A_n-strongly-alternative-properties}
Let $a,b \in \Main_{n-1}$ alternate strongly with $c,d \in \Main_{n-1}$, $(a,b),(c,d) \in Z(\Main_n)$, $(a,b)(c,d) = 0$. Then
\begin{enumerate}[(1)]
\item $\Re(a)=\Re(b)=\Re(c)=\Re(d)=0$;
\item $n(a)=n(b)$, $n(c)=n(d)$;
\item $[a,c,b] = 2n(a)d$, $[a,d,b] = -2n(a)c$, $[c,a,d] = 2n(c)b$, $[c,b,d] = -2n(c)a$;
\item $\{ a,c,b \} = \{ a,d,b \} = \{ b,c,a \} = \{ b,d,a \} = 0$,\\
$\{ c,a,d \} = \{ c,b,d \} = \{ d,a,c \} = \{ d,b,c \} = 0$.
\end{enumerate}
If $a,b,c,d$ alternate strongly pairwise then
\begin{enumerate}[(1)]
\setcounter{enumi}{4}
\item $a,b,c,d$ are pairwise orthogonal with respect to the inner product $\langle \cdot, \cdot \rangle$;
\item $ac = bd$, $ad = -bc$.
\end{enumerate}
\end{lemma}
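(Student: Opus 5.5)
The plan is to derive all six items from the two component equations of $(a,b)(c,d)=0$, together with two symmetric versions of the same relation. Since $\gamma=-1$, the product $(a,b)(c,d)=(ac-\bar d b,\ da+b\bar c)$ vanishes, so
$$
ac=\bar d b, \qquad da=-b\bar c .
$$
By Moreno's remark, $(c,d)(a,b)=0$ as well, which gives
$$
ca=\bar b d, \qquad bc=-d\bar a .
$$
By Lemma~\ref{lemma:moreno-tilde}, $(a,b)(-d,c)=0$ too, which yields the analogous equations with $(c,d)$ replaced by $(-d,c)$. These three systems are the only inputs. Each later statement about $c$ has a twin about $d$, obtained from the $\til{y}$-version, and the statements with the roles of $(a,b)$ and $(c,d)$ swapped follow by symmetry.

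For (1), I would cite that zero divisors of $\Main_n$ are doubly pure (\cite[Corollary~1.9]{moreno}). For (2), I would first show $n(xy)=n(x)n(y)$ when $x,y$ alternate strongly. The computation is $n(xy)=(xy)(\bar y\bar x)$; I would reassociate using $[x,x,y]=[y,y,x]=0$, flexibility and $\bar x=-x$ for pure $x$. Applied to $ac=\bar d b$ and $da=-b\bar c$, this gives $n(a)n(c)=n(b)n(d)$ and $n(a)n(d)=n(b)n(c)$. Dividing, $n(a)/n(b)=n(d)/n(c)=n(b)/n(a)$. All norms are positive, so $n(a)=n(b)$ and hence $n(c)=n(d)$.

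For (3) and (4), I would use purity to rewrite the first system as $ac=-db$ and $da=bc$, and the second as $ca=-bd$ and $bc=da$. Conjugating $bc=da$ gives $cb=ad$. Then, using alternation of $b$ with $d$ and of $a$ with $d$:
\begin{align*}
(ac)b &= -(db)b = -d\,b^2 = n(b)\,d = n(a)\,d,\\
a(cb) &= a(ad) = a^2 d = -n(a)\,d .
\end{align*}
Their difference is $[a,c,b]=2n(a)d$ and their sum is $\{a,c,b\}=0$. The $\{b,c,a\}$ identity comes the same way from $ca=-bd$. Replacing $(c,d)$ by $(-d,c)$ gives the formulas with $d$, and swapping the roles of the two pairs gives $[c,a,d]$, $[c,b,d]$ and the remaining anti-associators.

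For (5), I would use $\Re(xy)=\Re(yx)$. From $ac=-db$ and $ca=-bd$ we get $\Re(ac)=-\Re(db)$ and $\Re(ac)=\Re(ca)=-\Re(bd)=-\Re(db)$, so both sides agree and this alone is inconclusive. Instead I would use $\langle x,y\rangle=-\Re(xy)$ for pure $x,y$ and compare $\langle a,c\rangle$ with $\langle b,d\rangle$ through (3): take inner products of $[a,c,b]=2n(a)d$ with suitable elements, or invoke the tilde relation, which gives $ac=bd$-type identities with opposite sign. Together these force $\langle a,c\rangle=\langle b,d\rangle=0$, and the tilde version gives $\langle a,d\rangle=\langle b,c\rangle=0$. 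For $\langle a,b\rangle$, I plan to prove $\langle xc,yc\rangle=n(c)\langle x,y\rangle$ when $c$ alternates strongly with $x$ and $y$. Then
$$
n(c)\langle a,b\rangle=\langle ac,bc\rangle=\langle -db,da\rangle=-n(d)\langle b,a\rangle ,
$$
so $\langle a,b\rangle=0$; $\langle c,d\rangle=0$ follows symmetrically.

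For (6), once $b,d$ are pure and orthogonal, Lemma~\ref{lemma:A_n-anticomm} shows that they anticommute, so $ac=-db=bd$. The same argument with the tilde pair gives $ad=-bc$.

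The main obstacle is step (5), specifically getting the sign bookkeeping to actually force the inner products to vanish. The norm and inner-product multiplicativity identities under mere strong alternation (rather than full alternativity) also need care. I would verify these first, since the pairwise strong alternativity hypothesis is what allows reassociating products of two of $a,b,c,d$.
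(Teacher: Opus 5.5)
Your treatment of (1)--(4) is sound. So is your argument for $\langle a,b\rangle=\langle c,d\rangle=0$ via the polarized identity $\langle xz,yz\rangle=n(z)\langle x,y\rangle$, which in fact needs no pairwise hypothesis. The genuine gap is the cross-orthogonality in (5), and (6) inherits it because it uses $\langle b,d\rangle=0$.

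Neither route you suggest produces it:
\begin{itemize}
\item The tilde pair gives nothing new. $(a,b)(-d,c)=0$ unpacks to $ad=cb$ and $ca=-bd$, which are just the conjugates of $da=bc$ and $ac=-db$. Likewise, $(c,d)(a,b)=0$ only reproduces conjugates.
\item Taking real parts of $ac=-db$ and $da=bc$, or pairing $(ac)b=n(a)d$ with $b$, yields only $\langle a,c\rangle=-\langle b,d\rangle$ and $\langle a,d\rangle=\langle b,c\rangle$. These cannot force vanishing.
\end{itemize}
You need a relation of the opposite sign. It has to come from the extra hypothesis that $a$ alternates strongly with $b$, which your plan never actually invokes.

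The missing step is to feed that hypothesis into your own polarized identity. Using $cb=ad$,
\[
n(b)\langle a,c\rangle=\langle ab,cb\rangle=\langle ab,ad\rangle=n(a)\langle b,d\rangle .
\]
The first equality needs $[b,b,a]=[b,b,c]=0$, and the last needs $[a,a,b]=[a,a,d]=0$. Since $n(a)=n(b)\neq 0$, this gives $\langle a,c\rangle=\langle b,d\rangle$; together with $\langle a,c\rangle=-\langle b,d\rangle$, both vanish.

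Similarly, $n(b)\langle a,d\rangle=\langle ab,db\rangle=-\langle ab,ac\rangle=-n(a)\langle b,c\rangle$ gives $\langle a,d\rangle=-\langle b,c\rangle$. Combined with $\langle a,d\rangle=\langle b,c\rangle$, both vanish. Alternatively, apply the previous case to the pair $(a,b),(-d,c)$, which satisfies the same hypotheses. After that, (6) follows exactly as you describe, from anticommutativity of orthogonal pure elements.
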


\begin{theorem} {\rm \cite[Lemma~3.2, Lemma~4.6, Corollary~4.7, Description~4.8, Remark~4.13]{our_orthographs1}}
\label{theorem:double-hexagon}
Let $a,b \in \Main_{n-1}$ alternate strongly with $c,d \in \Main_{n-1}$, $(a,b),(c,d) \in Z(\Main_n)$, $(a,b)(c,d) = 0$. By Lemma~\ref{lemma:A_n-strongly-alternative-properties}, we may assume without loss of generality that $n(a) = n(b) = n(c) = n(d) = 1$. Then 
\begin{enumerate}[(1)]
    \item The elements $ac,ad$ alternate strongly with $a,b,c,d$. \label{item:strong-alternativity}
    \item $1,a,b,c,d,ac,ad$ form an orthonormal system with respect to the inner product $\langle \cdot, \cdot \rangle$. \label{item:orthonormal-system}
    \item There exists a subgraph of $\Gamma_O(\Main_n)$ which is depicted in Figure~\ref{figure:double-hexagon}. We call it a {\em double hexagon}. \label{item:double-hexagon}
    \item All elements in the vertices of this double hexagon are linearly independent. \label{item:linear-independence}
\end{enumerate}
\end{theorem}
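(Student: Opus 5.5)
Everything rests on Lemma~\ref{lemma:A_n-strongly-alternative-properties}, applied repeatedly to suitably modified pairs. First I normalize. By part (2) of that lemma, $n(a)=n(b)$ and $n(c)=n(d)$, and both norms are nonzero by Proposition~\ref{proposition:A_n-euclidean-product}. Since the relation $(a,b)(c,d)=0$ and strong alternativity are both invariant under positive scaling, I may assume all four norms equal $1$.

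For item~\eqref{item:strong-alternativity}, I want to show that $ac$ and $ad$ alternate strongly with $a,b,c,d$. The conditions are of the form $[x,x,y]=0$ and $[y,y,x]=0$. For the pair $ac$ and $a$, I would use the Moufang-type identities in the subalgebra generated by the strongly alternating pair $a,c$. The point is that the alternativity relations $[a,a,c]=[c,c,a]=0$, together with flexibility, make $\Lin(1,a,c,ac)$ behave like a quaternion subalgebra. Then $a(ac)=a^2c=-c$ and $(ac)(ac)=-n(ac)=-1$, and the required associators vanish.

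For the pairs $ac$ with $b$ and with $d$, I would use part (6): $ac=bd$. The element $bd$ lives in the algebra generated by $b,d$, which gives the relations with $b$ and $d$ by the same argument. If part (6) is unavailable because only partial strong alternativity holds, I would instead use parts (3)--(4) to rewrite products.

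I would treat $ad=-bc$ symmetrically. I would also use Lemma~\ref{lemma:moreno-tilde}: it turns $(c,d)$ into $(-d,c)$, so the $ad$ case follows from the $ac$ case with roles swapped. This step is the main obstacle, because pairwise strong alternativity of $a,b,c,d$ is needed for (5),(6). I would first try to derive it from the hypotheses via (3),(4), which show that the associators $[a,c,b]$ etc.\ land back in $\Lin(a,b,c,d)$.

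For item~\eqref{item:orthonormal-system}: by part (5), $a,b,c,d$ are pairwise orthogonal, and they are orthogonal to $1$ since they are pure by part (1). The products $ac$ and $ad$ are pure, because $a\perp c$ with $a,c$ pure gives $ac=-ca$ (Lemma~\ref{lemma:A_n-anticomm}), so $\Re(ac)=\langle a,\bar c\rangle$ up to sign, which is $0$. They have norm $1$ by multiplicativity of the norm on the alternative subalgebra generated by $a,c$. Orthogonality relations such as $\langle ac,a\rangle=\Re(\overline{a}\,ac)$ reduce to $\Re(c)=0$ via alternativity, and similarly for $b$ and $d$ using $ac=bd$. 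For $\langle ac,ad\rangle$ I would compute $\Re(\overline{ac}\,ad)=\Re(\bar c\,\bar a\,a\,d)=\langle c,d\rangle=0$, using the Moufang identity.

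For items~\eqref{item:double-hexagon} and~\eqref{item:linear-independence}, I write the vertices of Figure~\ref{figure:double-hexagon} as pairs built from $a,b,c,d,ac,ad$. For each edge I verify the zero product with the Cayley–Dickson formula $(x,y)(z,w)=(xz-\bar w y,\,wx+y\bar z)$. Conjugating pure elements gives sign changes, and the products reduce by parts (3),(4),(6) and by item~\eqref{item:strong-alternativity}. Lemma~\ref{lemma:moreno-tilde} and the symmetry $xy=0\Leftrightarrow yx=0$ halve the number of checks. Linear independence then follows because the vertex pairs have components in the orthonormal system of item~\eqref{item:orthonormal-system}, so their coordinate matrix is visibly of full rank.
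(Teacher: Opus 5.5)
There is a genuine gap, and it is exactly the step you flag as the main obstacle. The statement assumes only that $a,b$ alternate strongly with $c,d$. It does not assume that $a$ alternates strongly with $b$, or $c$ with $d$. So parts (5) and (6) of Lemma~\ref{lemma:A_n-strongly-alternative-properties} are not available as stated, yet everything after your normalization depends on them:
the quaternion structure of $\Lin(1,a,c,ac)$ needs $a\perp c$; transporting alternativity through $\Lin(1,b,d,bd)$ needs $ac=bd$ and $b\perp d$; item (2) needs $a\perp b$ and $c\perp d$; and items (3), (4) build on (1), (2).

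Your proposed repair, first deriving pairwise strong alternativity from parts (3) and (4), is not carried out and aims at the wrong target. Those identities concern only mixed associators such as $[a,c,b]$ and $\{a,c,b\}$, and there is no evident route from them to $[a,a,b]=0$ or $[c,c,d]=0$. Moreover, nothing in the hypotheses pins down the product $ab$ (in Example~\ref{example:zero-divisors-relation}, $ab$ and $cd$ are even linearly independent). As written, items (1)--(4) all rest on an unproved step.

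The missing idea is that pairwise strong alternativity is never needed: the conclusions of parts (5) and (6) follow directly from the cross relations. Since $a,b,c,d$ are pure, $(a,b)(c,d)=0$ reads $ac=-db$ and $da=bc$. Move factors with Lemma~\ref{lemma:inner-product-movement}, using only cross alternativity plus flexibility.
\begin{itemize}
\item For $a\perp b$: $n(c)\langle a,b\rangle=\langle ac,bc\rangle=\langle ac,da\rangle=\langle \bar d(ac),a\rangle=-\langle \bar d(db),a\rangle=-n(d)\langle a,b\rangle$.
\item For $a\perp c$ and $b\perp d$: $n(d)\langle a,c\rangle=\langle da,dc\rangle=\langle bc,dc\rangle=n(c)\langle b,d\rangle$, while comparing real parts in $ac=-db$ gives $\langle a,c\rangle=-\langle b,d\rangle$.
\item For $a\perp d$ and $b\perp c$: $n(b)\langle a,d\rangle=\langle db,ab\rangle=-\langle ac,ab\rangle=-\langle \bar a(ac),b\rangle=-n(a)\langle b,c\rangle$, together with $\langle a,d\rangle=\langle b,c\rangle$ from real parts of $da=bc$.
\item For $c\perp d$: argue as for $a\perp b$, using $(c,d)(a,b)=0$ (recall $xy=0$ iff $yx=0$ in $\Main_n$).
\end{itemize}
Orthogonal pure elements anticommute by Lemma~\ref{lemma:A_n-anticomm}, so $ac=-db$ and $da=bc$ become $ac=bd$ and $ad=-bc$.

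With these facts in hand, the rest of your plan works. The four quaternion subalgebras $\Lin(1,a,c,ac)$, $\Lin(1,b,d,bd)$, $\Lin(1,a,d,ad)$, $\Lin(1,b,c,bc)$ give item (1) and all the zero products in the double hexagon. One smaller fix: for $\langle ac,ad\rangle$, do not invoke a Moufang identity, which is unavailable in the non-alternative algebra $\Main_{n-1}$. Instead use $\langle ac,ad\rangle=\langle \bar a(ac),d\rangle=n(a)\langle c,d\rangle=0$.
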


\begin{remark} \label{remark:next-pair}
In particular, it follows from Theorem~\ref{theorem:double-hexagon}\eqref{item:double-hexagon} that $(a,b)(c,d) = 0$ implies $(c,d)(ac,ad) = 0$.
\end{remark}

\begin{figure}[H]
\centering
\includegraphics[width=0.55\linewidth]{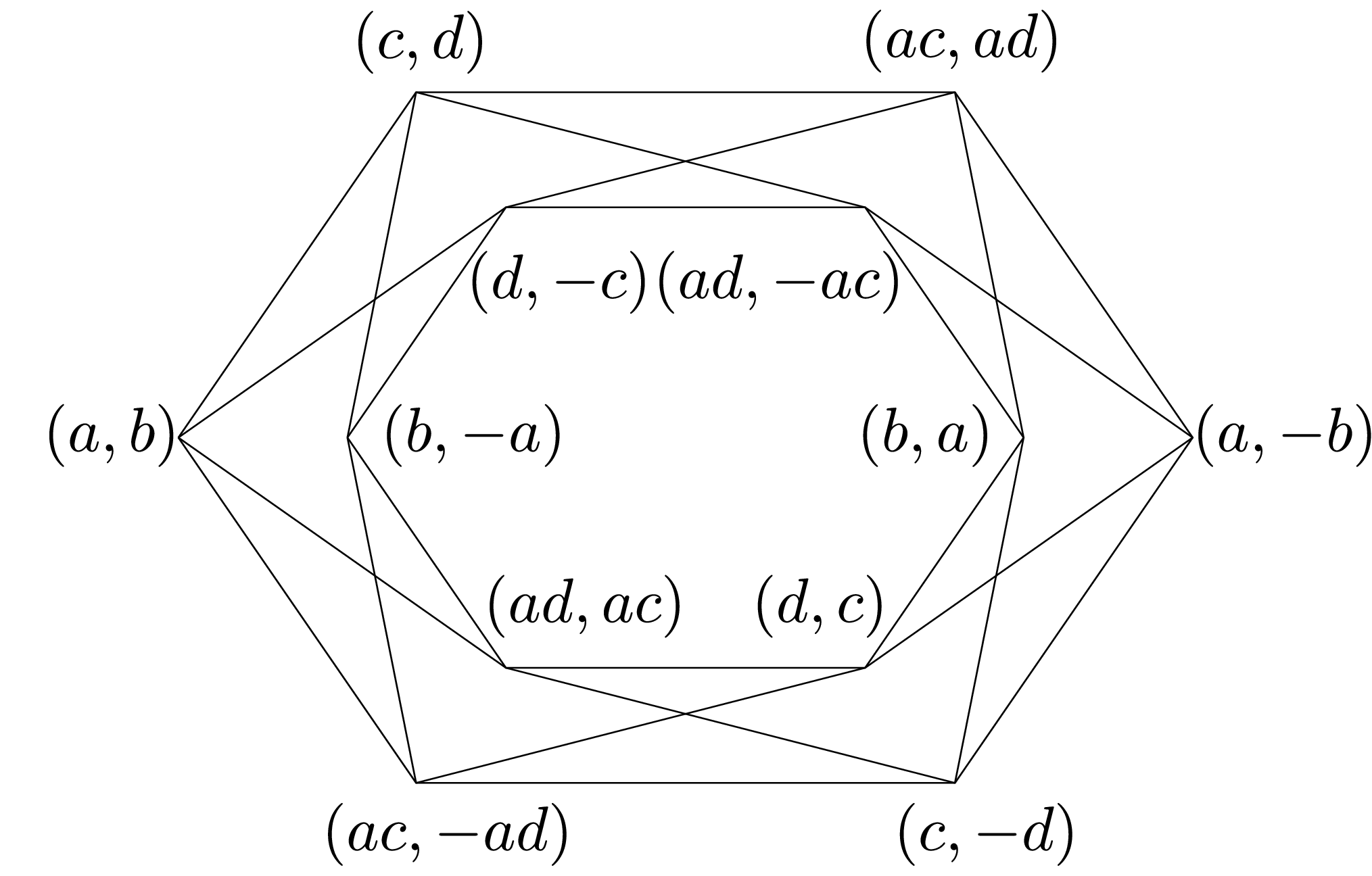}
\caption{\label{figure:double-hexagon} A double hexagon.}
\end{figure}

\section{Orthogonality graph of the sedenions} \label{section:sedenions}

\subsection{Zero divisors and their properties} \label{subsection:sedenion-zero-divisors}

We now consider the case when $\Main_n = \mathbb{S}$. Since $\mathbb{O}$ is alternative, any pair of zero divisors $(a,b),(c,d) \in \mathbb{S}$, $(a,b)(c,d) = 0$, satisfies the conditions of Lemma~\ref{lemma:A_n-strongly-alternative-properties}. In particular, we may always assume that $n(a) = n(b) = n(c) = n(d) = 1$. Then, by Corollary~2.12 and Theorem~2.13~in~\cite{moreno}, there exists an automorphism $\phi$ of $\mathbb{O}$ which maps $a,b,c,d$ to $e_1,e_2,e_7,e_4$, respectively. We extend $\phi$ to the automorphism of $\mathbb{S}$ by the formula $(x,y) \mapsto (\phi(x),\phi(y))$. Then $(a,b)$ and $(c,d)$ are mapped to $(e_1,e_2)$ and $(e_7,e_4)$. Hence we can always replace $(a,b)$ and $(c,d)$ with $(e_1,e_2)$ and $(e_7,e_4)$. We obtain immediately the following proposition.

\begin{proposition} \label{proposition:sedenions-orthonormal-system}
Let $(a,b),(c,d) \in Z(\mathbb{S})$, $(a,b)(c,d) = 0$,  $n(a) = n(b) = n(c) = n(d) = 1$. Then $1,a,b,c,d,ab,ac,ad$ form an orthonormal system with respect to the inner product $\langle \cdot, \cdot \rangle$.
\end{proposition}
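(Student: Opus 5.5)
The plan is to use the reduction described just before the statement. By Corollary~2.12 and Theorem~2.13 in~\cite{moreno}, there is an automorphism $\phi$ of $\mathbb{O}$ with $\phi(a)=e_1$, $\phi(b)=e_2$, $\phi(c)=e_7$, $\phi(d)=e_4$.

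Any automorphism of $\mathbb{O}$ commutes with the involution and fixes $1$, because it preserves the real line and the pure part. Hence it preserves the norm $n(x)=x\bar{x}$. By polarization it then preserves the bilinear form $\langle\cdot,\cdot\rangle$ of Notation~\ref{proposition:lambda-form}, so $\phi$ is an orthogonal map of $\mathbb{O}$. Since $\phi$ is multiplicative,
\[
\phi(ab)=e_1e_2,\qquad \phi(ac)=e_1e_7,\qquad \phi(ad)=e_1e_4 .
\]
Therefore it suffices to check orthonormality of the images $1,e_1,e_2,e_7,e_4,e_1e_2,e_1e_7,e_1e_4$.

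Reading off Table~\ref{table:octonions-mult}, we get $e_1e_2=e_3$, $e_1e_7=e_6$ and $e_1e_4=e_5$. The system thus becomes $1,e_1,e_2,e_7,e_4,e_3,e_6,e_5$, up to signs. This is the full standard basis of $\mathbb{O}$, which is orthonormal by Proposition~\ref{proposition:A_n-euclidean-product}. Applying $\phi^{-1}$, which is also orthogonal, gives the claim.

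One could instead argue intrinsically. Theorem~\ref{theorem:double-hexagon}\eqref{item:orthonormal-system} already gives orthonormality of $1,a,b,c,d,ac,ad$, so only the inner products involving $ab$ would remain. We have $n(ab)=n(a)n(b)=1$, and $ab$ is pure because $a\perp b$ and both are pure. The orthogonality $ab\perp a,b$ would follow from $\langle xy,z\rangle=\langle y,\bar{x}z\rangle$. The identities $\langle ab,ac\rangle=\langle b,c\rangle=0$ and $\langle ab,ad\rangle=0$ hold for the same reason. The remaining products $\langle ab,c\rangle$ and $\langle ab,d\rangle$ could be obtained from Lemma~\ref{lemma:A_n-strongly-alternative-properties}(3)--(4) together with the associator identities.

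The only step requiring any care is justifying that octonion automorphisms preserve $\langle\cdot,\cdot\rangle$, which is standard; the existence of $\phi$ is quoted from Moreno. For that reason I would present the automorphism argument.
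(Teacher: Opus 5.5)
Your proof is correct and follows the paper's route. The paper obtains the proposition ``immediately'' from the automorphism $\phi$ of $\mathbb{O}$ (quoted from Moreno) sending $a,b,c,d$ to $e_1,e_2,e_7,e_4$, and you make explicit the two details it leaves implicit: that $\phi$ preserves $\langle\cdot,\cdot\rangle$, and the table lookups $e_1e_2=e_3$, $e_1e_7=e_6$, $e_1e_4=e_5$.
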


In case of the sedenions, we have one more relation between the components of pairs of zero divisors compared to Lemma~\ref{lemma:A_n-strongly-alternative-properties}. In other algebras, this relation takes a weaker form and holds for base units only.

\begin{lemma} \label{lemma:sedenions-zero-divisors-relation} \label{lemma:basis-pairs-zero-divisors-relation} {\rm \cite[Proposition~3.4(ii)]{chan}, \cite[Lemma~6.2]{our_orthographs1}}
\begin{enumerate}[(1)]
    \item Let $(a,b)(c,d)=0$ in $\mathbb{S}$, $n(a) = n(b) = n(c) = n(d) = 1$. Then $ab = -cd$.
    \item Let $n \geq 1$, $a,b,c,d \in \{ \pm e_m^{(n-1)} \: | \: m = 1, \dots 2^{n-1} - 1\}$, $(a,b),(c,d) \in Z(\A_n)$, $(a,b)(c,d) = 0$. Then $ab = \pm cd$.
\end{enumerate}
\end{lemma}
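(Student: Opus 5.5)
For part (1) I use the automorphism reduction described at the start of Subsection~\ref{subsection:sedenion-zero-divisors}. Since $\mathbb{O}$ is alternative and $n(a)=n(b)=n(c)=n(d)=1$, there is an automorphism $\phi$ of $\mathbb{O}$ with $\phi(a)=e_1$, $\phi(b)=e_2$, $\phi(c)=e_7$, $\phi(d)=e_4$. It extends to the automorphism $\widehat{\phi}$ of $\mathbb{S}$, which acts componentwise. Because $\phi$ is multiplicative on $\mathbb{O}$, we get $\phi(ab)=e_1e_2=e_3$ and $\phi(cd)=e_7e_4=-e_3$ from Table~\ref{table:octonions-mult}. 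Hence $\phi(ab+cd)=0$, and since $\phi$ is injective, $ab=-cd$.

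The only point to check is that the reduction really needs nothing beyond what Lemma~\ref{lemma:A_n-strongly-alternative-properties} provides, namely that Moreno's transitivity result applies to the quadruple $(a,b,c,d)$. I would take this as given, since the paper already states it before Proposition~\ref{proposition:sedenions-orthonormal-system}.

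If a self-contained argument is preferred, I would instead work with Lemma~\ref{lemma:A_n-strongly-alternative-properties}(6), which gives $ac=bd$ and $ad=-bc$, together with the fact that $a,b,c,d$ are pure, orthonormal and pairwise anticommuting. Starting from $d=-\tfrac12[a,c,b]$ (part (3)), I would compute $cd$ and simplify using alternativity and the Moufang identities in $\mathbb{O}$ to reach $-ab$. I expect this computation to be the main obstacle, so I would use the automorphism route.

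For part (2), write $a=\pm e_i$, $b=\pm e_j$, $c=\pm e_k$, $d=\pm e_l$. Products of standard basis elements of $\A_{n-1}$ are again $\pm$ basis elements, with index given by XOR. So it suffices to show $i\oplus j = k\oplus l$. The plan is to expand $(a,b)(c,d)=(ac+\gamma_{n-1}\bar d b,\, da+b\bar c)=0$. The first component gives $e_ie_k=\pm e_le_j$, so $i\oplus k=l\oplus j$, which is equivalent to $i\oplus j=k\oplus l$. Then $ab$ and $cd$ are multiples of the same basis element. Both are $\pm$ that element, since the basis products have unit scalar (the $\gamma$'s are $\pm1$). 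This gives $ab=\pm cd$.
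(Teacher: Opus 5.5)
Your part (1) is the argument the paper relies on. You reduce, via Moreno's transitivity result quoted at the start of Subsection~\ref{subsection:sedenion-zero-divisors}, to the pair $(e_1,e_2),(e_7,e_4)$ and then use $e_1e_2=e_3=-e_7e_4$; this is how the introduction explains \cite[Proposition~3.4(ii)]{chan}. As in the paper, all the content sits in that cited result: the existence of $\phi$ with $c\mapsto e_7$, $d\mapsto e_4$ already encodes $d=c(ab)$, since $e_4=e_7(e_1e_2)$.

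For part (2) the paper gives no argument beyond citing \cite[Lemma~6.2]{our_orthographs1}. Your index argument is correct and self-contained. The first component $ac=-\gamma_{n-1}\bar d b$ of $(a,b)(c,d)=0$ forces $i\oplus k=j\oplus l$, hence $i\oplus j=k\oplus l$. Under the standing convention $\gamma_k=\pm1$, all structure constants are $\pm1$, so $ab=\pm cd$.

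One minor slip in your unused alternative: Lemma~\ref{lemma:A_n-strongly-alternative-properties}(3) gives $d=\tfrac12[a,c,b]$, not $-\tfrac12[a,c,b]$.
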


\begin{example} \label{example:zero-divisors-relation}
Let $(a,b) = (e_1, e_2), (c,d) = (e_5 + e_{13}, e_6 + e_{14}) \in \Main_5$. Then $(a,b)(c,d) = 0$. Moreover, it follows from~\cite[Theorem~3.3]{moreno_alternative} that $a,b,c,d$ are alternative elements in $\Main_4 = \mathbb{S}$. However, $ab = e_3$, $cd = 2e_{11}$, so $ab$ and $cd$ are linearly independent.
\end{example}

\begin{corollary} \label{corollary:sedenions-path}
Let $(a,b), (c,d) \in Z(\mathbb{S})$, $n(a) = n(b) = n(c) = n(d) = 1$. Let also $P$ be a path of length $k$ in $\Gamma_O(\mathbb{S})$ from $(a,b)$ to $(c,d)$. Then $ab = (-1)^k cd$.
\end{corollary}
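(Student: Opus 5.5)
Since $P$ passes through zero divisors of arbitrary norm, I will first normalize each vertex and then induct on $k$, applying Lemma~\ref{lemma:sedenions-zero-divisors-relation}(1) once per edge.

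Let $P$ be $(a,b) = (x_0,y_0), (x_1,y_1), \dots, (x_k,y_k) = (c,d)$, where consecutive vertices are adjacent in $\Gamma_O(\mathbb{S})$. Adjacency means $(x_{i-1},y_{i-1})(x_i,y_i) = 0$. Each $(x_i,y_i)$ is a zero divisor of $\mathbb{S}$, so by Lemma~\ref{lemma:A_n-strongly-alternative-properties}(2) we have $n(x_i) = n(y_i) \neq 0$; this applies because $\mathbb{O}$ is alternative, and each intermediate vertex has an orthogonal neighbour on $P$. Each intermediate vertex is really a line, so we pick a representative and rescale it by a positive real so that $n(x_i) = n(y_i) = 1$. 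Since the endpoints are already normalized, we may take the representatives at $i=0$ and $i=k$ to be exactly $(a,b)$ and $(c,d)$.

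The one subtlety is the sign ambiguity: normalization fixes each representative only up to $\pm 1$. Replacing $(x_i,y_i)$ by $-(x_i,y_i)$ leaves $x_iy_i$ unchanged, since the product picks up $(-1)^2$. Hence the quantity $x_iy_i$ depends only on the vertex, and the choice of sign is harmless. The products between neighbours also remain zero after rescaling, because orthogonality is bilinear.

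Now for each $i = 1, \dots, k$ we have $(x_{i-1},y_{i-1})(x_i,y_i) = 0$ with all four components of norm $1$. Lemma~\ref{lemma:sedenions-zero-divisors-relation}(1) then gives $x_{i-1}y_{i-1} = -x_iy_i$. Chaining these $k$ equalities yields $ab = x_0y_0 = (-1)^k x_ky_k = (-1)^k cd$, as claimed. The argument is essentially routine; the only step needing care is the sign-independence observation above, which justifies treating the vertices of $\Gamma_O(\mathbb{S})$ as lines.
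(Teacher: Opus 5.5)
Your proposal is correct and follows the paper's proof: normalize the inner vertices of $P$ so that both components have norm $1$, then apply Lemma~\ref{lemma:sedenions-zero-divisors-relation}(1) along each edge and induct on $k$. Your remark that the sign of each normalized representative does not affect $x_iy_i$ is a small detail the paper leaves implicit.
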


\begin{proof}
Assume without loss of generality that for any inner element $(x,y) \in P$ we have $n(x) = n(y) = 1$. Then the statement follows from Lemma~\ref{lemma:sedenions-zero-divisors-relation} by induction on~$k$.
\end{proof}

\begin{proposition} \label{proposition:odd-cycles}
\leavevmode
\begin{enumerate}[(1)]
    \item $\Gamma_O(\mathbb{S})$ contains no cycles of odd length.
    \item The girth of $\Gamma_O(\mathbb{S})$ equals 4.
    \item Each maximal clique in $\Gamma_O(\mathbb{S})$ consists of 2 vertices.
\end{enumerate}
\end{proposition}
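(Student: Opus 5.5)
Part (1) follows directly from Corollary~\ref{corollary:sedenions-path}. Suppose $\Gamma_O(\mathbb{S})$ has a cycle of odd length $k$ through a vertex $(a,b)$. Normalize the representative so that $n(a)=n(b)=1$; by Lemma~\ref{lemma:A_n-strongly-alternative-properties}(2) this can always be done for a zero divisor. The cycle is then a closed path of length $k$ from $(a,b)$ to itself, so the corollary gives $ab=(-1)^k ab=-ab$, that is, $ab=0$. But $\mathbb{O}$ has no zero divisors and $a,b\neq 0$, a contradiction. Hence every cycle has even length.

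Part (3) also follows from (1). A clique on three vertices would be a triangle, which is a cycle of length $3$, so every clique has at most $2$ vertices. Conversely, every vertex $(a,b)$ is a zero divisor and so has a neighbour. Indeed, $Z(\mathbb{S})=Z_{LR}(\mathbb{S})$, so $(a,b)x=0$ for some $x\neq 0$, and $x(a,b)=0$ by Moreno's remark. Thus no single vertex is a maximal clique, and every maximal clique consists of exactly $2$ vertices.

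For part (2), part (1) already excludes cycles of length $3$, and there are no cycles of length $1$ or $2$ in a simple graph. So the girth is at least $4$, and it remains to exhibit a $4$-cycle. I would take a double hexagon from Theorem~\ref{theorem:double-hexagon}. By Lemma~\ref{lemma:moreno-tilde}, if $(a,b)(c,d)=0$ then also $(a,b)(-d,c)=0$. Applying the lemma to the orthogonal pair with the roles exchanged (using the symmetry $xy=0 \iff yx=0$), we likewise get $(-b,a)(c,d)=0$ and $(-b,a)(-d,c)=0$. Concretely, take $(a,b)=(e_1,e_2)$ and $(c,d)=(e_7,e_4)$. Then
\[
(e_1,e_2)\;-\;(e_7,e_4)\;-\;(-e_2,e_1)\;-\;(-e_4,e_7)\;-\;(e_1,e_2)
\]
is a closed walk of length $4$. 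Its four vertices are pairwise non-proportional, since they are distinct basis combinations, and all four edges are orthogonality relations. So it is a $4$-cycle, and the girth equals $4$. These edges can also be checked directly from the multiplication formula $(a,b)(c,d)=(ac-\bar d b,\,da+b\bar c)$ and Table~\ref{table:octonions-mult}.

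The main point needing care is how the orthogonality of the pair $(-b,a)$, $(-d,c)$ follows from Lemma~\ref{lemma:moreno-tilde}. The lemma is stated only for the right factor, so I will apply it twice, passing through the symmetry $xy=0\iff yx=0$ in between. If that feels delicate, the direct computation for the explicit basis elements settles it.
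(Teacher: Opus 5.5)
Your proposal is correct and follows the paper's proof essentially step by step. Part (1) uses Corollary~\ref{corollary:sedenions-path} on a closed walk of odd length; part (2) uses the $4$-cycle from Lemma~\ref{lemma:moreno-tilde}, where your vertices $(-b,a)$, $(-d,c)$ are the same lines as the paper's $(b,-a)$, $(d,-c)$; part (3) combines the absence of triangles with the absence of isolated vertices. The only difference is cosmetic: you build the $4$-cycle on the explicit pair $(e_1,e_2)$, $(e_7,e_4)$, where the disjoint supports show directly that the four lines are distinct, while the paper builds it for an arbitrary orthogonal pair.
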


\begin{proof}
\leavevmode
\begin{enumerate}[(1)]
    \item Assume from the contrary that there exists a path $P$ which starts and finishes in $(a,b) \in Z(\mathbb{S})$, $n(a) = n(b) = 1$, and the length of $P$ is an odd number $k$. By Corollary~\ref{corollary:sedenions-path}, we have $ab = (-1)^k ab = -ab$. Hence $ab = 0$, however, $\mathbb{O}$ contains no zero divisors, a contradiction.
    \item We first show that $g(\Gamma_O(\mathbb{S})) \leq 4$. Let $(a,b)(c,d)=0$ where $(a,b), (c,d) \in Z(\mathbb{S})$. Then, by Lemma~\ref{lemma:moreno-tilde}, there exists the following $4$-cycle in $\Gamma_O(\mathbb{S})$:
    $$
    (a,b) \leftrightarrow (c,d) \leftrightarrow (b,-a) \leftrightarrow (d,-c) \leftrightarrow (a,b).
    $$

    Since there are no cycles of odd length in $\Gamma_O(\mathbb{S})$, $g(\Gamma_O(\mathbb{S})) \geq 4$. Thus $g(\Gamma_O(\mathbb{S})) = 4$.
    \item There are no $3$-cycles in $\Gamma_O(\mathbb{S})$. Moreover, $\Gamma_O(\mathbb{S})$ contains no isolated vertices. Thus we obtain the statement required.
\end{enumerate}
\end{proof}

\subsection{Connected components of the orthogonality graph}

In the statements~\ref{proposition:hexagon-basis}~-~\ref{corollary:components-vertex-set} we assume that $(a,b),(c,d) \in Z(\mathbb{S})$, $(a,b)(c,d) = 0$, $n(a) = n(b) = n(c) = n(d) = 1$. Then $(a,b),(c,d)$ satisfy the conditions of Theorem~\ref{theorem:double-hexagon}, and they are contained in a double hexagon from Figure~\ref{figure:double-hexagon}.

\begin{proposition} \label{proposition:hexagon-basis}
Every element of the double hexagon is adjacent to exactly $4$ other elements of this hexagon which form a basis of the orthogonalizer of this element. Particularly,
\begin{align*}
    O_{\mathbb{S}}((a,b)) &= \Lin((c,d), (d,-c), (ac,-ad), (ad,ac)),\\
    O_{\mathbb{S}}((c,d)) &= \Lin((a,b), (b,-a), (ac,ad), (ad,-ac)),\\
    O_{\mathbb{S}}((ac,ad)) &= \Lin((c,d), (d,-c), (a,-b), (b,a)).
\end{align*}
\end{proposition}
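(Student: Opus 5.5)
The plan has two parts: first show that the orthogonalizer of any zero divisor in $\mathbb{S}$ is exactly $4$-dimensional, and then exhibit $4$ linearly independent neighbours inside the double hexagon.

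\textbf{Step 1: reduction and dimension count.} By the automorphism argument preceding Proposition~\ref{proposition:sedenions-orthonormal-system}, we may assume that $(a,b)=(e_1,e_2)$ and $(c,d)=(e_7,e_4)$. Automorphisms preserve orthogonality and linear independence, so it suffices to verify the statement in this concrete case. The map $y\mapsto (a,b)y$ is linear on $\mathbb{S}$, so $O_{\mathbb{S}}((a,b))$ is its kernel. Here we use the Remark that $xy=0$ if and only if $yx=0$ in $\Main_n$. I would write $y=(y_1,y_2)$ and use the multiplication rule $(a,b)(y_1,y_2)=(ay_1-\bar y_2 b,\ y_2a+b\bar y_1)$. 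This is a $16\times16$ real linear system whose rank can be read off from Table~\ref{table:octonions-mult}. The goal is to show that the kernel has dimension exactly $4$.

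A cleaner way to get the dimension: the first equation $ay_1=\bar y_2 b$ determines $y_1=-a(\bar y_2 b)$, since $n(a)=1$ and $\mathbb{O}$ is alternative. Substituting this into the second equation gives a linear condition on $y_2\in\mathbb{O}$. Its kernel dimension is what must be computed, and I expect it to be $4$. This computation, whether done by hand via the table or via the associator identities of Lemma~\ref{lemma:A_n-strongly-alternative-properties}, is the main obstacle.

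\textbf{Step 2: the four neighbours lie in the orthogonalizer and are independent.} Theorem~\ref{theorem:double-hexagon}\eqref{item:double-hexagon} gives the double hexagon, and reading off Figure~\ref{figure:double-hexagon} shows that each vertex has exactly $4$ neighbours among the hexagon's vertices. For $(a,b)$ these are the four elements listed. Lemma~\ref{lemma:moreno-tilde} and Remark~\ref{remark:next-pair} confirm these adjacencies; for instance, $(c,d)\in O((a,b))$ implies $(d,-c)=\pm\til{(c,d)}$-type elements are also orthogonal. By Theorem~\ref{theorem:double-hexagon}\eqref{item:linear-independence}, all vertices of the double hexagon are linearly independent, so these four neighbours are independent. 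Combined with $\dim O_{\mathbb{S}}((a,b))=4$ from Step~1, they form a basis.

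\textbf{Step 3: the remaining vertices.} The same argument applies to $(c,d)$ and $(ac,ad)$, and to the other vertices, which are obtained by applying $\til{\cdot}$ or sign changes. Rather than redo the computation for each, I would note that by Proposition~\ref{proposition:sedenions-orthonormal-system} and Remark~\ref{remark:next-pair}, any adjacent pair of hexagon vertices again satisfies the standing hypotheses. The dimension-$4$ fact from Step~1 then applies to every vertex after an automorphism, because every zero divisor lies in some orthogonal pair. In fact, Step~1 shows that every normalized zero divisor has a $4$-dimensional orthogonalizer, since any such divisor is conjugate to $(e_1,e_2)$ by the Khalil--Yiu transitivity. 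The displayed formulas are then just the neighbour lists read off Figure~\ref{figure:double-hexagon}.
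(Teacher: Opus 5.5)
This is essentially the paper's proof. The listed neighbours lie in the orthogonalizer by Theorem~\ref{theorem:double-hexagon}\eqref{item:double-hexagon} and are independent by Theorem~\ref{theorem:double-hexagon}\eqref{item:linear-independence}. Equality then follows from $\dim O_{\mathbb{S}}((a,b))=4$, which the paper simply quotes from Moreno~\cite[p.~25]{moreno} instead of computing it.

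The computation you left as the main obstacle does go through. Substituting $y_1=-a(\bar y_2 b)$ turns the second equation into $y_2a=b((by_2)a)$. For $(a,b)=(e_1,e_2)$, the map $y_2\mapsto y_2e_1-e_2((e_2y_2)e_1)$ sends $1,e_1,e_2,e_3$ to $2e_1,-2,-2e_3,2e_2$ and vanishes on $e_4,\dots,e_7$. So its kernel, and hence $O_{\mathbb{S}}((e_1,e_2))$, is exactly $4$-dimensional.
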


\begin{proof}
The inclusion from right to left follows immediately from Theorem~\ref{theorem:double-hexagon}\eqref{item:double-hexagon}. The converse inclusion is clear from~\cite[p. 25]{moreno}, since the orthogonalizer of an arbitrary zero divisor in $\mathbb{S}$ has dimension four.
\end{proof}

\begin{notation}
Let us denote
\begin{align*}
\Lambda^+_{(a,b)} &= \Lin^*((a,b), (b,-a), (d,c), (c,-d), (ac,ad), (ad,-ac)),\\
\Lambda^-_{(a,b)} &= \Lin^*((b,a), (a,-b), (c,d), (d,-c), (ad,ac), (ac,-ad)),\\
\Lambda_{(a,b)} &= \Lambda^+_{(a,b)} \cup \Lambda^-_{(a,b)}.
\end{align*}
\end{notation}

$\Lambda_{(a,b)}$ is the set of all nontrivial linear combinations of the elements of the every other angle of the double hexagon in Figure~\ref{figure:double-hexagon}.

\begin{lemma} \label{lemma:components}
$\Lambda_{(a,b)}$ lies at a distance at most $3$ from $(a,b)$ in $\Gamma_O(\mathbb{S})$.
\end{lemma}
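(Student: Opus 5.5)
The plan is to work in explicit coordinates and use the linear-algebraic block structure of the double hexagon. By the discussion at the start of Subsection~\ref{subsection:sedenion-zero-divisors}, there is an automorphism of $\mathbb{S}$ of the form $(x,y)\mapsto(\phi(x),\phi(y))$ sending $(a,b),(c,d)$ to $(e_1,e_2),(e_7,e_4)$. Such an automorphism maps $ac,ad$ to $\phi(a)\phi(c),\phi(a)\phi(d)$ and preserves orthogonality. So I may assume $a=e_1$, $b=e_2$, $c=e_7$, $d=e_4$, and then $ac=e_1e_7=e_6$, $ad=e_1e_4=-e_5$ by Table~\ref{table:octonions-mult}.

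I would group the six generators of $\Lambda^+_{(a,b)}$ into three planes
$$X_1=\Lin((a,b),(b,-a)),\quad X_2=\Lin((d,c),(c,-d)),\quad X_3=\Lin((ac,ad),(ad,-ac)),$$
and those of $\Lambda^-_{(a,b)}$ into
$$Y_1=\Lin((c,d),(d,-c)),\quad Y_2=\Lin((b,a),(a,-b)),\quad Y_3=\Lin((ad,ac),(ac,-ad)).$$
From Theorem~\ref{theorem:double-hexagon}\eqref{item:double-hexagon}, Lemma~\ref{lemma:moreno-tilde} and Proposition~\ref{proposition:hexagon-basis}, each $Y_i$ is orthogonal to $X_j$ for $j\neq i$. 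For example, $Y_1\subseteq O_{\mathbb{S}}((a,b))$, and $(c,d)$ kills $X_3$. I would confirm the remaining pairs directly in the basis $e_1,e_2,e_7,e_4$. Consequently, for $x=x_1+x_2+x_3\in\Lambda^+$ and $y=y_1+y_2+y_3\in\Lambda^-$, the condition $xy=yx=0$ reduces to
$$x_1y_1+x_2y_2+x_3y_3=0 \quad\text{and}\quad y_1x_1+y_2x_2+y_3x_3=0.$$
Both equations are bilinear in $2\times 2$ coordinate blocks, and I would tabulate the twelve products $x_iy_i$ explicitly.

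For an element $x\in\Lambda^+_{(a,b)}$ the target is a path $(a,b)\!-\!u\!-\!v\!-\!x$ (or a shorter one). Parity is consistent with this by Corollary~\ref{corollary:sedenions-path}, since elements of $\Lambda^+$ should be at even distance. The easy case is $x_2=0$: then $x\in X_1+X_3$ is killed by $(c,d)\in Y_1$, which is adjacent to $(a,b)$, giving distance at most $2$. In general I would look for $v\in\Lambda^-$ with $v\perp x$ and $v\in Y_1+Y_3$. Such a $v$ is still orthogonal to $(a,b)$? No, it is not necessary. It suffices that $v$ has a common neighbour with $(a,b)$, and I would take $u\in Y_1$ chosen orthogonal to $v$. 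The rule "$Y_1\perp X_j$ for $j\ne 1$" suggests replacing $v$ by something in $X_2+X_3$ instead. Concretely, the plan is to choose $u\in Y_1$ and $v\in X_2+X_3$ with $u\perp v$, and require $v\perp x$ via the cross-product table. This leaves few unknowns and few equations. For $x\in\Lambda^-$ I would argue symmetrically, aiming for a path of length $1$ or $3$, with $u\in Y_1+Y_3\subseteq O_{\mathbb{S}}((a,b))$ and $v\in\Lambda^+$ orthogonal to both $u$ and $x$.

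The main obstacle I expect is showing that these small bilinear systems always have a nonzero solution for every nonzero $x$. The products $x_iy_i$ land in low-dimensional subspaces, spanned by a handful of standard units $(e_k,\pm e_l)$. So I would compute them explicitly and count dimensions. If needed, I would use the $\mathbb{R}^2\cong\mathbb{C}$ structure on each plane, namely $(p,q)\mapsto(q,-p)$ as multiplication by $i$, coming from Lemma~\ref{lemma:moreno-tilde}. With this structure each product becomes multiplication by a complex scalar, and the conditions become a single complex linear equation in two or three complex unknowns, which always has a nonzero solution.
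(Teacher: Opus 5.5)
There is a genuine gap, and it starts with the incidence pattern of your six planes. With your labels the correct relations are $Y_1\perp X_1+X_3$, $Y_2\perp X_2+X_3$ and $Y_3\perp X_1+X_2$; your own example $Y_1\subseteq O_{\mathbb{S}}((a,b))$ already gives $Y_1\perp X_1$. The non-orthogonal ``opposite'' pairs are $(X_1,Y_2)$, $(X_2,Y_1)$ and $(X_3,Y_3)$; for instance $(c,d)(d,c)=(2cd,0)\neq 0$. So the reduced condition is $x_1y_2+x_2y_1+x_3y_3=0$, not $\sum_i x_iy_i=0$. More seriously, your concrete construction for $x\in\Lambda^+_{(a,b)}$ can never succeed. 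That construction is a path $(a,b)\leftrightarrow u\leftrightarrow v\leftrightarrow x$ with $u\in Y_1$, $v\in X_2+X_3$ and $v\perp x$. It has length $3$, and no two elements of $\Lambda^+_{(a,b)}$ are adjacent. To see this, take a nonzero $(p,q)\in\Lambda^+_{(a,b)}$ and expand using $ab=-ba=dc=-cd=(ac)(ad)=-(ad)(ac)$. All cross terms cancel, giving $pq=\lambda\,ab$ with $\lambda>0$. On the other hand, Lemma~\ref{lemma:sedenions-zero-divisors-relation}(1) forces normalized adjacent zero divisors to have opposite products. So the parity remark you invoke actually rules your path out, and the $\Lambda^+_{(a,b)}$ case is left without a valid argument.

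The option you considered and then dismissed is the right one. For $x\in\Lambda^+_{(a,b)}$, find a common neighbour $u\in Y_1+Y_3=O_{\mathbb{S}}((a,b))$ with $u\perp x$; this gives distance at most $2$. The paper produces such a $u$ without coordinates. For $(x',y')\in O_{\mathbb{S}}((a,b))$, Remark~\ref{remark:next-pair} gives $(x',y')\perp(ax',ay')$. The map $(x',y')\mapsto(ax',ay')$ sends $Y_1+Y_3$ onto $X_2+X_3$, and $X_1\subseteq O_{\mathbb{S}}((x',y'))$. Hence every element $(ax',ay')+w$ of $\Lambda^+_{(a,b)}$, with $w\in X_1$, is adjacent to $(x',y')$ (to any nonzero one if the element lies in $X_1$). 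Every element $(x',y')+w'$ of $\Lambda^-_{(a,b)}$, with $w'\in Y_2$, is reached by $(a,b)\leftrightarrow(x',y')\leftrightarrow(ax',ay')\leftrightarrow(x',y')+w'$, because $Y_2\subseteq O_{\mathbb{S}}((ax',ay'))$. Your path shape for $\Lambda^-_{(a,b)}$ is therefore correct, but the existence of $u$ and $v$ is exactly what must be proved. Your complex-linear idea can also be rescued once the pairing is fixed. By Table~\ref{table:sedenions-mult}, for $(i,j)\in\{(1,6),(2,5),(3,4)\}$ one has $f_if_j=2f_7$, $f_i\til{f_j}=\til{f_i}f_j=-2\til{f_7}$ and $\til{f_i}\til{f_j}=-2f_7$. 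So the cross products are antilinear in each factor, and orthogonality between $\Lambda^+_{(a,b)}$ and $\Lambda^-_{(a,b)}$ becomes $\alpha_1\beta_2+\alpha_2\beta_1+\alpha_3\beta_3=0$ on $\mathbb{C}^3$. One equation in $(\beta_1,\beta_3)$ then settles $\Lambda^+_{(a,b)}$, and two equations in $\alpha\in\mathbb{C}^3$ settle $\Lambda^-_{(a,b)}$. In your proposal, however, this antilinearity is only anticipated, not established. (Minor slip: $e_1e_4=e_5$, not $-e_5$.)
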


\begin{proof}
Consider some $(x,y) \in O_{\mathbb{S}}((a,b))$, $(x,y) = k_1(c,d) + k_2(d,-c) + k_3(ad,ac) + k_4(ac,-ad)$. Then $(ax,ay) = k_1(ac,ad) + k_2(ad,-ac) - k_3(d,c) - k_4(c,-d)$. By Remark~\ref{remark:next-pair}, $(x,y)(ax,ay) = 0$. Moreover, it follows from Figure~\ref{figure:double-hexagon} that $(a,b), (b,-a) \in O_\mathbb{S}((x,y))$ and $(b,a), (a,-b) \in O_\mathbb{S}((ax,ay))$. Hence for any $k_5, k_6 \in \mathbb{R}$ such that the last element of the path is nonzero we have the following paths of length at most $3$:
\begin{itemize}
    \item $(a,b) \longleftrightarrow (x,y) \longleftrightarrow k_5(a,b) + k_6(b,-a)$;
    \item $(a,b) \longleftrightarrow (x,y) \longleftrightarrow (ax,ay) + k_5(a,b) + k_6(b,-a)$;
    \item $(a,b) \longleftrightarrow (x,y) \longleftrightarrow (ax,ay) \longleftrightarrow k_5(b,a) + k_6(a,-b)$;
    \item $(a,b) \longleftrightarrow (x,y) \longleftrightarrow (ax,ay) \longleftrightarrow (x,y) + k_5(b,a) + k_6(a,-b)$.
\end{itemize}
The statement of the lemma follows immediately.
\end{proof}

\begin{lemma} \label{lemma:components-vertex-set}
Let $(x,y) \in Z(\mathbb{S})$ be such that $xy \in \Lin(ab)$. Then $(x,y) \in \Lambda_{(a,b)}$.
\end{lemma}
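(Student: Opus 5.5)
The plan is to parametrize all zero divisors $(x,y)$ with $xy \in \Lin(ab)$ by a single linear map, and to show that the generators of $\Lambda^{\pm}_{(a,b)}$ span its image.

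\textbf{Normalization.} Let $(x,y)\in Z(\mathbb{S})$ with $xy \in \Lin(ab)$. Since $(x,y)$ is a zero divisor, it has some partner $(u,v)$ with $(x,y)(u,v)=0$. Lemma~\ref{lemma:A_n-strongly-alternative-properties} applied to this pair gives that $x,y$ are pure, $n(x)=n(y)\neq 0$, and $\langle x,y\rangle=0$. Since $\Lambda_{(a,b)}$ is closed under nonzero scalar multiples, we may assume $n(x)=n(y)=1$. Orthogonal pure elements anticommute by Lemma~\ref{lemma:A_n-anticomm}, so $xy=-yx$. The element $xy$ is a pure unit, and $ab$ is a pure unit by Proposition~\ref{proposition:sedenions-orthonormal-system}. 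Hence $xy=\pm ab$. If $xy=-ab$, we replace $(x,y)$ by $(y,x)$, which gives $yx=ab$. A direct check shows that the swap $(p,q)\mapsto(q,p)$ maps the six generators of $\Lambda^+_{(a,b)}$ to the generators of $\Lambda^-_{(a,b)}$ up to sign. For example, $(b,-a)\mapsto -(a,-b)$ and $(ad,-ac)\mapsto-(ac,-ad)$. Therefore it suffices to show that $xy=ab$ implies $(x,y)\in\Lambda^+_{(a,b)}$.

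\textbf{Reduction to a linear map.} Suppose $xy=ab$. Since $\mathbb{O}$ is alternative and $n(x)=1$, we get $y=\bar{x}(xy)=\bar{x}(ab)=-x(ab)$. Also $\langle x, xy\rangle = n(x)\langle 1,y\rangle=0$, so $x$ lies in the $6$-dimensional subspace
$$
V=\{\, z\in\mathbb{O} \;|\; \Re(z)=0,\ \langle z,ab\rangle=0 \,\}.
$$
Hence $(x,y)$ lies in the image of the injective linear map $\psi\colon V\to\mathbb{S}$, $\psi(z)=(z,-z(ab))$. This image is a $6$-dimensional space.

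\textbf{Matching the generators.} It remains to show that the six generators of $\Lambda^+_{(a,b)}$ lie in $\psi(V)$. Their first components are $a,b,d,c,ac,ad$. By Proposition~\ref{proposition:sedenions-orthonormal-system} these are orthonormal, pure, and orthogonal to $ab$, so they lie in $V$ and form a basis of it. For each generator we then verify that the second component equals $-(\text{first})(ab)$. This uses alternativity of $\mathbb{O}$, anticommutativity of orthogonal pure elements, and the relation $ab=-cd$ from Lemma~\ref{lemma:sedenions-zero-divisors-relation}. Two of the checks are:
$$
-a(ab)=-(aa)b=b, \qquad -d(ab)=d(cd)=-d(dc)=c.
$$
The remaining checks, for $(b,-a)$, $(c,-d)$, $(ac,ad)$ and $(ad,-ac)$, are of the same kind. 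For the last two we also use $ab=-cd$ and Moufang-type identities in $\mathbb{O}$, together with $ac\perp ad$ and the anticommutation relations. Once these checks are done, $\psi(V)$ equals the span of the generators of $\Lambda^+_{(a,b)}$. Since $(x,y)\neq0$, it follows that $(x,y)\in\Lambda^+_{(a,b)}$.

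\textbf{Main obstacle.} The hardest step is the verification for the generators $(ac,ad)$ and $(ad,-ac)$, since it involves triple products of octonions. If that computation becomes messy, I would instead use the automorphism reduction from Subsection~\ref{subsection:sedenion-zero-divisors}: take $(a,b)=(e_1,e_2)$ and $(c,d)=(e_7,e_4)$, so $ab=e_3$, $ac=e_6$, $ad=e_5$. Then check each generator directly against Table~\ref{table:octonions-mult}. For instance, $-e_6e_3=e_5$ matches $(ac,ad)=(e_6,e_5)$.
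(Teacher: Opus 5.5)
Your proof is correct and follows the paper's argument. You normalize, obtain $xy=\pm ab$ and $x\perp\Lin(1,ab)$, and then use alternativity to write $y=\mp x(ab)$, so that $(x,y)$ lies in the image of a linear map defined on $\Lin(a,b,c,d,ac,ad)$. You also make explicit two things the paper leaves implicit: the generators of $\Lambda^{+}_{(a,b)}$ are exactly the elements $(z,-z(ab))$ for $z$ in that basis (your remaining checks go through, e.g.\ $-(ac)(ab)=-(ac)((ac)(ad))=ad$ using $ab=(ac)(ad)$), and the case $xy=-ab$ is handled by the swap $(x,y)\mapsto(y,x)$ rather than by a second map $z\mapsto(z,z(ab))$.
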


\begin{proof}
By Proposition~\ref{proposition:sedenions-orthonormal-system}, the elements $1,a,b,c,d,ab,ac,ad$ form an orthonormal system with respect to the inner product $\langle \cdot, \cdot \rangle$. Assume without loss of generality that $n(x) = n(y) = 1$. Similarly, we obtain that $1,x,y,xy$ form an orthonormal system, so $xy = \pm ab$ and $x, y \in \Lin(a,b,c,d,ac,ad)$. Then if we choose $x \in \Lin(a,b,c,d,ac,ad)$, $n(x) = 1$, the second component $y = (\bar{x}x)y = -x(xy) = \mp x(ab)$ is determined up to sign. Thus we have $(x,y) \in \Lambda_{(a,b)}$.
\end{proof}

\begin{corollary} \label{corollary:components-vertex-set}
Let $C$ denote the connected component of $\Gamma_O(\mathbb{S})$ which contains $(a,b)$. Then the vertex set of $C$ is $P(\Lambda_{(a,b)})$.
\end{corollary}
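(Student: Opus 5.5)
We prove the two inclusions $V(C) \subseteq P(\Lambda_{(a,b)})$ and $P(\Lambda_{(a,b)}) \subseteq V(C)$ separately.

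\emph{The inclusion $P(\Lambda_{(a,b)}) \subseteq V(C)$.} By Lemma~\ref{lemma:components}, every element of $\Lambda_{(a,b)}$ is joined to $(a,b)$ by a path of length at most $3$ in $\Gamma_O(\mathbb{S})$. Hence every line through an element of $\Lambda_{(a,b)}$ lies in $C$.

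\emph{The inclusion $V(C) \subseteq P(\Lambda_{(a,b)})$.} Take a vertex $(x,y)$ of $C$ and normalize it so that $n(x)=n(y)=1$; this is possible by Lemma~\ref{lemma:A_n-strongly-alternative-properties}, since $n(x)=n(y)$ for every zero divisor of $\mathbb{S}$. Since $(x,y)$ lies in $C$, there is a path of some length $k$ from $(a,b)$ to $(x,y)$. Normalize all inner vertices of this path in the same way. Corollary~\ref{corollary:sedenions-path} then gives $xy = (-1)^k ab \in \Lin(ab)$, and Lemma~\ref{lemma:components-vertex-set} yields $(x,y) \in \Lambda_{(a,b)}$. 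Scaling back, every vertex of $C$ is a line through an element of $\Lambda_{(a,b)}$.

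\emph{A subtlety.} Strictly, an element of $\Lambda_{(a,b)}$ is only a nonzero linear combination. So for the first inclusion we must check that such a combination is actually a zero divisor, so that its line is a vertex of $\Gamma_O(\mathbb{S})$. This is automatic: the paths in Lemma~\ref{lemma:components} show that each such element is orthogonal to some nonzero element, so it lies in $Z(\mathbb{S})$.

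The main obstacle is the normalization step in the second inclusion. We need that along any path every vertex admits the representative with $n(x)=n(y)=1$, so that Corollary~\ref{corollary:sedenions-path} applies. This is exactly what Lemma~\ref{lemma:A_n-strongly-alternative-properties} provides, because octonions are alternative.
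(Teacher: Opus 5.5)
Your proof is correct and follows the paper's argument exactly. The inclusion $P(\Lambda_{(a,b)}) \subseteq V(C)$ comes from Lemma~\ref{lemma:components}, and the reverse inclusion comes from Corollary~\ref{corollary:sedenions-path} (giving $xy \in \Lin(ab)$) combined with Lemma~\ref{lemma:components-vertex-set}; your extra remarks on normalizing vertices and on elements of $\Lambda_{(a,b)}$ being zero divisors are valid and spell out details the paper leaves implicit.
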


\begin{proof}
The inclusion from right to left is proved in Lemma~\ref{lemma:components}. Let now $(x,y) \in C$. By Corollary~\ref{corollary:sedenions-path}, $xy \in \Lin(ab)$. Then Lemma~\ref{lemma:components-vertex-set} implies that $(x,y) \in \Lambda_{(a,b)}$.
\end{proof}

\begin{theorem} \label{theorem:sedenions-orthogonality}
The diameter of each connected component of $\Gamma_O(\mathbb{S})$ equals 3.
\end{theorem}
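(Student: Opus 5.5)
The plan is to prove the upper bound $d\le 3$ from the description of the vertex set just obtained, and then to produce an explicit pair of vertices at distance exactly $3$, using the parity information from Corollary~\ref{corollary:sedenions-path}.

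\emph{Upper bound.} Let $C$ be a connected component and let $(a,b)$ be any vertex of $C$, normalized so that $n(a)=n(b)=1$. Since $\Gamma_O(\mathbb{S})$ has no isolated vertices, $(a,b)$ has some neighbour $(c,d)$, which we normalize in the same way. Thus the standing assumptions of this subsection hold for $(a,b),(c,d)$. By Corollary~\ref{corollary:components-vertex-set}, the vertex set of $C$ is $P(\Lambda_{(a,b)})$. By Lemma~\ref{lemma:components}, every element of $\Lambda_{(a,b)}$ lies at distance at most $3$ from $(a,b)$. Because $(a,b)$ was an arbitrary vertex of $C$, every vertex has eccentricity at most $3$, and hence $d(C)\le 3$.

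\emph{Lower bound.} Fix $(a,b)$ and $(c,d)$ as above. I will show that the vertex $(b,a)$, which lies in $\Lambda^-_{(a,b)}$ and is a vertex of the double hexagon, is at distance exactly $3$ from $(a,b)$.

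First, a parity computation. Both $(a,b)$ and $(b,a)$ have unit-norm components, and
$$
ba=-ab
$$
because $a,b$ are orthogonal pure octonions. By Corollary~\ref{corollary:sedenions-path}, any path from $(a,b)$ to $(b,a)$ of length $k$ satisfies $ab=(-1)^k ba=(-1)^{k+1}ab$. Since $ab\neq 0$, this forces $k$ to be odd. In particular $(b,a)$ is a different vertex from $(a,b)$, so the distance is $1$ or at least $3$.

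Second, I rule out distance $1$. By Proposition~\ref{proposition:hexagon-basis},
$$
O_{\mathbb{S}}((a,b))=\Lin((c,d),(d,-c),(ac,-ad),(ad,ac)).
$$
By Theorem~\ref{theorem:double-hexagon}\eqref{item:linear-independence}, the elements at the vertices of the double hexagon, $(b,a)$ among them, are linearly independent. So $(b,a)$ does not lie in this span, and $(a,b)$ and $(b,a)$ are not adjacent.

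Combining the two steps gives $d((a,b),(b,a))\ge 3$, and therefore $d(C)=3$.

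The only delicate point is making sure that the two vertices named in the lower bound really lie in the same component and are genuinely distinct non-adjacent vertices. Membership in $C$ follows from Lemma~\ref{lemma:components}. Non-adjacency rests on the linear independence in Theorem~\ref{theorem:double-hexagon}, specifically on $(b,a)$ being one of the listed double-hexagon vertices.
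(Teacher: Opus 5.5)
Your proof is correct and essentially follows the paper's argument: the upper bound comes from Corollary~\ref{corollary:components-vertex-set} and Lemma~\ref{lemma:components} exactly as in the paper, and the lower bound uses the same pair $(a,b)$, $(b,a)$. The only difference is that you rule out distance $2$ by parity (Corollary~\ref{corollary:sedenions-path} with $ba=-ab$) and use the linear independence from Theorem~\ref{theorem:double-hexagon}\eqref{item:linear-independence} only to exclude adjacency, whereas the paper uses that independence to get $O_{\mathbb{S}}((a,b))\cap O_{\mathbb{S}}((b,a))=0$ (no common neighbour) and leaves non-adjacency implicit.
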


\begin{proof}
Let $(a,b) \in Z(\mathbb{S})$. Then there exists $(c,d) \in Z(\mathbb{S})$ such that $(a,b)(c,d)=0$. Assume without loss of generality that $n(a) = n(b) = n(c) = n(d) = 1$. Let us denote the connected component of $\Gamma_O(\mathbb{S})$ which contains $(a,b)$ and $(c,d)$ by $C$. By Corollary~\ref{corollary:components-vertex-set}, the vertex set of $C$ is $P(\Lambda_{(a,b)})$. Furthermore, it follows from Lemma~\ref{lemma:components} that $\Lambda_{(a,b)}$ lies at a distance at most $3$ from $(a,b)$ in $\Gamma_O(\mathbb{S})$.

Note that Theorem~\ref{theorem:double-hexagon}\eqref{item:linear-independence} implies that $O_\mathbb{S}((a,b)) \cap O_\mathbb{S}((b,a)) = 0$, so $d((a,b),(b,a)) = 3$.

Since $(a,b)$ is arbitrary, the diameter of each connected component equals 3.
\end{proof}

\begin{lemma} {\rm \cite[Proposition 12.1]{biss}} \label{lemma:sedenions-zero-divisor-pairs}
Let $a, b \in \mathbb{O}$, $(a,b) \in \mathbb{S} \setminus \{ 0 \}$. Then $(a,b) \in Z(\mathbb{S})$ if and only if the following conditions are satisfied:
\begin{enumerate}[(1)]
\item $n(a) = n(b)$;
\item $1,a,b$ are orthogonal with respect to the inner product $\langle \cdot, \cdot \rangle$.
\end{enumerate}
\end{lemma}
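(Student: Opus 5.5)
We prove the two implications separately. Both rest on the reduction to $(e_1,e_2)$ and $(e_7,e_4)$ by an automorphism of $\mathbb{O}$, and on the explicit multiplication rule $(a,b)(c,d) = (ac - \bar d b,\, da + b\bar c)$ in $\mathbb{S}$.

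\emph{Necessity.} Let $(a,b)\in Z(\mathbb{S})$, $(a,b)\neq 0$. Choose $(c,d)\neq 0$ with $(a,b)(c,d)=0$. Since $\mathbb{O}$ is alternative, Lemma~\ref{lemma:A_n-strongly-alternative-properties} applies. Part (1) gives $\Re(a)=\Re(b)=0$, that is, $\langle a,1\rangle=\langle b,1\rangle=0$. Part (2) gives $n(a)=n(b)$. Neither component can vanish: if, say, $b=0$, then $n(a)=n(b)=0$, so $a=0$ as well, a contradiction. After rescaling we may take $n(a)=n(b)=1$. It remains to show $\langle a,b\rangle=0$, and here Proposition~\ref{proposition:sedenions-orthonormal-system} gives exactly that $1,a,b$ are orthonormal. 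Alternatively, one can use item (5) of Lemma~\ref{lemma:A_n-strongly-alternative-properties}, which applies because any four octonions alternate strongly pairwise. Necessity is therefore routine.

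\emph{Sufficiency.} Suppose $n(a)=n(b)\neq 0$ (nonzero, since $(a,b)\neq 0$) and $1,a,b$ are pairwise orthogonal. Rescale so that $n(a)=n(b)=1$. Then $a,b$ are orthonormal pure octonions, and so are $a,b,ab$. This forms a quaternionic triple, and I will extend it to a basic triple: pick a unit pure $c$ orthogonal to $1,a,b,ab$. By the standard transitivity of $\Aut_{\mathbb{R}}(\mathbb{O})$ on basic triples (the same fact underlying Moreno's Corollary~2.12 and Theorem~2.13 cited above), there is $\phi\in\Aut_{\mathbb{R}}(\mathbb{O})$ with $\phi(a)=e_1$ and $\phi(b)=e_2$. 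Extending $\phi$ to $\widehat\phi$ on $\mathbb{S}$ componentwise gives an automorphism, so it suffices to show that $(e_1,e_2)$ is a zero divisor. This is a direct check with Table~\ref{table:octonions-mult}:
\[
(e_1,e_2)(e_7,e_4) = (e_1e_7 + e_4e_2,\; e_4e_1 - e_2e_7) = (e_6 - e_6,\; -e_5 + e_5) = 0 .
\]

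\emph{Main obstacle.} The only nontrivial point is justifying the transitivity step: that an orthonormal pair of pure octonions can be carried to $(e_1,e_2)$ by an automorphism. If we prefer not to cite it, we can avoid automorphisms altogether. Since $b\perp 1,a$, the element $ab$ is pure and orthogonal to $1,a,b$. Take any unit pure $c\perp 1,a,b,ab$ and set $d$ to be a suitable multiple of $a(bc)$ or of $(ab)c$, modelled on $e_7 = e_1(e_2 e_4)$-type relations. Then verify $(a,b)(c,d)=0$ by rewriting each product with the alternative-law identities $x(xy)=-y$ and $xy=-yx$ for orthogonal pure units, together with the Moufang identities. The first thing I would try is the automorphism argument, with the explicit verification as a fallback.
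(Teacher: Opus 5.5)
Your proof is correct. The paper gives no argument for this lemma; it imports it from Biss--Dugger--Isaksen \cite[Proposition~12.1]{biss}. So your proof is a genuinely different, self-contained route built from tools already stated in the paper.

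\textbf{Necessity.} This is exactly items (1), (2) and (5) of Lemma~\ref{lemma:A_n-strongly-alternative-properties}, since any four octonions alternate strongly pairwise. Two small points should be tidied.
\begin{itemize}
\item $Z(\mathbb{S})$ allows one-sided zero divisors, so you should invoke Moreno's $xy=0 \Leftrightarrow yx=0$ to obtain a partner $(c,d)$ on the right.
\item If you route through Proposition~\ref{proposition:sedenions-orthonormal-system}, you must normalize $(c,d)$ as well. Using item (5) directly avoids this.
\end{itemize}

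\textbf{Sufficiency.} The fact you need is that $\Aut_{\mathbb{R}}(\mathbb{O})$ acts transitively on orthonormal pairs of pure unit octonions (equivalently, on basic triples). This is standard, but cite it directly. What the paper extracts from Moreno is applied only to quadruples $a,b,c,d$ already known to come from an annihilating pair, so it does not literally give what you need.

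\textbf{Your fallback.} It works exactly as you sketch and is shorter than the automorphism argument. Take a unit pure $c\perp\Lin(1,a,b,ab)$ and put $d=a(bc)$. Lemma~\ref{lemma:inner-product-movement} shows that $d$ is a pure unit orthogonal to $a$ and $b$; this is where $c\perp ab$, $c\perp a$, $c\perp b$ are used. Then:
\begin{itemize}
\item By left alternativity, $da=-a(a(bc))=bc$.
\item By flexibility, $bab=(ba)b=-(ab)b=a$, so the left Moufang identity gives $bd=b(a(bc))=(bab)c=ac$.
\end{itemize}
Hence
\[
(a,b)(c,d)=(ac+db,\; da-bc)=(ac-bd,\; da-bc)=0.
\]
On $(e_1,e_2)$ with $c=e_7$ this reproduces $d=e_4$.

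\textbf{Comparison.} The citation buys brevity. Your route, especially this explicit version, exhibits an annihilating partner for every pair satisfying (1)--(2). That is precisely the input the paper relies on when it invokes this lemma in the surjectivity part of Theorem~\ref{theorem:component-octonion}.
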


\begin{lemma} {\rm \cite[Lemma 1.3]{moreno}, \cite[Lemma 4.2, Lemma 4.8]{our_split-algebras}} \label{lemma:inner-product-movement} \label{lemma:alternative-elements-are-normed}
\begin{enumerate}[(1)]
    \item Let $a, b, c \in \A_n$. Then $\langle a, bc \rangle = \langle a\bar{c}, b \rangle = \langle \bar{b}a, c \rangle$. \label{item:inner-product-movement}
    \item Let $a, b \in \A_n$, $a$ alternates with $b$. Then $n(ab) = n(ba) = n(a)n(b)$. \label{item:alternative-elements-are-normed}
\end{enumerate}
\end{lemma}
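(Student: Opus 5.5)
The plan is to prove part~\eqref{item:inner-product-movement} first, from two trace identities for the real part, and then obtain part~\eqref{item:alternative-elements-are-normed} from~\eqref{item:inner-product-movement} together with the quadratic equation satisfied by $a$.

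\textbf{Part~\eqref{item:inner-product-movement}.} By Notation~\ref{proposition:lambda-form}, $\langle x,y\rangle = \Re(x\bar y)$, since $2\langle x,y\rangle = x\bar y+ y\bar x = x\bar y + \overline{x\bar y}$. Then $\langle a,bc\rangle = \Re(a(\bar c\bar b))$. I will use two identities valid in every $\A_n$:
\[
\Re(xy)=\Re(yx), \qquad \Re((xy)z)=\Re(x(yz)).
\]
Granting them, the first equality is
\[
\Re(a(\bar c\bar b))=\Re((a\bar c)\bar b)=\langle a\bar c,b\rangle,
\]
and the second is the chain
\[
\Re(a(\bar c\bar b))=\Re((\bar c\bar b)a)=\Re(\bar c(\bar b a))=\Re((\bar b a)\bar c)=\langle \bar b a,c\rangle .
\]

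Both identities are proved by induction on $n$ via the Cayley--Dickson product formula and $\Re((a,b))=\Re(a)$. For commutativity of the trace, $\Re((a,b)(c,d))=\Re(ac)+\gamma\Re(\bar d b)$. Swapping the factors gives $\Re(ca)+\gamma\Re(\bar b d)$, and these agree by induction and $\Re(\bar x)=\Re(x)$.

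For the associativity of the trace, I expand both $((a,b)(c,d))(e,f)$ and $(a,b)((c,d)(e,f))$ and take first components. Each side becomes a sum of four real parts of triple products in $\A_{n-1}$, with coefficients $1$ or $\gamma$. I then match the terms pairwise using the induction hypothesis (both identities), $\Re(\bar x)=\Re(x)$, and $\overline{xy}=\bar y\bar x$. I expect this bookkeeping to be the main obstacle. It is routine but sign-sensitive, so I will check the matching term by term.

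\textbf{Part~\eqref{item:alternative-elements-are-normed}.} By~\eqref{item:inner-product-movement}, $n(ab)=\langle ab,ab\rangle=\langle \bar a(ab),b\rangle$. Since $\bar a = 2\Re(a)-a$ and $[a,a,b]=0$,
\[
\bar a(ab)=2\Re(a)\,ab-(aa)b=(2\Re(a)a-a^2)b=(a\bar a)b=n(a)b,
\]
where I used $a^2-2\Re(a)a+n(a)=0$, which follows from $a+\bar a=2\Re(a)$ and $a\bar a=n(a)$. Hence $n(ab)=n(a)\langle b,b\rangle = n(a)n(b)$.

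For $ba$, flexibility and Proposition~\ref{proposition:skew-symm-associator} give $[b,a,a]=-[a,a,b]=0$. Then~\eqref{item:inner-product-movement} gives $n(ba)=\langle (ba)\bar a,b\rangle$. Symmetrically,
\[
(ba)\bar a=2\Re(a)\,ba-b(aa)=b(a\bar a)=n(a)b,
\]
so $n(ba)=n(a)n(b)$.
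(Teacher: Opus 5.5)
Your proof is correct. The paper gives no argument of its own for this lemma; it only cites Moreno and the authors' earlier work on split algebras, so your proposal is a self-contained substitute rather than a variant of something in the text. Your route has three advantages. Everything reduces to the two trace identities $\Re(xy)=\Re(yx)$ and $\Re((xy)z)=\Re(x(yz))$. The parameter $\gamma$ enters the inductive step only as a common coefficient, so all $\A_n\{\gamma_0,\dots,\gamma_{n-1}\}$ are covered at once, as the statement requires. And part~(2) visibly needs only $[a,a,b]=0$, with flexibility supplying $[b,a,a]=0$. The bookkeeping you postponed does close up. For $x=(a,b)$, $y=(c,d)$, $z=(e,f)$, the real parts of the first components of $(xy)z$ and $x(yz)$ are, respectively,
\begin{align*}
&\Re((ac)e)+\gamma\bigl[\Re((\bar d b)e)+\Re(\bar f(da))+\Re(\bar f(b\bar c))\bigr],\\
&\Re(a(ce))+\gamma\bigl[\Re((e\bar d)b)+\Re(a(\bar f d))+\Re((\bar c\bar f)b)\bigr].
\end{align*}
They agree term by term in the order written, for example $\Re((\bar d b)e)=\Re(e(\bar d b))=\Re((e\bar d)b)$ and $\Re(\bar f(b\bar c))=\Re((\bar f b)\bar c)=\Re(\bar c(\bar f b))=\Re((\bar c\bar f)b)$. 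Since commutativity at level $n-1$ is used here, the two identities must be carried through the induction jointly, as you intend. The separate induction for $\Re(xy)=\Re(yx)$ can be dropped: the two expressions for $2\langle x,\bar y\rangle$ in Notation~\ref{proposition:lambda-form} are $xy+\overline{xy}=2\Re(xy)$ and $\overline{yx}+yx=2\Re(yx)$.
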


\begin{notation}
Let $\Gamma$ be an undirected graph. Then $\mathcal{C}(\Gamma)$ denotes the set of all connected components of~$\Gamma$.
\end{notation}

\begin{theorem} \label{theorem:component-octonion}
Let $P(\Im(\mathbb{O}))$ denote the set of all one-dimensional subspaces in $\Im(\mathbb{O})$. Then there exists a well-defined bijection $\psi: \mathcal{C}(\Gamma_O(\mathbb{S})) \rightarrow P(\Im(\mathbb{O}))$ which acts as follows. Let $C \in \mathcal{C}(\Gamma_O(\mathbb{S}))$, $(a,b) \in C$. Then $\psi(C) = \Lin(ab)$.
\end{theorem}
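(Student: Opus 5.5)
We have to check three things: that $\psi$ is well-defined and lands in $P(\Im(\mathbb{O}))$, that it is injective, and that it is surjective.

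\emph{Well-definedness.} Take $(a,b) \in C$ and scale it so that $n(a) = n(b) = 1$; this is possible by Lemma~\ref{lemma:sedenions-zero-divisor-pairs} or Lemma~\ref{lemma:A_n-strongly-alternative-properties}. By Lemma~\ref{lemma:sedenions-zero-divisor-pairs}, the elements $1,a,b$ are pairwise orthogonal. Lemma~\ref{lemma:inner-product-movement}\eqref{item:inner-product-movement} then gives
$$
\langle ab, 1 \rangle = \langle b, \bar{a} \rangle = -\langle b, a \rangle = 0,
$$
so $ab$ is pure. Since $\mathbb{O}$ has no zero divisors, $ab \neq 0$, and hence $\Lin(ab)$ is a line in $\Im(\mathbb{O})$. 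Rescaling $(a,b)$ by $\lambda$ multiplies $ab$ by $\lambda^2$, which does not change the line. Now let $(x,y) \in C$ be another vertex. Corollary~\ref{corollary:sedenions-path} gives $xy = \pm ab$ after normalization, so the line does not depend on the choice of representative in $C$.

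\emph{Injectivity.} Suppose $C, C'$ are components with representatives $(a,b) \in C$ and $(x,y) \in C'$, normalized as above, and suppose $\Lin(xy) = \Lin(ab)$. Every zero divisor in $\mathbb{S}$ has a nonzero orthogonalizer, so there is $(c,d)$ with $(a,b)(c,d) = 0$, normalized. This puts us in the standing assumptions of Lemma~\ref{lemma:components-vertex-set}, which yields $(x,y) \in \Lambda_{(a,b)}$. By Corollary~\ref{corollary:components-vertex-set}, $(x,y)$ therefore lies in $C$, so $C = C'$.

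\emph{Surjectivity.} This is the only step requiring a construction. Given a unit $u \in \Im(\mathbb{O})$, we need a zero divisor $(a,b)$ with $ab \in \Lin(u)$. Choose a unit pure $a$ orthogonal to $u$, and set $b = -au$. Then $ab = -a(au) = -(aa)u = u$, using alternativity of $\mathbb{O}$ and $a^2 = -1$.

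It remains to check the conditions of Lemma~\ref{lemma:sedenions-zero-divisor-pairs}.
\begin{itemize}
\item Norms: $n(b) = n(a)n(u) = 1 = n(a)$, by Lemma~\ref{lemma:alternative-elements-are-normed}\eqref{item:alternative-elements-are-normed}.
\item $b$ is pure: $\langle b, 1 \rangle = -\langle au, 1 \rangle = -\langle u, \bar{a} \rangle = \langle u, a \rangle = 0$, by Lemma~\ref{lemma:inner-product-movement}.
\item $b \perp a$: $\langle au, a \rangle = \langle u, \bar{a}a \rangle = \langle u, 1 \rangle = 0$, again by Lemma~\ref{lemma:inner-product-movement}, since $u$ is pure.
\end{itemize}
Hence $(a,b) \in Z(\mathbb{S})$ and $\psi$ of its component is $\Lin(u)$.

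The main care needed is in these inner-product manipulations and the sign bookkeeping. Alternatively, surjectivity follows from transitivity: $\Aut_{\mathbb{R}}(\mathbb{O})$ acts transitively on unit pure octonions, and the extended automorphisms $\widehat{\phi}$ carry $(e_1,e_2)$, whose product is $e_3$, to a zero divisor with product any prescribed unit pure octonion.
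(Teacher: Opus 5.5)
Your proof is correct and follows the paper's argument. Well-definedness comes from Corollary~\ref{corollary:sedenions-path}, injectivity from Lemma~\ref{lemma:components-vertex-set} together with Corollary~\ref{corollary:components-vertex-set}, and surjectivity from the same construction verified through Lemma~\ref{lemma:sedenions-zero-divisor-pairs}: your $b=-au$ is exactly the paper's $b=\bar{a}u$, since $a$ is pure. Your explicit check that $ab$ is a nonzero pure octonion is a small detail the paper leaves implicit.
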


\begin{proof}
By Corollary~\ref{corollary:sedenions-path}, $\psi$ is well-defined. By Lemma~\ref{lemma:components-vertex-set}, $\psi$ is injective. We now show that $\psi$ is surjective.

Consider some $X \in P(\Im(\mathbb{O}))$. Then $X = \Lin(x)$ for some $x \in \Im(\mathbb{O})$, $n(x) = 1$. Let now $a \perp \Lin(1,x)$, $n(a) = 1$, $b = \bar{a}x$. Then $ab = a(\bar{a}x) = (a\bar{a})x = x$. Lemma~\ref{lemma:sedenions-zero-divisor-pairs} implies that $(a,b) \in Z(\mathbb{S})$:
\begin{enumerate}[(1)]
    \item by Lemma~\ref{lemma:alternative-elements-are-normed}\eqref{item:alternative-elements-are-normed}, $n(b) = n(\bar{a})n(x) = n(a)$;
    \item it follows from Lemma~\ref{lemma:inner-product-movement}\eqref{item:inner-product-movement} that $\langle 1, b \rangle = \langle a, x \rangle = 0$, $\langle a, b \rangle = \langle a^2, x \rangle = - \langle 1, x \rangle = 0$, so $1, a, b$ are orthogonal.
\end{enumerate}
Let $C \in \mathcal{C}(\Gamma_O(\mathbb{S}))$ be such that $(a,b) \in C$. Then $\psi(C) = X$.
\end{proof}

\begin{corollary}
Let $\phi \in \Aut_{\mathbb{R}}(\mathbb{O})$ generate an automorphism $\widehat{\phi} \in \Aut_{\mathbb{R}}(\mathbb{S})$ such that $\widehat{\phi}((x,y)) = (\phi(x), \phi(y))$ for all $(x,y) \in \mathbb{S}$. Then $\widehat{\phi}$ acts naturally on $\mathcal{C}(\Gamma_O(\mathbb{S}))$, and $\psi \circ \widehat{\phi} = \phi \circ \psi$.
\end{corollary}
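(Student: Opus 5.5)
The corollary has two parts. First, $\widehat{\phi}$ permutes the connected components of $\Gamma_O(\mathbb{S})$. Second, this permutation is intertwined with the natural action of $\phi$ on $P(\Im(\mathbb{O}))$ via $\psi$. The plan is to check first that $\widehat{\phi}$ is an automorphism of $\mathbb{S}$ and hence a graph automorphism, and then to compute $\psi$ on one representative.

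Step one: $\widehat{\phi} \in \Aut_{\mathbb{R}}(\mathbb{S})$. It is clearly a linear bijection. Every automorphism of $\mathbb{O}$ fixes $1$ and preserves $\Im(\mathbb{O})$, since $\Im(\mathbb{O})$ consists of the elements $x$ with $x^2 \in \mathbb{R}_{\le 0}$ that are not real. It follows that $\phi$ commutes with the involution, $\phi(\bar{x}) = \overline{\phi(x)}$. Applying this to the Cayley-Dickson product $(a,b)(c,d) = (ac - \bar{d}b, da + b\bar{c})$ gives $\widehat{\phi}((a,b)(c,d)) = \widehat{\phi}((a,b))\,\widehat{\phi}((c,d))$. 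The paper also takes this as known from the introduction.

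Step two: the action on the graph. As observed in Subsection~\ref{subsection:definitions}, automorphisms preserve zero divisors and orthogonal pairs, and they map lines to lines. Hence $\widehat{\phi}$ induces an automorphism of $\Gamma_O(\mathbb{S})$: it is a bijection on vertices because $\widehat{\phi}^{-1} = \widehat{\phi^{-1}}$ behaves the same way, and it preserves adjacency. Graph automorphisms map connected components onto connected components. So $C \mapsto \widehat{\phi}(C)$ is a well-defined permutation of $\mathcal{C}(\Gamma_O(\mathbb{S}))$. Likewise, $\phi$ restricts to a linear bijection of $\Im(\mathbb{O})$ and therefore acts on $P(\Im(\mathbb{O}))$ by $\Lin(x) \mapsto \Lin(\phi(x))$.

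Step three: the intertwining. Take $C$ and choose $(a,b) \in C$. Then $(\phi(a),\phi(b)) \in \widehat{\phi}(C)$. By Theorem~\ref{theorem:component-octonion}, which says $\psi$ is well defined and independent of the representative,
$$\psi(\widehat{\phi}(C)) = \Lin(\phi(a)\phi(b)) = \Lin(\phi(ab)) = \phi(\Lin(ab)) = \phi(\psi(C)).$$

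The only point needing care is the commutation of $\phi$ with conjugation, which is used in Step one. Since $\phi$ is linear and preserves $1$, this reduces to showing $\phi(\Im \mathbb{O}) = \Im \mathbb{O}$. That follows from the characterization of the pure octonions as the elements $x \notin \mathbb{R}\setminus\{0\}$ with $x^2 \in \mathbb{R}$, a property any automorphism preserves.
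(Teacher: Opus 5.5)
Your proof is correct and follows the same route as the paper. Both arguments use that $\widehat{\phi}$ preserves orthogonal pairs and therefore sends components to components, and both then compute $\psi(\widehat{\phi}(C)) = \Lin(\phi(a)\phi(b)) = \Lin(\phi(ab)) = \phi(\psi(C))$; your additional checks (that $\widehat{\phi}$ is an automorphism, that it permutes components bijectively, and that $\phi(\Im(\mathbb{O})) = \Im(\mathbb{O})$ so $\phi \circ \psi$ makes sense) are details the paper leaves implicit.
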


\begin{proof}
Note that $\widehat{\phi}$ preserves pairs of orthogonal elements and thus maps connected components of $\Gamma_O(\mathbb{S})$ into connected components. Let $C \in \mathcal{C}(\Gamma_O(\mathbb{S}))$, $(a,b) \in C$. Then $(\phi(a), \phi(b)) = \widehat{\phi}((a,b)) \in \widehat{\phi}(C)$, so $\psi(\widehat{\phi}(C)) = \Lin(\phi(a) \phi(b)) = \Lin(\phi(ab)) = \phi(\Lin(ab)) = \phi(\psi(C))$. 
\end{proof}

\subsection{The subgraph of $\Gamma_O(\mathbb{S})$ on the elements of the form $\left[\left(e_i, \pm e_j\right)\right]$} \label{subsection:basis-pairs}

\begin{notation}
We will need the following subsets of $Z(\mathbb{S})$:
\begin{align*}
Z_e &= \left\{ \left(e_i, \pm e_j\right) \in Z(\mathbb{S}) \right\},\\
Z_e^+ &= \left\{ \left(e_i, e_j\right) \in Z(\mathbb{S}) \right\},\\
Z_e^- &= \left\{ \left(e_i, -e_j\right) \in Z(\mathbb{S}) \right\}.
\end{align*}

Then $\Gamma_e(\mathbb{S})$ denotes a subgraph of $\Gamma_O(\mathbb{S})$ on the vertex set $P(Z_e)$.
\end{notation}

\begin{descript}
$\Gamma_e(\mathbb{S})$ is depicted in Figure~\ref{figure:subgraphs} based on~\cite{cawagas,marrais}. 

$\Gamma_e(\mathbb{S})$ is disconnected and consists of 7 double hexagons. For convenience, we assume that outer elements belong to $Z_e^+$, while inner ones belong to $Z_e^-$. Let double hexagons in Figure~\ref{figure:subgraphs} be numbered $1, \dots, 7$ from left to right and from top to bottom. We denote the $k$th double hexagon by $H_k$.
\end{descript}

\begin{figure}
\centering
\includegraphics[width=\linewidth]{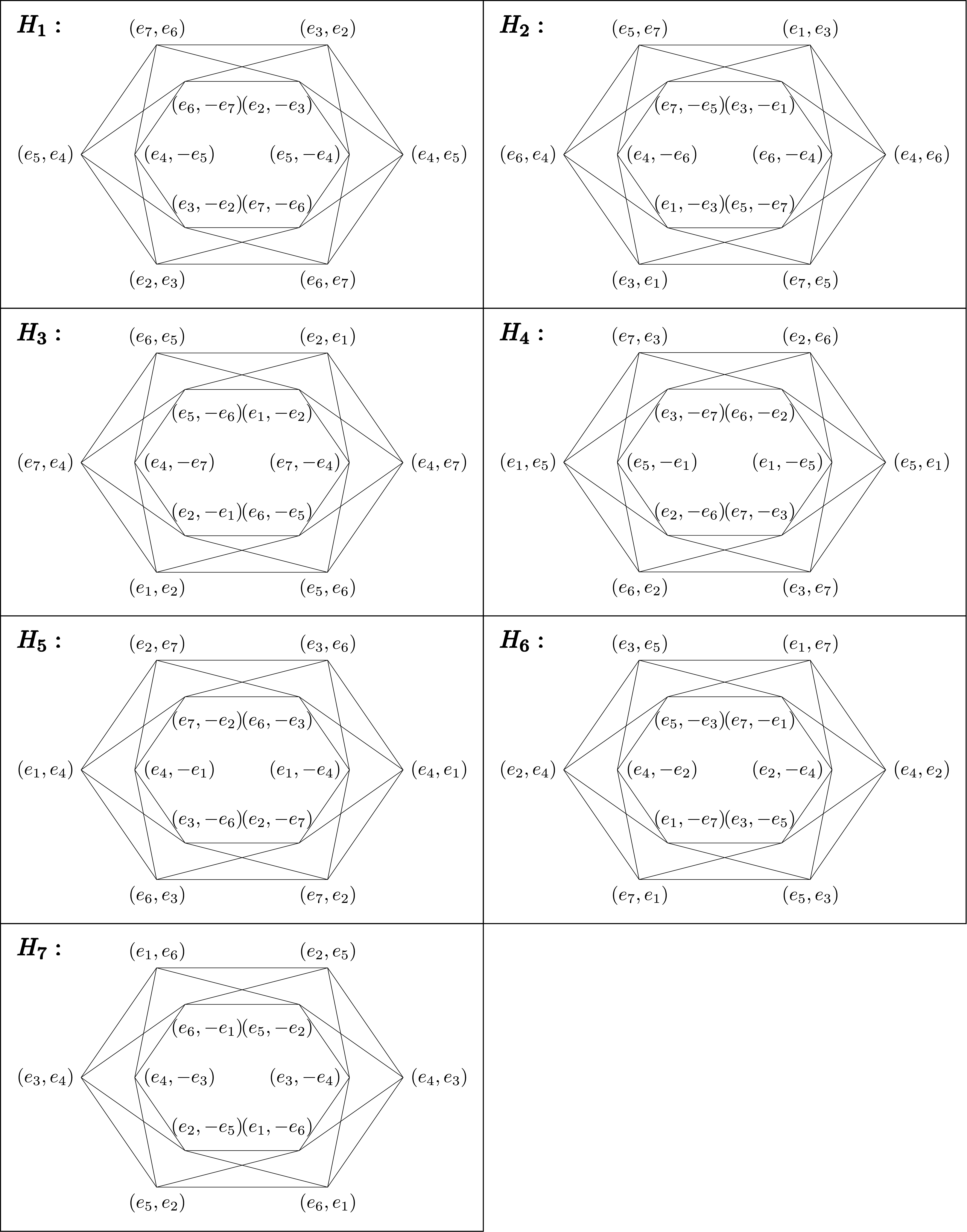}
\caption{\label{figure:subgraphs} The subgraph of $\Gamma_O(\mathbb{S})$ on the elements of the form $\left[\left(e_i, \pm e_j\right)\right]$.}
\end{figure}

\begin{proposition} \label{proposition:graph-simple-properties}
There are several properties of $\Gamma_e(\mathbb{S})$ which are listed below:
\begin{enumerate}[(1)]
    \item $(e_i, \pm e_j) \in Z(\mathbb{S})$ if and only if $i \neq j$;
    
    \item $(e_i,e_j)(e_k,e_l)=0$ if and only if $(e_i,-e_j)(e_k,-e_l)=0$,
    
    $(e_i,e_j)(e_k,-e_l)=0$ if and only if $(e_i,-e_j)(e_k,e_l)=0$;
    
    \item For all $k \in \{ 1, \dots, 7 \}$ $H_k$ is indeed a double hexagon from Figure~\ref{figure:double-hexagon}, rearranged so that outer elements belong to $Z_e^+$ and inner ones belong to $Z_e^-$;
    
    \item Each double angle of a double hexagon contains a pair of elements $(e_i,e_j)$ and $(e_j,-e_i)$. The opposite one contains the elements $(e_j,e_i)$ and $(e_i,-e_j)$;
    
    \item Let $(e_i, \pm e_j) \in H_k$. Then $e_i e_j = \pm e_k$. \label{item:component-octonion}
\end{enumerate}
\end{proposition}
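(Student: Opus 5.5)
We verify the five items one by one. Each rests either on Lemma~\ref{lemma:sedenions-zero-divisor-pairs} or on the explicit multiplication rule in $\mathbb{S} = \mathbb{O}\{-1\}$, namely $(a,b)(c,d) = (ac - \bar{d}b,\, da + b\bar{c})$, applied to base units of $\mathbb{O}$.

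For item (1), take $a = e_i$ and $b = \pm e_j$. Then $n(a) = n(b) = 1$. The elements $1, e_i, \pm e_j$ are orthogonal exactly when $i, j \geq 1$ and $i \neq j$, and Lemma~\ref{lemma:sedenions-zero-divisor-pairs} gives the claim. Here the index $0$ is excluded automatically, since a zero divisor is doubly pure.

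For item (2), we use the map $\sigma: (x,y) \mapsto (x,-y)$. We compute
$$(a,-b)(c,-d) = (ac - \bar{d}b,\, -(da + b\bar{c})).$$
This is $\sigma$ applied to $(a,b)(c,d)$, so one product vanishes if and only if the other does. Both stated equivalences follow from this, the second by taking $(c,-d)$ in place of $(c,d)$. Equivalently, $\sigma$ is an automorphism of $\mathbb{S}$, being conjugation by $(0,1)$ up to sign; we only need the displayed identity, which is immediate.

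Items (3) and (4) are the main obstacle, and we prove them by structured enumeration rather than a raw check of $7 \cdot 6 \cdot 2$ vertices. We apply Lemma~\ref{lemma:sedenions-zero-divisors-relation}(1) and Corollary~\ref{corollary:sedenions-path}: adjacent vertices $(e_i, \pm e_j)$ satisfy $e_i(\pm e_j) = -(\text{the product for the neighbour})$. Hence every connected piece of $\Gamma_e(\mathbb{S})$ lies in a single component, labelled by a line $\Lin(e_k)$ in $\psi$ of Theorem~\ref{theorem:component-octonion}. This suggests indexing the pieces by $k = 1, \dots, 7$.

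Fix $k$. There are exactly $6$ ordered pairs $(i,j)$ with $e_ie_j = \pm e_k$, namely $3$ unordered pairs, each taken in two orders and with two signs. This gives $12$ vertices, which is the size of a double hexagon. Take one such pair $(a,b) = (e_i, e_j)$ and pick $(c,d) = (e_p, \pm e_q)$ with $e_pe_q = \mp e_ie_j$. Lemma~\ref{lemma:sedenions-zero-divisors-relation}(1) is necessary for this choice, and the product can be checked directly from Table~\ref{table:octonions-mult}.

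Theorem~\ref{theorem:double-hexagon} then produces the whole double hexagon from $a, b, c, d, ac, ad$. All of its entries are signed base units, because products of base units in $\mathbb{O}$ are signed base units. These are the $12$ vertices above, so $H_k$ is a double hexagon. By Corollary~\ref{corollary:components-vertex-set} no further vertex of $Z_e$ lies in the same component. Placing the $Z_e^+$ elements outside is a relabelling, justified by item (2), which swaps the two sign classes consistently.

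For item (4), we read off Proposition~\ref{proposition:hexagon-basis}. The vertices $(a,b)$ and $(b,-a)$ share a double angle, since they have the same orthogonalizer. The vertices $(b,a)$ and $(a,-b)$ sit at the opposite angle, by the notation $\Lambda^{\pm}$ and the distance-$3$ computation in Theorem~\ref{theorem:sedenions-orthogonality}.

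Item (5) is then the labelling itself: $H_k = \psi^{-1}(\Lin(e_k))$ restricted to $Z_e$, so $e_i(\pm e_j) \in \Lin(e_k)$ and has unit norm, which gives $e_ie_j = \pm e_k$. We expect the only real work to be matching this labelling to the numbering in Figure~\ref{figure:subgraphs}, which we would simply verify by a single product in each $H_k$.
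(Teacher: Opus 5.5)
Your proposal is correct in substance. For items (1) and (5) it matches the paper (Lemma~\ref{lemma:sedenions-zero-divisor-pairs}, resp.\ Theorem~\ref{theorem:component-octonion} plus renumbering). For items (2)--(4) you take a different route.

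The paper gets (2) from Theorem~\ref{theorem:double-hexagon}, which presupposes that one of the two products already vanishes. Your identity $(a,-b)(c,-d)=\sigma\bigl((a,b)(c,d)\bigr)$ is instead a one-line computation valid for arbitrary $a,b,c,d$ in any Cayley--Dickson double, so it is both more elementary and more general.

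For (3) the paper only appeals to direct calculation. You use the component theory of Section~\ref{section:sedenions} instead:
\begin{itemize}
\item each line $\Lin(e_k)$ carries exactly $12$ candidate vertices $[(e_p,\pm e_q)]$;
\item one adjacency per $k$ is checked from Table~\ref{table:octonions-mult};
\item Theorem~\ref{theorem:double-hexagon} together with Corollary~\ref{corollary:sedenions-path} shows that the resulting double hexagon exhausts these candidates.
\end{itemize}
This cuts the verification to seven products and explains why $\Gamma_e(\mathbb{S})$ splits into exactly seven pieces. The price is that it depends on the earlier structural results rather than on the table alone.

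Three details need repair.

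First, placing $Z_e^+$ on the outer hexagon does not follow from item (2). The map $\sigma$ sends $(a,b)$ to $(a,-b)$, which lies in the double angle opposite to $(a,b)$. So $\sigma$ acts as the antipodal map on angles and says nothing about how the two sign classes are distributed inside a single angle. What you need is item (4). The two vertices of an angle are the twins $(x,y)$ and $(y,-x)$, which have the same orthogonalizer. The twin of $(e_i,\varepsilon e_j)$ is the line of $(e_j,-\varepsilon e_i)$, which lies in the other sign class. Swapping twins is therefore a graph automorphism that puts the $Z_e^+$ vertex outside. Prove (4) before this step.

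Second, to conclude that the induced subgraph on the $12$ vertices is exactly a double hexagon, also cite Proposition~\ref{proposition:hexagon-basis}. It says each vertex has exactly four neighbours inside the hexagon, which rules out extra edges.

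Third, your aside that $\sigma$ is conjugation by $(0,1)$ up to sign is wrong. Conjugating by $(0,1)$ gives $(x,y)\mapsto(\bar x,\bar y)$, which is not $\pm\sigma$. Since you only use the displayed identity, this does not affect the argument.
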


\begin{proof}
\leavevmode
\begin{enumerate}[(1)]
    \item Follows from Lemma~\ref{lemma:sedenions-zero-divisor-pairs}.
    
    \item Follows immediately from Theorem~\ref{theorem:double-hexagon}.
    
    \item One can verify this by direct calculations.
    
    \item Follows from the structure of the double hexagon in Figure~\ref{figure:double-hexagon}.
    
    \item Let $(e_i, e_j)$ be an arbitrary element within $H_k$. Then it follows from Theorem~\ref{theorem:component-octonion} that for all $(e_l, e_m) \in H_k$ we have $e_l e_m \in \Lin^*(e_i e_j)$. Hence $e_l e_m = \pm e_i e_j$. By renumbering double hexagons we can easily obtain $e_i e_j = \pm e_k$ for all $(e_i, \pm e_j) \in H_k$.
\end{enumerate}
\end{proof}

\begin{proposition}
Distinct connected components of $\Gamma_e(\mathbb{S})$ are contained in distinct connected components of $\Gamma_O(\mathbb{S})$.
\end{proposition}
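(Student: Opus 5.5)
The plan is to use the invariant $\psi$ from Theorem~\ref{theorem:component-octonion}. To each connected component $C$ of $\Gamma_O(\mathbb{S})$ it assigns the line $\Lin(xy)$, where $(x,y)$ is any vertex of $C$. By Corollary~\ref{corollary:sedenions-path} this line is the same for every vertex of $C$, so two vertices whose invariant lines differ cannot lie in the same component of $\Gamma_O(\mathbb{S})$. I will therefore show that different double hexagons $H_k$ carry different invariant lines.

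First, note that each $H_k$ is connected, since it is a double hexagon by Proposition~\ref{proposition:graph-simple-properties}. Hence $H_k$ is contained in a single connected component $C_k$ of $\Gamma_O(\mathbb{S})$. It is enough to prove that $C_k \neq C_l$ whenever $k \neq l$.

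Take vertices $(e_i, \pm e_j) \in H_k$ and $(e_p, \pm e_q) \in H_l$. All basis octonions have norm $1$, so Corollary~\ref{corollary:sedenions-path} applies to these representatives. By Proposition~\ref{proposition:graph-simple-properties}\eqref{item:component-octonion}, $e_i(\pm e_j) = \pm e_k$ and $e_p(\pm e_q) = \pm e_l$. Therefore $\psi(C_k) = \Lin(e_k)$ and $\psi(C_l) = \Lin(e_l)$. For $k \neq l$ these lines are distinct, because $e_k$ and $e_l$ are linearly independent. Since $\psi$ is a well-defined map on components, $C_k \neq C_l$.

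The only delicate point is the numbering in Proposition~\ref{proposition:graph-simple-properties}\eqref{item:component-octonion}. That item was obtained "by renumbering", and for the argument we need the labels $k$ to be distinct across the seven hexagons. If they were not, two hexagons could share a line $\Lin(e_k)$. So I would check explicitly that the seven hexagons correspond bijectively to $e_1,\dots,e_7$. One way is to count: each $H_k$ has 12 vertices, so the seven hexagons give 84 vertices. On the other hand, each imaginary unit $e_k$ arises as $\pm e_ie_j$ from exactly 6 ordered pairs $(i,j)$, and each pair appears with both signs, giving 12 vertices per line. Since the hexagons partition the vertices of $\Gamma_e(\mathbb{S})$, each line $\Lin(e_k)$ must occur for exactly one hexagon. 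Alternatively, one can read the lines off Figure~\ref{figure:subgraphs} directly.
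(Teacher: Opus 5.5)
Your proof is correct and is essentially the paper's argument. The paper also obtains the claim directly from the invariant $\psi$ of Theorem~\ref{theorem:component-octonion}, of which only the well-definedness (Corollary~\ref{corollary:sedenions-path}) is actually used, combined with Proposition~\ref{proposition:graph-simple-properties}\eqref{item:component-octonion}. Your counting check, comparing the $12$ vertices of each hexagon with the $12$ vertices $(e_i,\pm e_j)$ satisfying $e_ie_j\in\Lin(e_m)$ for each fixed $m$, is a sound justification of the ``renumbering'' step that the paper leaves implicit.
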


\begin{proof}
Follows immediately from Theorem~\ref{theorem:component-octonion} and Proposition~\ref{proposition:graph-simple-properties}\hyperref[item:component-octonion]{(\ref{item:component-octonion})}.
\end{proof}

\section{The multiplication table of the vertices of a double hexagon}

\begin{lemma} \label{lemma:mult-product}
Let $1, x, y$ be an orthonormal system in $\mathbb{O}$. Then we have in $\mathbb{S}$
\begin{align*}
    (x,y)(0,xy) &= -(x,-y),\\
    (x,y)(xy,0) &= -(y,x).
\end{align*}
\end{lemma}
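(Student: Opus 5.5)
Both identities follow by expanding the Cayley-Dickson product $(a,b)(c,d)=(ac-\bar d b,\ da+b\bar c)$ (here $\gamma=-1$) and then simplifying the octonionic products with alternativity. From the hypothesis that $1,x,y$ is orthonormal we read off $\bar x=-x$, $\bar y=-y$, $x^2=y^2=-1$. By Lemma~\ref{lemma:A_n-anticomm}, $xy=-yx$. Moreover $\langle 1, xy\rangle=\langle \bar x, y\rangle=-\langle x,y\rangle=0$ by Lemma~\ref{lemma:inner-product-movement}\eqref{item:inner-product-movement}, so $xy$ is pure and $\overline{xy}=-xy$.

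For the first identity take $c=0$, $d=xy$. Then
$$(x,y)(0,xy)=\bigl(-\overline{xy}\,y,\ (xy)x\bigr)=\bigl((xy)y,\ (xy)x\bigr).$$
Alternativity (right alternative law) gives $(xy)y=x(yy)=-x$. For the second component, $xy=-yx$ gives $(xy)x=-(yx)x=-y(xx)=y$. Hence the product is $(-x,y)=-(x,-y)$, as claimed.

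For the second identity take $c=xy$, $d=0$. Then
$$(x,y)(xy,0)=\bigl(x(xy),\ y\,\overline{xy}\bigr)=\bigl(x(xy),\ -y(xy)\bigr).$$
The left alternative law gives $x(xy)=(xx)y=-y$. For the second component, $y(xy)=-y(yx)=-(yy)x=x$, so $-y(xy)=-x$. Hence the product is $(-y,-x)=-(y,x)$.

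The only point needing care is the sign bookkeeping: the $\gamma=-1$ term in the product formula, and the conjugation of $xy$, which relies on $xy$ being pure. Beyond that, both identities are direct consequences of alternativity of $\mathbb{O}$ together with the anticommutation of orthogonal pure elements.
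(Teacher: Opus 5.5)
Your proof is correct and follows the paper's argument: expand the Cayley--Dickson product with $\gamma=-1$, use $\overline{xy}=-xy$, the anticommutation $xy=-yx$ from Lemma~\ref{lemma:A_n-anticomm}, and the left and right alternative laws in $\mathbb{O}$. Your sign bookkeeping is cleaner than the paper's. The paper's last step in the second display reads $-(x,y)$, whereas $((xx)y,(yy)x)=(-y,-x)=-(y,x)$, which is what you correctly obtain.
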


\begin{proof}
By Lemma~\ref{lemma:A_n-anticomm}, the elements $x$ and $y$ anticommute. Since $x$ and $y$ are pure, we have $x^2 = -n(x) = -1$ and $y^2 = -n(y) = -1$. By the alternativity of $\mathbb{O}$, we obtain
\begin{align*}
    (x,y)(0,xy) &= (-\overline{(xy)}y, (xy)x) = ((xy)y, -(yx)x) = (x(yy), -y(xx)) = -(x,-y),\\
    (x,y)(xy,0) &= (x(xy), y\overline{(xy)}) = (x(xy), y(yx)) = ((xx)y, (yy)x) = -(x,y).
\end{align*}
\end{proof}

Let $(a,b),(c,d) \in Z(\mathbb{S})$, $(a,b)(c,d) = 0$, $n(a) = n(b) = n(c) = n(d) = 1$. Then $(a,b),(c,d)$ satisfy the conditions of Theorem~\ref{theorem:double-hexagon}, so Figure~\ref{figure:double-hexagon} represents a double hexagon in $\Gamma_O(\mathbb{S})$ which contains $(a,b)$ and $(c,d)$. The vertices of this hexagon can be completed to a basis of $\mathbb{S}$ which has a neat multiplication table.

\begin{theorem} \label{theorem:neat-sedenions-basis}
Let
\begin{align*}
    f_0 &= (1,0), & f_4 &= (ac,-ad), & \til{f_0} &= (0,1), & \til{f_4} &= (ad,ac),\\
    f_1 &= (a,b), & f_5 &= (c,d), & \til{f_1} &= (-b,a), & \til{f_5} &= (-d,c),\\
    f_2 &= (c,-d), & f_6 &= (a,-b), & \til{f_2} &= (d,c), & \til{f_6} &= (b,a),\\
    f_3 &= (ac,ad), & f_7 &= (0,ab), & \til{f_3} &= (-ad,ac), & \til{f_7} &= (-ab,0).
\end{align*}
Then $F = \{ f_m, \til{f_m} \: | \: m = 0, \dots, 7 \}$ is a basis in $\mathbb{S}$ which is orthogonal with respect to the inner product $\langle \cdot, \cdot \rangle$. Its multiplication table is given in Table~\ref{table:sedenions-mult}.
\end{theorem}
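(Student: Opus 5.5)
The plan is to reduce everything to a single model case. By the discussion at the beginning of Subsection~\ref{subsection:sedenion-zero-divisors}, there is an automorphism $\phi$ of $\mathbb{O}$ with $\phi(a)=e_1$, $\phi(b)=e_2$, $\phi(c)=e_7$, $\phi(d)=e_4$. Its extension $\widehat{\phi}$ to $\mathbb{S}$ is an algebra automorphism. It also preserves the inner product, because $\phi$ commutes with the involution and therefore preserves the norm.

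Each $f_m$ and $\til{f_m}$ is a fixed expression built from $a,b,c,d$ using products and pairing. Hence $\widehat{\phi}$ sends $F$ to the set obtained from the same formulas with $a,b,c,d$ replaced by $e_1,e_2,e_7,e_4$. Here $ab=e_3$, $ac=e_1e_7=e_6$, $ad=e_1e_4=-e_5$. The structure constants of $F$ are therefore exactly those of the image basis. So both orthogonality and the multiplication table only need to be checked in this one concrete case.

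Orthogonality can also be seen without coordinates. By Proposition~\ref{proposition:sedenions-orthonormal-system}, the elements $1,a,b,c,d,ab,ac,ad$ are orthonormal in $\mathbb{O}$. Each of the sixteen vectors has its two components in $\Lin$ of at most two of these, and $\langle (x,y),(z,w)\rangle=\langle x,z\rangle+\langle y,w\rangle$ by Proposition~\ref{proposition:A_n-euclidean-product}. For pairs sharing the same two octonion directions, such as $f_1=(a,b)$, $f_6=(a,-b)$, $\til{f_1}=(-b,a)$, $\til{f_6}=(b,a)$, the inner products cancel by inspection. Sixteen nonzero pairwise orthogonal vectors in a $16$-dimensional Euclidean space form a basis.

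For the table itself, I would work structurally rather than expanding all $256$ products blindly:
\begin{itemize}
\item Products of $f_0$ with anything are trivial.
\item Products between $(a,b)$, $(c,d)$, $(ac,ad)$ and their partners that are zero come from the double hexagon (Theorem~\ref{theorem:double-hexagon}, Proposition~\ref{proposition:hexagon-basis}) and Lemma~\ref{lemma:moreno-tilde}.
\item Products with $f_7=(0,ab)$ and $\til{f_7}$ follow from Lemma~\ref{lemma:mult-product} applied to the orthonormal systems $1,a,b$, as well as $1,c,d$ and $1,ac,ad$. The last two are usable because $cd=-ab$ by Lemma~\ref{lemma:sedenions-zero-divisors-relation}, and $(ac)(ad)$ is also $\pm ab$, since $(ac,ad)$ lies in the same component (Corollary~\ref{corollary:sedenions-path}).
\item The remaining nonzero products, for instance $f_1f_6$ and $f_1f_2$, reduce via the Cayley--Dickson formula to octonion products among $a,b,c,d,ac,ad,ab$. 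These are evaluated with the relations of Lemma~\ref{lemma:A_n-strongly-alternative-properties}: $ac=bd$, $ad=-bc$, the associator identities, and anticommutation of orthogonal pure elements (Lemma~\ref{lemma:A_n-anticomm}).
\end{itemize}

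The main obstacle is the sheer bookkeeping of signs across all entries. My first approach is to use the automorphism reduction and compute the entries in the basis $e_1,\dots,e_{15}$ directly from Table~\ref{table:octonions-mult}, e.g.\ with computer algebra. I would then spot-check several entries against the structural derivation above so that the table is not merely numerical.
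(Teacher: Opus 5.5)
Your proof is correct, but it reaches the table by a different route from the paper. The paper stays coordinate-free. After the same orthogonality argument you give, it proceeds in four steps:
\begin{enumerate}
\item It uses $(x,y)\til{f_0}=(-y,x)$ to get $f_m\til{f_0}=\til{f_m}$ and $\til{f_m}\til{f_0}=-f_m$.
\item It applies Lemma~\ref{lemma:sedenions-zero-divisors-relation} to the adjacent pairs $(a,b),(c,d)$ and $(c,d),(ac,ad)$ to obtain the exact chain $ab=-ba=dc=-cd=(ac)(ad)=-(ad)(ac)$.
\item It reads off the entries for $f_m,\til{f_m}$, $m\le 6$, from the general double-hexagon multiplication table of \cite[Theorem~4.11]{our_orthographs1}.
\item It adds the rows and columns of $f_7,\til{f_7}$ by Lemma~\ref{lemma:mult-product}.
\end{enumerate}
Your \emph{structural} bullets overlap with this for $f_7,\til{f_7}$. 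Note, though, that the exact sign $(ac)(ad)=+ab$ is needed there, not just $\pm ab$; it comes from a path of length $2$ in Corollary~\ref{corollary:sedenions-path} and fixes the signs in the $f_3,f_4$ rows.

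Your reduction through the transitivity of $\Aut_{\mathbb{R}}(\mathbb{O})$, which the paper itself invokes at the start of Subsection~\ref{subsection:sedenion-zero-divisors}, is sound. $\widehat{\phi}$ is linear and multiplicative and sends each $f_m,\til{f_m}$ to the same formula in $e_1,e_2,e_7,e_4$. So the structure constants transfer, and the theorem becomes a finite check of $256$ products. This buys a self-contained, mechanical verification. The paper's route instead explains the block structure of Table~\ref{table:sedenions-mult}. It also isolates the relation $ab=-cd$ as the ingredient that makes the table close up in $\mathbb{S}$ (compare Example~\ref{example:zero-divisors-relation}).

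Fix one slip before computing. In Table~\ref{table:octonions-mult} the entry in row $e_1$, column $e_4$ is $e_5$, so $ad=e_1e_4=e_5$, not $-e_5$. If you hard-code $ad=-e_5$, your $f_3,\til{f_3}$ become the true $f_4,\til{f_4}$ and vice versa, and the check fails. For example, you would get $f_1f_3=0$ instead of $-2f_2$. With the correct value one finds $f_1f_2=(e_1,e_2)(e_7,-e_4)=(2e_6,2e_5)=2f_3$ and $f_1f_3=(e_1,e_2)(e_6,e_5)=(-2e_7,2e_4)=-2f_2$, as in the table.
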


\begin{proof}
We have $f_0 = e_0$ and $\til{f_0} = e_8$ in the standard basis of $\mathbb{S}$. By Proposition~\ref{proposition:sedenions-orthonormal-system}, the elements $1,a,b,c,d,ab,ac,ad$ form an orthonormal system with respect to the inner product $\langle \cdot, \cdot \rangle$. Hence $F$ is an orthogonal system in $\mathbb{S}$. By Lemma~\ref{lemma:A_n-anticomm}, the elements of $F \setminus \{ f_0 \}$ anticommute pairwise.

It follows from~\cite[p.~13]{moreno} that for any $(x,y) \in \mathbb{S}$ we have $(x,y)\til{f_0} = (x,y)(0,1) = (-y,x)$. Hence $f_m \til{f_0} = \til{f_m}$ and $\til{f_m} \til{f_0} = -f_m$ for all $m = 0, \dots, 7$.

Since $(a,b)(c,d) = 0$ and $(c,d)(ac,ad) = 0$, it follows from Lemma~\ref{lemma:sedenions-zero-divisors-relation} that $ab = -cd = (ac)(ad)$. The elements $a,b,c,d,ac,ad$ anticommute pairwise, so we have
$$
ab = -ba = dc = -cd = (ac)(ad) = -(ad)(ac).
$$
Then the multiplication table for $\{ f_m, \til{f_m} \: | \: m = 0, \dots, 6 \}$ follows from~\cite[Theorem~4.11]{our_orthographs1}.
We add columns and rows corresponding to $f_7$ and $\til{f_7}$ by using Lemma~\ref{lemma:mult-product}.
\end{proof}

\begin{remark}
According to Lemma~\ref{lemma:moreno-tilde}, the elements $f_m$ and $-\til{f_m}$ belong to the same corner of a double hexagon, $m = 1, \dots, 6$. The element $f_m$ belongs to the outer hexagon, while $-\til{f_m}$ belongs to the inner hexagon.

The elements $f_1$, $f_2$ and $f_3$ belong to every other angle of the outer hexagon, while $f_4$, $f_5$ and $f_6$ belong to its remaining angles. Each of the pairs $f_1$ and $f_6$, $f_2$ and $f_5$, $f_3$ and $f_4$ corresponds to the pair of opposite vertices of the outer hexagon.
\end{remark}

Table~\ref{table:sedenions-mult} has a block structure, its $(3 \times 3)$-blocks being of two types. Some of them are antidiagonal, and the others resemble the multiplication table of the unit quaternions. On the whole, Table~\ref{table:sedenions-mult} reminds us of the multiplication table for the standard basis of $\mathbb{S}$, cf.~\cite[p.~254]{cawagas}.
Our basis $F$ has been found earlier by de Marrais in~\cite[p.~8]{marrais3}, however, his ordering of the elements differs from ours, so his multiplication table has a different structure and consists of $(4 \times 4)$-blocks.

Let us recall the work by Chan and {\DJ}okovi\'c~\cite{chan} which is devoted to the classification of the subalgebras of $\mathbb{S}$. An orbit under the action of $\Aut_{\mathbb{R}}(\mathbb{S})$ on the set of all subalgebras of $\mathbb{S}$ is called a conjugacy class. The isomorphism of two subalgebras $S_1, S_2 \subseteq \mathbb{S}$ is denoted by $S_1 \cong S_2$, and their conjugacy is denoted by $S_1 \sim S_2$. It is shown in~\cite{chan} that the dimension of an arbitrary subalgebra of $\mathbb{S}$ has to be $1,2,3,4,6$ or $8$. Moreover, conjugacy classes for each of these dimensions are described. When defining the representatives of these conjugacy classes, Chan and {\DJ}okovi\'c used some relations between zero divisors which are stated in Table~\ref{table:sedenions-mult}, however, they did not write this table explicitly. Let
\begin{align*}
    S_3 &= \Lin((1,0), (e_1,e_4), (e_2,e_7)), & \mathbb{O}_{i,j,e} &= \mathbb{H} + \mathbb{H}(0,1),\\
    S_4 &= \Lin((1,0), (e_1,e_2), (e_4,e_7), (e_5,-e_6)), & R_8 &= S_4 + S_4(0,1),\\
    S_6 &= S_3 + S_3(0,1), & S_8 &= \mathbb{H} + \mathbb{H}(0,e_4). 
\end{align*}
Chan and {\DJ}okovi\'c~\cite{chan} have shown that:
\begin{itemize}
    \item Any $3$-dimensional subalgebra is conjugate to $S_3$;
    \item Any $4$-dimensional subalgebra is either isomorphic to $\mathbb{H}$ or conjugate to $S_4$;
    \item Any $6$-dimensional subalgebra is conjugate to $S_6$;
    \item Any $8$-dimensional subalgebra is conjugate to one of the subalgebras $\mathbb{O}, \mathbb{O}_{i,j,e}, R_8$ or $S_8$. The subalgebra $S_8$ is the only non-division algebra among them. The algebras $\mathbb{O}$ and $\mathbb{O}_{i,j,e}$ are isomorphic, while $R_8$ is not isomorphic to them.
\end{itemize}

The following proposition describes some subalgebras of $\mathbb{S}$ arising from Table~\ref{table:sedenions-mult}. We also classify them according to the results of~\cite{chan}.

\begin{proposition} \label{proposition:zero-divisor-subalgebras}
We list some nontrivial subalgebras of $\mathbb{S}$ which contain $f_1$.
\begin{tabbing}
\hspace{18mm}\=\kill
$\dim = 3\!:$ \> $\Lin(f_0,f_1,f_4) \sim S_3$, $\Lin(f_0,f_1,f_5) \sim S_3$,\\
\> $\Lin(f_0,f_1,\til{f_4}) \sim S_3$, ${\Lin(f_0,f_1,\til{f_5}) \sim S_3}$.\\
$\dim = 4\!:$ \> $\Lin(f_0,f_1,f_6,f_7) \cong \mathbb{H}$, $\Lin(f_0,f_1,\til{f_0},\til{f_1}) \cong \mathbb{H}$,\\
\> $\Lin(f_0,f_1,\til{f_6},\til{f_7}) \cong \mathbb{H}$, $\Lin(f_0,f_1,f_2,f_3) \sim S_4$.\\
$\dim = 6\!:$ \> $\Lin(f_0,f_1,f_4,\til{f_0},\til{f_1},\til{f_4}) \sim S_6$, $\Lin(f_0,f_1,f_5,\til{f_0},\til{f_1},\til{f_5}) \sim S_6$.\\
$\dim = 8\!:$ \> $\Lin(f_0,f_1,f_2,f_3,f_4,f_5,f_6,f_7) \sim S_8$, $\Lin(f_0,f_1,f_6,f_7,\til{f_2},\til{f_3},\til{f_4},\til{f_5}) \sim S_8$,\\
\> $\Lin(f_0,f_1,f_6,f_7,\til{f_0},\til{f_1},\til{f_6},\til{f_7}) \sim \mathbb{O}_{i,j,e}$, $\Lin(f_0,f_1,f_2,f_3,\til{f_0},\til{f_1},\til{f_2},\til{f_3}) \sim R_8$.
\end{tabbing}
\end{proposition}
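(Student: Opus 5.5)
The plan is to use the automorphism reduction from Subsection~\ref{subsection:sedenion-zero-divisors}. The extended octonion automorphism $\widehat{\phi}$ maps $(a,b),(c,d)$ to $(e_1,e_2),(e_7,e_4)$. It also maps every $f_m,\til{f_m}$ to the corresponding element built from $e_1,e_2,e_7,e_4$, because $\widehat{\phi}$ respects products and $(0,1)$. Conjugacy is invariant under automorphisms of $\mathbb{S}$, so it suffices to treat the concrete basis $f_0=(1,0)$, $f_1=(e_1,e_2)$, $f_2=(e_7,-e_4)$, $f_3=(e_1e_7,e_1e_4)=(e_6,e_5)$, $f_4=(e_6,-e_5)$, $f_5=(e_7,e_4)$, $f_6=(e_1,-e_2)$, $f_7=(0,e_3)$, with $\til{f_m}=f_m\til{f_0}$. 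In each case I will proceed in two steps.

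\emph{Closure.} Each listed span is closed under multiplication. I would read this directly from Table~\ref{table:sedenions-mult}: the block structure ($3\times 3$ antidiagonal blocks and quaternion-like blocks) shows which products of $f$'s and $\til{f}$'s land back in the span. The relation $f_m\til{f_0}=\til{f_m}$ handles the spans of the form $V+V\til{f_0}$ at once. For example, $\Lin(f_0,f_1,\til{f_0},\til{f_1})$ is closed because $f_1\til{f_1}=f_1(f_1\til{f_0})$, and the table gives a multiple of $\til{f_0}$.

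\emph{Identification.} For the dimension-4 spaces claimed $\cong\mathbb{H}$, I will exhibit $f_1$ and a second imaginary unit whose product is the third element. Then $f_1^2=-2$ after normalisation, and the relations are those of $i,j,k$. Quaternion-type triples such as $f_1,f_6,f_7$ appear as the quaternion-like blocks. For the others I use Chan and {\DJ}okovi\'c's classification, which is determined by dimension and a few invariants:
\begin{itemize}
\item every 3-dimensional subalgebra is $\sim S_3$, and every 6-dimensional one is $\sim S_6$, so closure suffices in these cases;
\item a 4-dimensional subalgebra is $\sim S_4$ iff it is not $\cong\mathbb{H}$, i.e.\ iff it contains zero divisors; $f_1$ and $f_2$ are both zero divisors, so $\Lin(f_0,f_1,f_2,f_3)\sim S_4$;
\item among the 8-dimensional classes, $S_8$ is the only non-division one, so the spans without $\til{f_0}$ (containing the pair $f_1,f_5$ with $f_1f_5=0$) are $\sim S_8$;
\item for the remaining two, I distinguish $\mathbb{O}_{i,j,e}$ from $R_8$ by alternativity/isomorphism type. $\Lin(f_0,f_1,f_6,f_7,\til{f_0},\dots)=\mathbb{H}'+\mathbb{H}'\til{f_0}$, where $\mathbb{H}'=\Lin(f_0,f_1,f_6,f_7)\cong\mathbb{H}$, which mirrors the definition of $\mathbb{O}_{i,j,e}$. $\Lin(f_0,\dots,f_3,\til{f_0},\dots,\til{f_3})=S'+S'\til{f_0}$ with $S'\sim S_4$, which mirrors $R_8=S_4+S_4(0,1)$. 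Concretely, I expect an automorphism of $\mathbb{S}$ fixing $(0,1)$, namely some $\widehat{\phi}$, to carry $S'$ to $S_4$, hence the doubled spaces correspond.
\end{itemize}

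The main obstacle is making the conjugacy claims rigorous rather than mere isomorphism. This is especially so for $\mathbb{O}_{i,j,e}$ versus $\mathbb{O}$, which are isomorphic but not conjugate, and for $S_4$, $S_8$, whose conjugacy classes may split. I will first try to produce explicit octonion automorphisms $\phi$ sending the relevant elements ($e_1,e_2,e_7,e_4$ and products) to the generators used in Chan and {\DJ}okovi\'c's representatives. For example, I would send $\mathbb{H}'$'s octonion components to those of $\mathbb{H}$, using transitivity of $\Aut(\mathbb{O})$ on orthonormal triples spanning quaternion subalgebras and fixing basic triples. If this fails, I will fall back on the invariants in~\cite{chan}, namely the division/non-division property and the structure of zero divisors, to single out the class.
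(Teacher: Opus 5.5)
\textbf{The $S_4$ step.} Your identification of $\Lin(f_0,f_1,f_2,f_3)$ with $S_4$ uses a false criterion. ``Not $\cong\mathbb{H}$ iff it contains zero divisors'' fails in either reading.

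\emph{Zero divisors of $\mathbb{S}$.} $\Lin(f_0,f_1,f_6,f_7)\cong\mathbb{H}$ contains $f_1,f_6\in Z(\mathbb{S})$. Your argument (``$f_1$ and $f_2$ are zero divisors'') would therefore put it in the class of $S_4$ as well. In fact every space in the list contains $f_1$.

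\emph{Zero divisors inside the subalgebra.} $\Lin(f_0,f_1,f_2,f_3)$ has none. Dividing $f_1,f_2,f_3$ by $\sqrt2$, the table gives $i^2=j^2=k^2=-1$ and $ij=-ji=\sqrt2\,k$ cyclically. This is a division algebra, as it must be, since $S_4$ lies in the division algebra $R_8$.

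What actually separates it from $\mathbb{H}$ is non-associativity, e.g.\ $(f_1f_2)f_2=-4f_1\neq-2f_1=f_1(f_2f_2)$; this is what the paper uses. The same computation shows that the ``quaternion-like'' blocks $\{f_1,f_2,f_3\}$ and $\{f_4,f_5,f_6\}$ of Table~\ref{table:sedenions-mult} are not quaternionic: the factor $2$ becomes $\sqrt2$ after normalisation. Also, $f_1,f_6,f_7$ do not form such a block. So for the three $\mathbb{H}$ cases you must check the relations directly, as the paper does.

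Your explicit-automorphism route repairs this case and confirms your $R_8$ expectation. Take the $\phi\in\Aut_{\mathbb{R}}(\mathbb{O})$ fixing $e_1,e_2$ with $\phi(e_7)=e_4$; it exists since $(e_1,e_2,e_7)$ and $(e_1,e_2,e_4)$ are both basic triples. It satisfies $\phi(e_4)=-e_7$, $\phi(e_6)=e_5$ and $\phi(e_5)=-e_6$. Hence $\widehat\phi$ maps $\Lin(f_0,f_1,f_2,f_3)$ onto $S_4$, and since it fixes $(0,1)$, it maps the doubled space onto $R_8$.

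\textbf{The $\mathbb{O}_{i,j,e}$ step.} Your fallback invariants cannot work here. Division versus non-division does not separate $\mathbb{O}$ from $\mathbb{O}_{i,j,e}$, which are isomorphic division algebras; you need a conjugation invariant. The paper shows the span is $\cong\mathbb{O}$, which excludes $R_8$ and $S_8$. It then uses that $\{\pm(0,1)\}$ is $\Aut_{\mathbb{R}}(\mathbb{S})$-invariant, while $(0,1)$ lies in the span but not in $\mathbb{O}$. Containing an element of $Z(\mathbb{S})$ would also exclude $\mathbb{O}$, since $\mathbb{O}\times 0$ contains none. That is the one place where a zero-divisor invariant genuinely helps.

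Your own reduction settles it more directly. From $f_1,f_6$ you get $(a,0)$ and $(0,b)$; from $\til{f_1},\til{f_6}$ you get $(b,0)$ and $(0,a)$; and $f_7,\til{f_7}$ give $(0,ab)$ and $(ab,0)$. So the span is $\{(x,y)\,:\,x,y\in\Lin(1,a,b,ab)\}$. For $a=e_1$, $b=e_2$ this is literally $\mathbb{H}+\mathbb{H}(0,1)=\mathbb{O}_{i,j,e}$. What must be sent to $\mathbb{H}$ is $\Lin(1,a,b,ab)$, not $\Lin(f_0,f_1,f_6,f_7)$: the latter contains $(0,b)$, so no $\widehat\phi$ moves it into $\mathbb{O}\times 0$.

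\textbf{Minor point.} The second $S_8$ span contains $\til{f_5}$ but not $f_5$, so use $f_1\til{f_4}=0$ there.
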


\begin{proof}
One can verify by using Table~\ref{table:sedenions-mult} that these vector spaces are closed under multiplication, so they are subalgebras of $\mathbb{S}$.

The conjugacy to $S_3$ and $S_6$ when $\dim = 3$ and $\dim = 6$ follows from~\cite[Theorem~8.1]{chan}. 

The isomorphism between $\Lin(f_0,f_1,f_6,f_7)$, $\Lin(f_0,f_1,\til{f_0},\til{f_1})$, $\Lin(f_0,f_1,\til{f_6},\til{f_7})$ and $\mathbb{H}$ is clear if we normalize $f_1, f_6, \til{f_1}, \til{f_6}$, that is, divide these vectors by $\sqrt{2}$. However, the subalgebra $\Lin(f_0,f_1,f_2,f_3)$ is not associative, and thus not isomorphic to $\mathbb{H}$, see~\cite[p.~496]{chan}. Hence $\Lin(f_0,f_1,f_2,f_3) \sim S_4$.

Since $f_1f_4 = f_1\til{f_4} = 0$, $\Lin(f_0,f_1,f_2,f_3,f_4,f_5,f_6,f_7)$ and $\Lin(f_0,f_1,f_6,f_7,\til{f_2},\til{f_3},\til{f_4},\til{f_5})$ are $8$-dimensional non-division subalgebras, so they are conjugate to $S_8$.

We have $\Lin(f_0,f_1,f_6,f_7) \cong \mathbb{H}$, so $f_1$ and $f_6$ alternate strongly in $\mathbb{S}$. Moreover, $f_6 \perp \Lin(f_0,f_1,\til{f_0},\til{f_1})$. Hence, by~\cite[Lemma~5.8]{our_orthographs1},
$\Lin(f_0,f_1,f_6,f_7,\til{f_0},\til{f_1},\til{f_6},\til{f_7})$ is isomorphic to $\mathbb{O}$. Since the set $\{ \pm \til{f_0} \}$ is invariant under $\Aut_{\mathbb{R}}(\mathbb{S})$, see~\cite[Lemma~2.1]{eakin}, and $\til{f_0} \notin \mathbb{O}$, the subalgebra $\Lin(f_0,f_1,f_6,f_7,\til{f_0},\til{f_1},\til{f_6},\til{f_7})$ is not conjugate to $\mathbb{O}$. Hence it is conjugate to $\mathbb{O}_{i,j,e}$.

Finally, $\Lin(f_0,f_1,f_2,f_3,\til{f_0},\til{f_1},\til{f_2},\til{f_3}) = \Lin(f_0,f_1,f_2,f_3) + \Lin(f_0,f_1,f_2,f_3) (0,1)$. We have $\Lin(f_0,f_1,f_2,f_3) \sim S_4$, so $\Lin(f_0,f_1,f_2,f_3,\til{f_0},\til{f_1},\til{f_2},\til{f_3}) \sim S_4 + S_4(0,1) = R_8$.
\end{proof}

\begin{remark}
Since $f_1 = (a,b)$ is an arbitrary zero divisor in $\mathbb{S}$, Proposition~\ref{proposition:zero-divisor-subalgebras} provides subalgebras which contain an arbitrary zero divisor of $\mathbb{S}$.
\end{remark}

\begin{landscape}
\begin{table}[H]
\centering
$
\begin{array}{|c||c:ccc:ccc:c:c:ccc:ccc:c|}
\hline
\vphantom{\Big|} \times & f_0 & f_1 & f_2 & f_3 & f_4 & f_5 & f_6 & f_7 & \til{f_0} & \til{f_1} & \til{f_2} & \til{f_3} & \til{f_4} & \til{f_5} & \til{f_6} & \til{f_7}\\
\hline\hline
\vphantom{\Big|} f_0 & f_0 & f_1 & f_2 & f_3 & f_4 & f_5 & f_6 & f_7 & \til{f_0} & \til{f_1} & \til{f_2} & \til{f_3} & \til{f_4} & \til{f_5} & \til{f_6} & \til{f_7}\\
\hdashline
\vphantom{\Big|} f_1 & f_1 & -2 f_0 & 2 f_3 & -2 f_2 & 0 & 0 & 2 f_7 & -f_6 & \til{f_1} & -2 \til{f_0} & -2 \til{f_3} & 2 \til{f_2} & 0 & 0 & -2 \til{f_7} & \til{f_6}\\
\vphantom{\Big|} f_2 & f_2 & -2 f_3 & -2 f_0 & 2 f_1 & 0 & 2 f_7 & 0 & -f_5 & \til{f_2} & 2 \til{f_3} & -2 \til{f_0} & -2 \til{f_1} & 0 & -2 \til{f_7} & 0 & \til{f_5}\\
\vphantom{\Big|} f_3 & f_3 & 2 f_2 & -2 f_1 & -2 f_0 & 2 f_7 & 0 & 0 & -f_4 & \til{f_3} & -2 \til{f_2} & 2 \til{f_1} & -2 \til{f_0} & -2 \til{f_7} & 0 & 0 & \til{f_4}\\
\hdashline
\vphantom{\Big|} f_4 & f_4 & 0 & 0 & -2 f_7 & -2 f_0 & -2 f_6 & 2 f_5 & f_3 & \til{f_4} & 0 & 0 & 2 \til{f_7} & -2 \til{f_0} & 2 \til{f_6} & -2 \til{f_5} & -\til{f_3}\\
\vphantom{\Big|} f_5 & f_5 & 0 & -2 f_7 & 0 & 2 f_6 & -2 f_0 & -2 f_4 & f_2 & \til{f_5} & 0 & 2 \til{f_7} & 0 & -2 \til{f_6} & -2 \til{f_0} & 2 \til{f_4} & -\til{f_2}\\
\vphantom{\Big|} f_6 & f_6 & -2 f_7 & 0 & 0 & -2 f_5 & 2 f_4 & -2 f_0 & f_1 & \til{f_6} & 2 \til{f_7} & 0 & 0 & 2 \til{f_5} & -2 \til{f_4} & -2 \til{f_0} & -\til{f_1}\\
\hdashline
\vphantom{\Big|} f_7 & f_7 & f_6 & f_5 & f_4 & -f_3 & -f_2 & -f_1 & -f_0 & \til{f_7} & -\til{f_6} & -\til{f_5} & -\til{f_4} & \til{f_3} & \til{f_2} & \til{f_1} & -\til{f_0}\\
\hdashline
\vphantom{\Big|} \til{f_0} & \til{f_0} & -\til{f_1} & -\til{f_2} & -\til{f_3} & -\til{f_4} & -\til{f_5} & -\til{f_6} & -\til{f_7} & -f_0 & f_1 & f_2 & f_3 & f_4 & f_5 & f_6 & f_7\\
\hdashline
\vphantom{\Big|} \til{f_1} & \til{f_1} & 2 \til{f_0} & -2 \til{f_3} & 2 \til{f_2} & 0 & 0 & -2 \til{f_7} & \til{f_6} & -f_1 & -2 f_0 & -2 f_3 & 2 f_2 & 0 & 0 & -2 f_7 & f_6\\
\vphantom{\Big|} \til{f_2} & \til{f_2} & 2 \til{f_3} & 2 \til{f_0} & -2 \til{f_1} & 0 & -2 \til{f_7} & 0 & \til{f_5} & -f_2 & 2 f_3 & -2 f_0 & -2 f_1 & 0 & -2 f_7 & 0 & f_5\\
\vphantom{\Big|} \til{f_3} & \til{f_3} & -2 \til{f_2} & 2 \til{f_1} & 2 \til{f_0} & -2 \til{f_7} & 0 & 0 & \til{f_4} & -f_3 & -2 f_2 & 2 f_1 & -2 f_0 & -2 f_7 & 0 & 0 & f_4\\
\hdashline
\vphantom{\Big|} \til{f_4} & \til{f_4} & 0 & 0 & 2 \til{f_7} & 2 \til{f_0} & 2 \til{f_6} & -2 \til{f_5} & -\til{f_3} & -f_4 & 0 & 0 & 2 f_7 & -2 f_0 & 2 f_6 & -2 f_5 & -f_3\\
\vphantom{\Big|} \til{f_5} & \til{f_5} & 0 & 2 \til{f_7} & 0 & -2 \til{f_6} & 2 \til{f_0} & 2 \til{f_4} & -\til{f_2} & -f_5 & 0 & 2 f_7 & 0 & -2 f_6 & -2 f_0 & 2 f_4 & -f_2\\
\vphantom{\Big|} \til{f_6} & \til{f_6} & 2 \til{f_7} & 0 & 0 & 2 \til{f_5} & -2 \til{f_4} & 2 \til{f_0} & -\til{f_1} & -f_6 & 2 f_7 & 0 & 0 & 2 f_5 & -2 f_4 & -2 f_0 & -f_1\\
\hdashline
\vphantom{\Big|} \til{f_7} & \til{f_7} & -\til{f_6} & -\til{f_5} & -\til{f_4} & \til{f_3} & \til{f_2} & \til{f_1} & \til{f_0} & -f_7 & -f_6 & -f_5 & -f_4 & f_3 & f_2 & f_1 & -f_0\\
\hline
\end{array}
$
\caption{\label{table:sedenions-mult} Multiplication table for zero divisor basis elements of $\mathbb{S}$.}
\end{table}
\end{landscape}

\section{Commutativity graph of the sedenions} \label{section:commutativity-graph}

\begin{lemma} {\rm \cite[p. 438]{schafer}}
\leavevmode
\begin{enumerate}[(1)]
    \item If $n \leq 1$ then $C_{\A_n} = \A_n$, so the vertex set of $\Gamma_C(\A_n)$ is the empty set.
    \item If $n \geq 2$ then $C_{\A_n} = \mathbb{R}$, so the vertex set of $\Gamma_C(\A_n)$ is $P(\A_n \setminus \mathbb{R})$.
\end{enumerate}
\end{lemma}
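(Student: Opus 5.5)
Since $C_{\A_n}$ is the center of $\A_n$, this is the classical description of the center of a Cayley-Dickson algebra over $\mathbb{R}$. I would argue by induction along the tower of Definition~\ref{definition:A_n}, reducing everything to $\mathbb{R}$, $\mathbb{C}$-type and quaternion-type computations with the basis elements $e^{(n)}_m$.

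\textbf{Case $n\le 1$.} For $n=0$ we have $\A_0=\mathbb{R}$, which is trivially commutative. For $n=1$, $\A_1=\mathbb{R}\{\gamma_0\}$ consists of pairs of reals, and the involution on $\mathbb{R}$ is the identity. Hence
$$
(a,b)(c,d)=(ac+\gamma_0 db,\ da+bc),
$$
which is symmetric under $(a,b)\leftrightarrow(c,d)$. So $\A_1$ is commutative and $C_{\A_1}=\A_1$. By definition the vertex set $P(\A_n\setminus C_{\A_n})$ is then empty.

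\textbf{Case $n\ge 2$.} Clearly $\mathbb{R}=\mathbb{R}e^{(n)}_0\subseteq C_{\A_n}$. For the reverse inclusion, take $x\in C_{\A_n}$ and write $x=\Re(x)+x'$ with $x'$ pure. The part $\Re(x)$ is central, so $x'$ is central too. Suppose $x'\neq 0$. I want to exhibit a pure $b$ that does not commute with $x'$.

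By Lemma~\ref{lemma:A_n-anticomm}, any pure $b\ne0$ with $\langle x',b\rangle=0$ anticommutes with $x'$. Then $x'b=bx'=-bx'$ gives $x'b=0$. This cannot hold for every such $b$. Since $\dim\Im(\A_n)=2^n-1\ge 3$, the space of pure $b$ orthogonal to $x'$ has dimension at least $2$.

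Next I use that for pure $b$ we have $b^2=-n(b)$. Indeed $b\bar b=n(b)$ and $\bar b=-b$, so by flexibility/power-associativity, or directly, $b^2=-n(b)$. Suppose $x'b=0$ for all pure $b\perp x'$. Then
$$
0=(x'b)\bar b=-(x'b)b,
$$
while by alternativity of each element with itself, $(x'b)b=x'(bb)=-n(b)x'$. Alternativity holds here since every $\A_n$ satisfies $[y,b,b]=0$ for the regular involution: $[y,b,\bar b]=0$ via $\bar b=t-b$ with $b+\bar b=t$ real. Hence $n(b)x'=0$ for all such $b$.

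This forces $n(b)=0$ for every pure $b\perp x'$. In the split case $n$ can be isotropic, so I would instead pick $b$ to be a suitable basis vector $e^{(n)}_m$, $m\ge1$, with $n(e_m)=\pm1\ne0$. If $x'$ is not proportional to such a vector, project onto a basis vector orthogonal to it. Since the basis $e_m$ is orthogonal for $\langle\cdot,\cdot\rangle$, with $\langle e_m,e_m\rangle=\pm1$, and there are at least three pure basis vectors, I would choose among $e_1,e_2,e_3$ the combination $b=\langle x',e_j\rangle e_i-\langle x',e_i\rangle e_j$ or a basis vector with $\langle x',e_i\rangle=0$. I then need $n(b)\neq0$, adjusting via a third index if $b$ is isotropic.

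\textbf{Main obstacle.} This last point, guaranteeing a nonisotropic pure $b\perp x'$ in the split case, is where the most care is needed. The fallback is a direct computation: $e_i$ commutes with $x'$ only if the $e_m$-coefficients of $x'$ vanish for all $m\neq0,i$. This is because distinct pure basis elements anticommute and $e_ie_m=\pm e_{i\oplus m}$ is nonzero. Using two different indices $i\ne i'$, both at least $1$ (possible since $n\ge2$), this forces $x'=0$, a contradiction.

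Therefore $C_{\A_n}=\mathbb{R}$, and the vertex set is $P(\A_n\setminus\mathbb{R})$.
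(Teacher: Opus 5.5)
\textbf{There is a genuine error in your main argument.} The paper gives no proof of this lemma (it only cites Schafer), so your proposal has to stand on its own.

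The claim that every $\A_n$ satisfies $[y,b,b]=0$ is false for $n\ge 4$; the paper itself recalls that $\A_n$ is alternative only for $n\le 3$. Concretely, in $\mathbb{S}$ take $y,b\neq 0$ with $yb=0$. Zero divisors are pure, so $(yb)b=0$ while $y(bb)=-n(b)y\neq 0$. Your justification via $\bar b=t-b$ is circular: it only gives $[y,b,\bar b]=-[y,b,b]$.

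Hence the step $(x'b)b=x'(bb)$ is unsupported, and so is the conclusion $n(b)x'=0$. The isotropy problem you flag for split algebras is secondary to this. Had you used that a central element in the nonassociative sense lies in the nucleus, $[x',b,b]=0$ would be legitimate. But you only used that $x'$ commutes with everything, which is also the notion that matters for $\Gamma_C$.

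\textbf{Your fallback computation is a complete proof by itself.} It should replace everything after the reduction to a pure central $x'=\sum_{m\ge 1}x_m e_m$.

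Distinct pure basis units anticommute, by Lemma~\ref{lemma:A_n-anticomm}, since the standard basis is orthogonal. Hence $e_ix'-x'e_i=2\sum_{m\ge 1,\,m\neq i}x_m\,e_ie_m$. Each $e_ie_m$ is a nonzero multiple of $e_{i\oplus m}$, by induction on the doubling formula. These basis vectors are distinct because $m\mapsto i\oplus m$ is injective, so they are linearly independent.

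So $e_ix'=x'e_i$ forces $x_m=0$ for all $m\notin\{0,i\}$. A second index $i'\neq i$, available because $2^n-1\ge 2$, then gives $x_i=0$.

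This argument works uniformly for every $n\ge 2$ and every choice of parameters. It needs no alternativity and no norm or isotropy considerations. It shows directly that the set of elements commuting with all of $\A_n$ is $\mathbb{R}$, which is what the description of the vertex set of $\Gamma_C(\A_n)$ requires. Your treatment of $n\le 1$ is correct.
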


\begin{lemma} {\rm \cite[Lemma 8.11]{our_anticomm} } \label{lemma:A_n-commutativity-through-orthogonality}
Let $x \in \A_n \setminus \{ 0 \}$, $\Re(x) = 0$.
\begin{enumerate} [(1)]
\item If $n(x)=0$ and $n \leq 3$ then $C_{\A_n}(x) = \mathbb{R} \oplus O_{\A_n}(x)$; \label{item:norm-zero}
\item If $n(x) \neq 0$ then $C_{\A_n}(x) = \mathbb{R} \oplus \mathbb{R}x \oplus O_{\A_n}(x)$. \label{item:norm-nonzero}
\end{enumerate}
\end{lemma}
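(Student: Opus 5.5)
The plan is to reduce everything to the imaginary part of a commuting element and then use the polarization identity for pure elements. For pure $x,y$ we have $\bar{x}=-x$ and $\bar{y}=-y$. So by Notation~\ref{proposition:lambda-form},
$$
2\langle x,y\rangle = x\bar{y}+y\bar{x} = -(xy+yx),
$$
that is, $xy+yx=-2\langle x,y\rangle\in\mathbb{R}$.

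Since real scalars commute with everything, $y\in C_{\A_n}(x)$ if and only if $y':=\Im(y)$ commutes with $x$. For such $y'$, adding $xy'=y'x$ to the identity above gives
$$
xy' = y'x = -\langle x,y'\rangle.
$$
The right-to-left inclusions in both items are immediate:
\begin{itemize}
\item reals commute with $x$;
\item $x$ commutes with itself;
\item elements of $O_{\A_n}(x)$ satisfy $xz=zx=0$.
\end{itemize}
So the work is in the left-to-right inclusions and in checking that the sums are direct.

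\textbf{Item (2), $n(x)\neq 0$.} Put
$$
z = y'-\frac{\langle x,y'\rangle}{n(x)}\,x .
$$
Then $z$ is pure and $\langle x,z\rangle=0$, so Lemma~\ref{lemma:A_n-anticomm} gives $xz=-zx$. On the other hand, $z$ is a difference of elements commuting with $x$, so $xz=zx$. Hence $xz=zx=0$, i.e. $z\in O_{\A_n}(x)$, and $y\in\mathbb{R}+\mathbb{R}x+O_{\A_n}(x)$.

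For directness, suppose $\alpha+\beta x\in O_{\A_n}(x)$. Since $x^2=-n(x)$, we get $x(\alpha+\beta x)=\alpha x-\beta n(x)=0$. Here $\alpha x$ is pure and $\beta n(x)$ is real, so both vanish, giving $\alpha=\beta=0$. The sum $\mathbb{R}\oplus\mathbb{R}x$ itself is direct because $x$ is pure and nonzero.

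\textbf{Item (1), $n(x)=0$ and $n\le 3$.} Now $x^2=-n(x)=0$, so $x\in O_{\A_n}(x)$ and the term $\mathbb{R}x$ is absorbed. The remaining task is to show $\lambda:=\langle x,y'\rangle=0$, after which $xy'=y'x=0$ and $y'\in O_{\A_n}(x)$.

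This is the step where alternativity ($n\le 3$) enters, and the one I expect to need care. Compute $x(xy')$ in two ways:
\begin{itemize}
\item by alternativity, $x(xy')=(xx)y'=0$;
\item from $xy'=-\lambda$, $x(xy')=-\lambda x$.
\end{itemize}
Since $x\neq0$, this forces $\lambda=0$. Finally, $\mathbb{R}\cap O_{\A_n}(x)=0$, because $\alpha x=0$ forces $\alpha=0$, so the sum is direct.
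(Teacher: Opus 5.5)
Your proof is correct and complete. The paper does not prove this lemma itself; it is imported verbatim from \cite[Lemma~8.11]{our_anticomm}, so there is no in-text argument to compare with. What your proposal provides is a short self-contained derivation from facts the paper does record:
\begin{itemize}
\item the polarization identity $2\langle a,b\rangle=a\bar b+b\bar a$, which for pure $x,y'$ gives $xy'+y'x=-2\langle x,y'\rangle$;
\item the identity $x^2=-n(x)$ for pure $x$;
\item alternativity of $\A_n$ for $n\le 3$.
\end{itemize}

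Two small remarks. First, in item (2) the appeal to Lemma~\ref{lemma:A_n-anticomm} is unnecessary: your opening identity already gives $xz+zx=-2\langle x,z\rangle=0$. Second, your argument makes clear where each hypothesis is used. Item (2) uses no alternativity, so it holds for every $n$ and every choice of parameters $\gamma_k$. The hypothesis $n\le 3$ in item (1) enters exactly once, through $x(xy')=(xx)y'$. That step is what forces $\langle x,y'\rangle=0$ when $x$ is null, where subtracting the projection $\frac{\langle x,y'\rangle}{n(x)}x$ is no longer possible.
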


\begin{corollary} \label{corollary:small-commutativity-component}
Let $x \in \Main_n \setminus \mathbb{R}$, $\Im(x) \notin Z(\Main_n)$. Then the connected component of $\Gamma_C(\Main_n)$ which contains $x$ is the complete graph on the vertex set $P(x + \mathbb{R})$.
\end{corollary}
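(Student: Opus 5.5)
Write $x = r + y$ with $r = \Re(x) \in \mathbb{R}$ and $y = \Im(x)$. Since $x \notin \mathbb{R}$, we have $y \neq 0$. Since $y$ is pure and $\Main_n$ has a Euclidean norm (Proposition~\ref{proposition:A_n-euclidean-product}), $n(y) \neq 0$. Adding a real scalar does not change commutation relations, so $C_{\Main_n}(x) = C_{\Main_n}(y)$. The plan is to show two things. First, every vertex adjacent to $[x]$ lies in $P(y+\mathbb{R})$, that is, it is of the form $[s + t y]$ with $t \neq 0$. Second, every two such vertices are adjacent. Together these show that $P(y + \mathbb{R})$, which equals $P(x+\mathbb{R})$, is closed under taking neighbours, so it is a full connected component, and that it is a clique.

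For the first step, apply Lemma~\ref{lemma:A_n-commutativity-through-orthogonality}\eqref{item:norm-nonzero} to $y$:
$$C_{\Main_n}(y) = \mathbb{R} \oplus \mathbb{R}y \oplus O_{\Main_n}(y).$$
The key observation is that $O_{\Main_n}(y) = 0$. Indeed, if $yz = zy = 0$ with $z \neq 0$, then $y$ is a (two-sided) zero divisor, contradicting $\Im(x) \notin Z(\Main_n)$. Hence $C_{\Main_n}(x) = \Lin(1, y)$. So any vertex $[z]$ adjacent to $[x]$ satisfies $z = s + ty$ with $t \neq 0$, because $z \notin \mathbb{R}$. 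This gives $[z] \in P(y+\mathbb{R})$ after scaling.

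For the second step, any element $z = s + ty$ with $t \neq 0$ has $\Im(z) = ty$. This is again not a zero divisor, since scaling preserves the zero-divisor property. Thus the same argument applies to $z$ and gives $C_{\Main_n}(z) = \Lin(1,y)$. In particular, any two elements of $\Lin(1,y) \setminus \mathbb{R}$ commute, so all vertices of $P(y+\mathbb{R})$ are pairwise adjacent. By the first step, applied to each vertex of this set, no vertex of $P(y + \mathbb{R})$ has a neighbour outside it. The component is therefore exactly the complete graph on $P(x+\mathbb{R})$.

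The only delicate point is identifying $P(x+\mathbb{R})$ with the lines $[s+ty]$, $t\neq 0$. Each such line contains a unique representative of the form $y + s'$, and $y+\mathbb{R} = x + \mathbb{R}$, so the two descriptions agree. Distinct values of $s'$ give distinct lines, as needed for a well-defined vertex set. There is no real obstacle otherwise. The step that carries the weight is the vanishing of the orthogonalizer, which is immediate from the hypothesis on $\Im(x)$.
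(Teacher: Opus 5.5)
Your proposal is correct and follows the same route as the paper's proof. Both use Lemma~\ref{lemma:A_n-commutativity-through-orthogonality}\eqref{item:norm-nonzero} together with the vanishing of $O_{\Main_n}(\Im(x))$ to get $C_{\Main_n}(x)=\Lin(1,\Im(x))$, which gives both the closure and the completeness of $P(x+\mathbb{R})$; you spell out the details the paper leaves implicit (nonzero norm of $\Im(x)$, trivial orthogonalizer, identification of the lines).
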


\begin{proof}
It follows from Lemma~\ref{lemma:A_n-commutativity-through-orthogonality} that $C_{\Main_n}(x) = C_{\Main_n}(\Im(x)) = \Lin(1, \Im(x)) = \Lin(1,x)$. Clearly, $P(\Lin(1,x) \setminus \mathbb{R}) = P(x + \mathbb{R})$. The desired statement follows immediately.
\end{proof}

Due to Corollary~\ref{corollary:small-commutativity-component}, it is natural to give the following definition:

\begin{definition}
Let $\mathbb{R} + Z(\mathbb{S})$ denote the set of elements of $\mathbb{S}$ whose imaginary part is a zero divisor. $\Gamma_C^Z(\mathbb{S})$ is the subgraph of $\Gamma_C(\mathbb{S})$ on the vertex set $P(\mathbb{R} + Z(\mathbb{S}))$.
\end{definition}

\begin{lemma} \label{lemma:centralizer-product}
Let $(a,b) \in Z(\mathbb{S})$, $z \in \mathbb{O}$. Then there exists $(x,y) \in \Im(C_{\mathbb{S}}((a,b)))$ such that $xy = z$ if and only if $z \perp \Lin(1,a,b)$. 
\end{lemma}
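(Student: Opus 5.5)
The plan is to make the centralizer explicit and then split the product $xy$ into a quadratic and a linear part. Normalize so that $n(a)=n(b)=1$ and pick $(c,d)\in Z(\mathbb{S})$ with $(a,b)(c,d)=0$ and $n(c)=n(d)=1$. Since $(a,b)$ is doubly pure and has nonzero norm, Lemma~\ref{lemma:A_n-commutativity-through-orthogonality}\eqref{item:norm-nonzero} gives $\Im(C_{\mathbb{S}}((a,b)))=\mathbb{R}(a,b)\oplus O_{\mathbb{S}}((a,b))$. Proposition~\ref{proposition:hexagon-basis} then gives
$$O_{\mathbb{S}}((a,b))=\Lin((c,d),(d,-c),(ac,-ad),(ad,ac)).$$
By Proposition~\ref{proposition:sedenions-orthonormal-system}, the orthogonal complement of $\Lin(1,a,b)$ in $\mathbb{O}$ is exactly the $5$-dimensional space $\Lin(ab,c,d,ac,ad)$. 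So the statement reduces to showing that the image of the quadratic map $(x,y)\mapsto xy$ on this $5$-dimensional space of pairs is exactly $\Lin(ab,c,d,ac,ad)$.

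Write a general element as $(x,y)=k(a,b)+(u_1,u_2)$ with $(u_1,u_2)\in O_{\mathbb{S}}((a,b))$. Then
$$xy=k^2ab+k\,L(u)+u_1u_2,\qquad L(u)=au_2+u_1b.$$

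For the quadratic term: if $u\neq 0$, then $u$ is a zero divisor adjacent to $(a,b)$. The components of $u$ have equal norm (Lemma~\ref{lemma:A_n-strongly-alternative-properties}), so after rescaling Lemma~\ref{lemma:sedenions-zero-divisors-relation} applies and gives $u_1u_2=-n(u_1)\,ab$.

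For the linear term: I compute $L$ on the four basis vectors of $O_{\mathbb{S}}((a,b))$, using the pairwise anticommutation of $a,b,c,d,ac,ad$ (Lemma~\ref{lemma:A_n-anticomm}) together with $ab=dc=(ac)(ad)$ and alternativity. For example, $L((c,d))=ad+cb$. The aim is to show that the four values lie in $\Lin(c,d,ac,ad)$ and are linearly independent, so that $L$ is a linear bijection $O_{\mathbb{S}}((a,b))\to\Lin(c,d,ac,ad)$. I expect this step to be the main obstacle: rewriting products like $cb$ in terms of $c,d,ac,ad$ needs Moufang-type manipulations. Where this gets messy, I would transport everything by the automorphism to $(e_1,e_2),(e_7,e_4)$ and read $L$ off Table~\ref{table:octonions-mult} directly.

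Given these two facts, the formula becomes
$$xy=(k^2-n(u_1))\,ab+k\,L(u).$$
This shows at once that every such $xy$ is orthogonal to $1,a,b$, which is the ``only if'' direction.

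For the converse, take $z=t\,ab+w$ with $w\in\Lin(c,d,ac,ad)$.
\begin{itemize}
\item If $w\neq 0$, set $u=L^{-1}(w)/k$. Then $n(u_1)=s/k^2$ with $s>0$ fixed, and we need $k^2-s/k^2=t$. As $k$ runs over $(0,\infty)$ the left side takes every real value, so a solution $k$ exists.
\item If $w=0$ and $t\ge 0$, take $u=0$ and $k=\sqrt{t}$.
\item If $w=0$ and $t<0$, take $k=0$ and choose $u$ with $n(u_1)=-t$.
\end{itemize}
In every case we obtain $(x,y)\in\Im(C_{\mathbb{S}}((a,b)))$ with $xy=z$, which completes the proof.
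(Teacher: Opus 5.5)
Your proposal is correct and is essentially the paper's proof. The paper uses the same decomposition $\Im(C_{\mathbb{S}}((a,b)))=\mathbb{R}(a,b)\oplus O_{\mathbb{S}}((a,b))$ and reaches exactly your formula, $xy=2k_0(k_1 ad-k_2 ac-k_3 c+k_4 d)-(k_1^2+k_2^2+k_3^2+k_4^2-k_0^2)ab$. The only differences are that you obtain the $ab$-coefficient via Lemma~\ref{lemma:sedenions-zero-divisors-relation} instead of expanding, and solve for $k$ by monotonicity instead of the paper's explicit choice $k_0=\sqrt{(\sqrt{n(z)}-l_0)/2}$, $k_j=l_j/(2k_0)$. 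Also, the step you flagged as the main obstacle is routine and needs no Moufang identities: from $ac=bd$, $ad=-bc$, pairwise anticommutation and alternativity one gets $L((c,d))=2ad$, $L((d,-c))=-2ac$, $L((ad,ac))=-2c$, $L((ac,-ad))=2d$.
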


\begin{proof}
Consider some $(c,d) \in O_\mathbb{S}((a,b))$, $(c,d) \neq 0$, and assume without loss of generality that $n(a) = n(b) = n(c) = n(d) = 1$. By Proposition~\ref{proposition:sedenions-orthonormal-system}, the elements $1,a,b,c,d,ab,ac,ad$ form an orthonormal system with respect to the inner product $\langle \cdot, \cdot \rangle$. Lemma~\ref{lemma:A_n-anticomm} implies that $a,b,c,d,ab,ac,ad$ anticommute pairwise. By Lemma~\ref{lemma:sedenions-zero-divisors-relation}, $ac = bd = -db$, $ad = -bc = cb$, $ab = -cd = (ac)(ad)$, so we obtain
\begin{align*}
    a(ad) &= -d = -(ad)a, & c(ad) &= c(cb) = -b = -(ad)c,\\
    a(ac) &= -c = -(ac)a, & c(ac) &= -c(ca) = a = -(ac)c,\\
    b(ad) &= -b(bc) = c = -(ad)b, & d(ad) &= -d(da) = a = -(ad)d,\\
    b(ac) &= b(bd) = -d = -(ac)b, & d(ac) &= -d(db) = b = -(ac)d.
\end{align*}
It follows from Lemma~\ref{lemma:A_n-commutativity-through-orthogonality} that
$$
\Im(C_{\mathbb{S}}((a,b))) = \mathbb{R}(a,b) \oplus O_{\mathbb{S}}((a,b)) = \Lin \left( (a,b), (c,d), (d,-c), (ad,ac), (ac,-ad) \right).
$$
Consider some $(x,y) \in \Im(C_{\mathbb{S}}((a,b)))$, $(x,y) = k_0(a,b) + k_1(c,d) + k_2(d,-c) + k_3(ad,ac) + k_4(ac,-ad)$. Then we have
\begin{align*}
x &= k_0 a + k_1 c + k_2 d + k_3 ad + k_4 ac,\\
y &= k_0 b - k_2 c + k_1 d - k_4 ad + k_3 ac,\\
xy &= 2k_0(k_1 ad - k_2 ac - k_3 c + k_4 d) - (k_1^2 + k_2^2 + k_3^2 + k_4^2 - k_0^2) ab.
\end{align*}
Clearly, $xy \perp \Lin(1,a,b)$. Moreover, for any $z \perp \Lin(1,a,b)$ we can choose $k_j$, $j = 0, \dots, 5$, such that $xy = z$. Let $z = l_1 ad - l_2 ac - l_3 c + l_4 d - l_0 ab$, $n(z) = l_0^2 + l_1^2 + l_2^2 + l_3^2 + l_4^2$. We have $n(xy) = n(x)n(y) = (k_0^2 + k_1^2 + k_2^2 + k_3^2 + k_4^2)^2$, so we need $k_0^2 + k_1^2 + k_2^2 + k_3^2 + k_4^2 = \sqrt{n(z)}$ and $k_1^2 + k_2^2 + k_3^2 + k_4^2 - k_0^2 = l_0$. Hence $k_0^2 = \frac{\sqrt{n(z)} - l_0}{2}$. Note that $l_0 \leq \sqrt{n(z)}$, so we can take $k_0 = \sqrt{\frac{\sqrt{n(z)} - l_0}{2}}$. Consider two cases:
\begin{itemize}
    \item If $l_0 = \sqrt{n(z)}$ then $k_0 = 0$. Furthermore, $l_1 = l_2 = l_3 = l_4 = 0$. Thus we can take any $k_1, k_2, k_3, k_4$ which satisfy $k_1^2 + k_2^2 + k_3^2 + k_4^2 = \sqrt{n(z)} = l_0$.
    \item If $l_0 < \sqrt{n(z)}$ then $k_0 > 0$. Hence we set $k_j = \frac{l_j}{2 k_0}$, $j = 1, \dots, 4$. Then 
    $$
    k_1^2 + k_2^2 + k_3^2 + k_4^2 = \frac{l_1^2 + l_2^2 + l_3^2 + l_4^2}{4 k_0^2} = \frac{n(z) - l_0^2}{4 k_0^2}.
    $$
    Thus we have 
    \begin{multline*}
    k_1^2 + k_2^2 + k_3^2 + k_4^2 - k_0^2 = \frac{n(z) - l_0^2 - 4k_0^4}{4 k_0^2} =\\
    = \frac{n(z) - l_0^2 - (\sqrt{n(z)} - l_0)^2}{2 (\sqrt{n(z)} - l_0)} = \frac{2 l_0 (\sqrt{n(z)} - l_0)}{2 (\sqrt{n(z)} - l_0)} = l_0.
    \end{multline*}
\end{itemize}
\end{proof}

\begin{proposition}
$\Gamma_C^Z(\mathbb{S})$ is connected, and its diameter is at most $4$.
\end{proposition}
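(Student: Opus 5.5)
Every vertex of $\Gamma_C^Z(\mathbb{S})$ commutes with its imaginary part, and the line through that imaginary part is again a vertex (it is a zero divisor). So it suffices to show that any two zero divisors $u=(a,b)$ and $v=(a',b')$, normalized so that all components have norm $1$, are joined by a path of length at most $2$ in $\Gamma_C^Z(\mathbb{S})$. Since $\mathbb{R}+x$ and $\Im(x)$ commute, a general vertex $x$ is then adjacent to or equal to $\Im(x)$ (they coincide as lines when $x$ is pure). A path between two arbitrary vertices therefore has the form $x - \Im(x) - w - \Im(y) - y$, of length at most $4$. Connectivity follows at once.

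To find the middle vertex $w$, I use Lemma~\ref{lemma:centralizer-product}. By Lemma~\ref{lemma:A_n-commutativity-through-orthogonality}\eqref{item:norm-nonzero}, $C_{\mathbb{S}}(u)\cap C_{\mathbb{S}}(v)\supseteq \mathbb{R}\oplus(\Im C_{\mathbb{S}}(u)\cap \Im C_{\mathbb{S}}(v))$. Here $\Im C_{\mathbb{S}}(u)=\mathbb{R}u\oplus O_{\mathbb{S}}(u)$ is a $5$-dimensional subspace of the $15$-dimensional space $\Im(\mathbb{S})$, and likewise for $v$. A dimension count alone gives nothing, since $5+5<15$, so I would use more structure. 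I want a pure $w\neq 0$ with $w\in \Im C_{\mathbb{S}}(u)\cap\Im C_{\mathbb{S}}(v)$ that is itself a zero divisor, and whose line differs from both $u$ and $v$ (or equals one of them, giving a shorter path).

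The natural candidates are zero divisors $w=(x,y)$ orthogonal to both $u$ and $v$, i.e.\ common neighbours in $\Gamma_O(\mathbb{S})$. Such $w$ lie in the component of $\Gamma_O(\mathbb{S})$ with line $\Lin(xy)$. By Corollary~\ref{corollary:sedenions-path} that line must equal both $\Lin(ab)$ and $\Lin(a'b')$, so this only works when $u,v$ are in one orthogonality component. For the general case I therefore use the larger centralizer and look for $w=(x,y)\in \Im C_{\mathbb{S}}(u)$ with $w\in\Im C_{\mathbb{S}}(v)$ as well. By Lemma~\ref{lemma:centralizer-product}, the products $xy$ realized by $(x,y)\in\Im C_{\mathbb{S}}(u)$ are exactly the $z\perp\Lin(1,a,b)$, and similarly for $v$ with $z\perp \Lin(1,a',b')$. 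The plan is to pick $z\in\Im(\mathbb{O})$ orthogonal to $a,b,a',b'$, which is possible since $\dim\Im(\mathbb{O})=7>4$, and then show that some zero divisor with product line $\Lin(z)$ lies in both centralizers. I would try to build it inside the component $\psi^{-1}(\Lin z)$ of Theorem~\ref{theorem:component-octonion}. That component has vertex set $\Lambda$, whose span is $6$-dimensional in each sign part, and I would intersect $\Lambda$ with $\Im C_{\mathbb{S}}(u)\cap\Im C_{\mathbb{S}}(v)$.

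The main obstacle is this last step: proving that the intersection of the two $5$-dimensional centralizer spaces contains a genuine zero divisor, not merely a nonzero pure element. Since the statement only claims diameter at most $4$ (and not $3$), I would allow myself an extra step if needed. Concretely, I would first pass from $u$ to some $u'\in\Im C_{\mathbb{S}}(u)$ that is a zero divisor with $u'$ chosen (via Lemma~\ref{lemma:centralizer-product}) to have product $\pm ab'$-compatible direction. I would then connect $u'$ to $v$ through a common orthogonal neighbour in the same $\Gamma_O$ component, using Lemma~\ref{lemma:components} and Proposition~\ref{proposition:hexagon-basis}. The key check is that elements of $\Im C_{\mathbb{S}}(u)$ of the form $k_0u+o$ with $o\in O_{\mathbb{S}}(u)$ are zero divisors exactly when the Biss--Dugger--Isaksen conditions of Lemma~\ref{lemma:sedenions-zero-divisor-pairs} hold. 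Using the explicit formulas for $x,y,xy$ in the proof of Lemma~\ref{lemma:centralizer-product}, these conditions reduce to $n(x)=n(y)$ and $\langle x,y\rangle=0$, which can be verified from those formulas. I expect the careful bookkeeping of path length, to keep the total at $4$, to be the delicate part.
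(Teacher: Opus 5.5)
There is a genuine gap. Your reduction needs every two pure zero divisors $u,v$ to be at distance at most $2$ in $\Gamma_C(\mathbb{S})$, and this is false. Take $u=(a,b)$ and $v=(b,a)$, where $(a,b)(c,d)=0$ and all four norms equal $1$. By Lemma~\ref{lemma:A_n-commutativity-through-orthogonality}\eqref{item:norm-nonzero} and the double hexagon, $\Im C_{\mathbb{S}}(u)=\Lin((a,b),(c,d),(d,-c),(ad,ac),(ac,-ad))$ and $\Im C_{\mathbb{S}}(v)=\Lin((b,a),(d,c),(c,-d),(ac,ad),(ad,-ac))$. These ten vertices of the double hexagon are linearly independent by Theorem~\ref{theorem:double-hexagon}\eqref{item:linear-independence}. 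Hence $C_{\mathbb{S}}(u)\cap C_{\mathbb{S}}(v)=\mathbb{R}$ and $d_{\Gamma_C}(u,v)\ge 3$. So the step you call the main obstacle is impossible, not just delicate: $\Im C_{\mathbb{S}}(u)\cap\Im C_{\mathbb{S}}(v)$ can be $\{0\}$, and no choice of $z$ changes that.

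Your fallback does not repair this. An element $u'=(x',y')\in\Im C_{\mathbb{S}}(u)$ in the $\Gamma_O$-component of $v$ must have $x'y'\in\Lin(a'b')$. By Lemma~\ref{lemma:centralizer-product} this requires $a'b'\perp\Lin(1,a,b)$, which fails for $u=(e_1,e_2)$, $v=(e_3,e_2)$, where $a'b'=-e_1$. Moreover, you have already spent the edges $x-\Im(x)$ and $\Im(y)-y$, so a middle segment of length $3$ would give total length $5$, not $4$.

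Two changes fix the argument, and together they give the paper's proof. First, the two end edges are unnecessary. Since $C_{\mathbb{S}}(r+u)=C_{\mathbb{S}}(u)$ for $r\in\mathbb{R}$, any path leaving $u$ can leave $r+u$ instead, so the whole budget of $4$ is available between pure $u$ and $v$.

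Second, use your $z$ on each side separately instead of looking for one element in both centralizers. Take $z\neq 0$ with $z\perp\Lin(1,a,b,a',b')$. Lemma~\ref{lemma:centralizer-product} gives $(c,d)\in\Im C_{\mathbb{S}}(u)$ and $(c',d')\in\Im C_{\mathbb{S}}(v)$ with $cd=c'd'=z$. Both are zero divisors, by Corollary~\ref{corollary:small-commutativity-component} or by your Biss--Dugger--Isaksen check. Both lie in the component $\psi^{-1}(\Lin(z))$ of $\Gamma_O(\mathbb{S})$, whose diameter is $3$. Since $n(c)=n(d)=n(c')=n(d')=\sqrt{n(z)}$, normalizing by a common factor preserves $cd=c'd'$. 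Corollary~\ref{corollary:sedenions-path} then forces every path between $(c,d)$ and $(c',d')$ to have even length, so their $\Gamma_O$-distance is at most $2$. Orthogonal elements commute, so $u-(c,d)-\cdots-(c',d')-v$ is a path of length at most $4$ inside $\Gamma_C^Z(\mathbb{S})$.
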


\begin{proof}
Let $(a,b), (a',b') \in Z(\mathbb{S})$. Consider some $z \perp \Lin(1,a,b,a',b')$, $z \neq 0$. By Lemma~\ref{lemma:centralizer-product}, there exist $(c,d) \in \Im(C_{\mathbb{S}}((a,b)))$ and $(c',d') \in \Im(C_{\mathbb{S}}((a',b')))$ such that $cd = c'd' = z$. It follows from Corollary~\ref{corollary:small-commutativity-component} that $(c,d), (c',d') \in Z(\mathbb{S})$. By Theorem~\ref{theorem:component-octonion}, $(c,d)$ and $(c',d')$ belong to the same connected component of $\Gamma_O(\mathbb{S})$. Theorem~\ref{theorem:sedenions-orthogonality} implies that $d_{\Gamma_O(\mathbb{S})}((c,d),(c',d')) \leq 3$. However, it follows from Corollary~\ref{corollary:sedenions-path} that any path between $(c,d)$ and $(c',d')$ is of even length. Hence $d_{\Gamma_O(\mathbb{S})}((c,d),(c',d')) \leq 2$. Any path in $\Gamma_O(\mathbb{S})$ is also a path in $\Gamma_C(\mathbb{S})$, so $d_{\Gamma_C(\mathbb{S})}((c,d),(c',d')) \leq 2$. Then $d_{\Gamma_C(\mathbb{S})}((a,b),(a',b')) \leq 4$.
\end{proof}

\begin{proposition}
The diameter of $\Gamma_C^Z(\mathbb{S})$ is at least $3$.
\end{proposition}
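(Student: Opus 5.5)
The plan is to exhibit two vertices of $\Gamma_C^Z(\mathbb{S})$ that are neither adjacent nor have a common neighbour. I would take $x=(a,b)$ and $x'=(b,a)$, where $(a,b)(c,d)=0$ and $n(a)=n(b)=n(c)=n(d)=1$. By Lemma~\ref{lemma:sedenions-zero-divisor-pairs}, both are zero divisors, hence both are vertices of $\Gamma_C^Z(\mathbb{S})$. In the notation of Theorem~\ref{theorem:neat-sedenions-basis}, $x=f_1$ and $x'=\til{f_6}$.

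First I would compute the centralizers. By Lemma~\ref{lemma:A_n-commutativity-through-orthogonality}\eqref{item:norm-nonzero}, for a pure zero divisor $u$ we have $C_{\mathbb{S}}(u)=\mathbb{R}\oplus\mathbb{R}u\oplus O_{\mathbb{S}}(u)$. Proposition~\ref{proposition:hexagon-basis} gives
$$\Im(C_{\mathbb{S}}(f_1))=\Lin(f_1,f_5,\til{f_5},f_4,\til{f_4}),$$
since $(c,d)=f_5$, $(d,-c)=-\til{f_5}$, $(ac,-ad)=f_4$ and $(ad,ac)=\til{f_4}$.

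For $\til{f_6}=(b,a)$, Table~\ref{table:sedenions-mult} shows that $\til{f_6}$ annihilates $f_2,f_3,\til{f_2},\til{f_3}$ on both sides. These four elements are linearly independent, and $O_{\mathbb{S}}(\til{f_6})$ is $4$-dimensional by \cite[p.~25]{moreno}, so $O_{\mathbb{S}}(\til{f_6})=\Lin(f_2,f_3,\til{f_2},\til{f_3})$. Hence
$$\Im(C_{\mathbb{S}}(\til{f_6}))=\Lin(\til{f_6},f_2,f_3,\til{f_2},\til{f_3}).$$

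The ten vectors $f_1,f_4,f_5,\til{f_4},\til{f_5},\til{f_6},f_2,f_3,\til{f_2},\til{f_3}$ are distinct members of the orthogonal basis $F$, so these two $5$-dimensional spaces intersect trivially. This gives the two required facts.

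\emph{Non-adjacency.} The element $\til{f_6}$ does not lie in $C_{\mathbb{S}}(f_1)=\mathbb{R}\oplus\Im(C_{\mathbb{S}}(f_1))$, so $x$ and $x'$ are not adjacent.

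\emph{No common neighbour.} Suppose a vertex $w\notin\mathbb{R}$ commutes with both $x$ and $x'$. Then $\Im(w)\neq 0$, since $\Im(w)=0$ would force $w\in\mathbb{R}$. Moreover, $w$ commutes with $x$ if and only if $\Im(w)$ does, and similarly for $x'$. So $\Im(w)$ lies in $\Im(C_{\mathbb{S}}(f_1))\cap\Im(C_{\mathbb{S}}(\til{f_6}))=0$, which is a contradiction. This holds for any vertex $w$ of $\Gamma_C(\mathbb{S})$, not only those in $\Gamma_C^Z(\mathbb{S})$.

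Therefore $d(x,x')\geq 3$, so the diameter of $\Gamma_C^Z(\mathbb{S})$ is at least $3$.

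The only step needing care is identifying $O_{\mathbb{S}}((b,a))$. I would read it off the rows of Table~\ref{table:sedenions-mult} as described above. As a check, the zero products can be compared with the double hexagon via Theorem~\ref{theorem:double-hexagon}, and the dimension count from \cite{moreno} then closes the argument.
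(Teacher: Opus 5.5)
Your proof is correct and follows essentially the same route as the paper. Both take the pair $(a,b)$ and $(b,a)$, write their centralizers as $\mathbb{R}\oplus\mathbb{R}u\oplus O_{\mathbb{S}}(u)$ via Lemma~\ref{lemma:A_n-commutativity-through-orthogonality}\eqref{item:norm-nonzero}, and conclude from linear independence that $C_{\mathbb{S}}((a,b))\cap C_{\mathbb{S}}((b,a))=\mathbb{R}$. The only cosmetic difference is that you read off $O_{\mathbb{S}}((b,a))$ from Table~\ref{table:sedenions-mult} and use the orthogonal basis $F$, whereas the paper reads it off the double hexagon and invokes Theorem~\ref{theorem:double-hexagon}\eqref{item:linear-independence}.
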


\begin{proof}
Consider some $(a,b),(c,d) \in Z(\mathbb{S})$, $(a,b)(c,d) = 0$, $n(a) = n(b) = n(c) = n(d) = 1$. Then it follows from Figure~\ref{figure:double-hexagon} that 
\begin{align*}
O_{\mathbb{S}}((a,b)) &= \Lin((c,d), (d,-c), (ad,ac), (ac,-ad)),\\
O_{\mathbb{S}}((b,a)) &= \Lin((d,c), (c,-d), (ac,ad), (ad,-ac)).
\end{align*}
By Lemma~\ref{lemma:A_n-commutativity-through-orthogonality},
\begin{align*}
C_{\mathbb{S}}((a,b)) &= \Lin(1,(a,b)) \oplus O_{\mathbb{S}}((a,b)) = \Lin(1, (a,b), (c,d), (d,-c), (ad,ac), (ac,-ad)),\\
C_{\mathbb{S}}((b,a)) &= \Lin(1,(b,a)) \oplus O_{\mathbb{S}}((b,a)) = \Lin(1, (b,a), (d,c), (c,-d), (ac,ad), (ad,-ac)).
\end{align*}
Then it follows from Theorem~\ref{theorem:double-hexagon}\eqref{item:linear-independence} that $C_{\mathbb{S}}((a,b)) \cap C_{\mathbb{S}}((b,a)) = \mathbb{R}$, so $d_{\Gamma_C(\mathbb{S})}((a,b),(b,a)) \geq 3$.
\end{proof}

We have considered many examples by using Wolfram Mathematica, and in all cases we were able to find a path of length $3$ between two zero divisors. This leads us to the following conjecture.

\begin{conjecture}
The diameter of $\Gamma_C^Z(\mathbb{S})$ equals $3$.
\end{conjecture}

\end{document}